\definecolor{verylight}{gray}{.85}
\definecolor{light}{gray}{.75}
\definecolor{lightish}{gray}{.6}
\definecolor{medium}{gray}{.5}
\definecolor{dark}{gray}{.25}
\definecolor{kindofdark}{gray}{.4}
\definecolor{posetshade}{gray}{.85} 
\tikzset{->-/.style={decoration={markings, mark=at position #1 with {\arrow{>}}},postaction={decorate}}}
\newtheorem{theo}{Theorem}
\newtheorem{prop}{Proposition}[section]
\newtheorem{coro}[prop]{Corollary}
\newtheorem{lemma}[prop]{Lemma}
\newtheorem{definition}[prop]{Definition}
\theoremstyle{definition}
\newtheorem{example}[prop]{Example}
\newtheorem{remark}[prop]{Remark}
\theoremstyle{remark}
\newcommand{\basetiling}{\oplus}
\newcommand{\interior}{\operatorname{int}}
\newcommand{\Flux}{\operatorname{Flux}}
\newcommand{\NN}{{\mathbb{N}}}
\newcommand{\ZZ}{{\mathbb{Z}}}
\newcommand{\RR}{{\mathbb{R}}}
\newcommand{\Ss}{{\mathbb{S}}}
\newcommand{\cT}{{\cal T}}
\newcommand{\cL}{{\cal L}}
\newcommand{\cG}{{\cal G}}
\newcommand{\Tw}{\operatorname{Tw}}
\newcommand{\TW}{\operatorname{TW}}
\newcommand{\ccolor}{\operatorname{color}} 
\newcommand{\wind}{\operatorname{wind}}
\newcommand{\Link}{\operatorname{Link}}
\date{\today}
\begin{document}
\title{On the connectivity of spaces of three-dimensional domino tilings}
\author{Juliana Freire \and Caroline Klivans \\ \and Pedro H. Milet \and Nicolau C. Saldanha}

\maketitle

\begin{abstract}
We consider domino tilings of three-dimensional cubiculated manifolds
with or without boundary,
including subsets of Euclidean space and three-dimensional tori.
In particular, we are interested
in the connected components of the space of tilings of such regions
under local moves.
Building on the work of the third and fourth authors~\cite{segundoartigo},
we allow two possible local moves, the \emph{flip} and
\emph{trit}. 
These moves are considered with respect to two topological
invariants, the \emph{twist} and \emph{flux}.

Our main result proves that, up to refinement, \\
$\bullet$  Two tilings are connected by flips and trits
if and only if they have the same flux. \\
$\bullet$   Two tilings are connected by flips alone 
if and only if  they have the same flux and twist.
\end{abstract}

\section{Introduction}

\footnotetext{2010 {\em Mathematics Subject Classification}.
Primary 05B45; Secondary 52C20, 52C22, 05C70.
{\em Keywords and phrases} Three-dimensional tilings,
dominoes,
dimers,
flip accessibility,
connectivity by local moves}

Tiling problems have received much attention in the second half of the
twentieth century: two-dimensional domino and lozenge tilings in particular,
due to their connection to the dimer model and to matchings in a graph. A large
number of techniques have been developed for solving various problems in two
dimensions. For instance, Kasteleyn~\cite{Kasteleyn19611209},
Conway and Lagarias~\cite{conway1990tiling}, Thurston~\cite{thurston1990}, Cohn, Elkies, Jockush, Kuperberg, Larsen, Propp and Shor~\cite{jockusch1998random,cohn1996local,elkies1992alternating}, and Kenyon,
Okounkov and Sheffield~\cite{kenyonokounkov2006dimers,kenyonokounkov2006planar} have used
 very interesting techniques, ranging from abstract algebra to probability.

A number of generalizations of these techniques have been made to the
three-dimensional case.  Randall and Yngve \cite{randall2000random}
considered tilings of ``Aztec'' octahedral and tetrahedral regions
with triangular prisms, which generalize domino tilings to three
dimensions. 
Linde, Moore and Nordahl
\cite{linde2001rhombus} considered families of tilings that generalize
rhombus (or lozenge) tilings to arbitrary dimensions. 
 Bodini \cite{bodini2007tiling} considered tiling problems of
pyramidal polycubes.  And, while the problem of counting domino tilings is known
to be computationally hard (see \cite{pak2013complexity}), some
asymptotic results, including for higher dimensions, date as far back as
1966 (see \cite{hammersley1966limit,ciucu1998improved,friedland2005theory}).

Most relevant to the discussion in this paper are the problems of
connectivity of the space of tilings under local moves.  A \emph{flip}
is the simplest local move: remove two adjacent parallel dominoes and
place them back in the only possible different position.  In two
dimensions, any two tilings of a simply connected region are flip
connected; see e.g., \cite{thurston1990,saldanhatomei1995spaces}.
This is no longer the case when one considers tilings in three dimensions.

 Even for simple three-dimensional regions, the space of tilings is no longer connected by
flips.  This is perhaps not surprising as the flip is inherently a
 two-dimensional move. 
The \emph{trit} is a three-dimensional local
move, which lifts three dominoes sitting in a $2 \times 2 \times 2$
cube, no two of which are parallel, and places them back in the only
other possible configuration (see Figure~\ref{fig:postrit}).  It is
natural to ask if these two moves, the flip and trit are enough to
connect all tilings of three-dimensional spaces.  In general, the
answer is again no.
Here we consider connectivity of tilings
taking into account two topological invariants. 
In doing so, we are able to characterize (up to refinement)
when two tilings are connected by flips or flips and trits.

The first invariant is the \emph{Flux} of a tiling. 
The flux of a tiling of a region $R$ takes values in the first homology group
$H_1(R;\ZZ)$. 
The second invariant is the \emph{twist} of a tiling. 
The twist assumes values either in $\ZZ$ or in $\ZZ/m\ZZ$
where $m$ is a positive integer depending on
the value of the Flux.
For contractible regions (such as boxes),
the twist assumes values in $\ZZ$.
If $R$ is a torus of the form $\ZZ^3 / \mathcal{L}$, where
$\mathcal{L}$ is spanned by $(a,0,0),(0,b,0),(0,0,c)$
(with $a,b,c$ even positive integers)
then the twist assumes values in $\ZZ$ if Flux is $0$
and in some $\ZZ/m\ZZ$ otherwise.
The twist was first introduced by
Milet and Saldanha~\cite{segundoartigo, primeiroartigo} for particularly
nice regions.  In that context, the twist has a simple combinatorial
definition.  Unfortunately, it does not extend to the more general
tiling domains considered here.

Our new definition of twist, and our introduction of Flux 
relies on the construction of auxiliary
surfaces.  The difficulty is that the required surfaces may not always exist.
The difficulty is addressed by using the concept of
\emph{refinement}.  A region $R$ is refined by decomposing each cube
 of $R$ into $5 \times 5 \times 5$ smaller cubes.
Refinement
guarantees the existence of auxiliary surfaces which, borrowing from
knot theory, we call \emph{Seifert surfaces}.

Informally, Flux measures how a tiling flows across a surface
boundary. 
 If two tilings are flip and trit connected, they must have
equal Flux.  The twist measures how ``twisted'' a tiling is by trits:
under a trit move the twist changes by exactly one. 
A key property is that if two tilings 
are in the same flip connected component,
then they must have equal twist. 
The converse is false in general.

The twist can also be interpreted as a discrete analogue of
\emph{helicity} arising in fluid mechanics and topological
hydrodynamics, see
e.g.~\cite{moffatt1969degree, arnold1999topological, khesin2005topological};
the authors thank Yuliy Baryshnikov for bringing this concept to our attention. 
The helicity of a vector field on a domain in
$\mathbb{R}^3$ is a measure of the self linkage of field lines.
An important recent result shows that helicity
is the only integral invariant of
volume-preserving transformations~\cite{helicityPNAS}.

Our main result is a characterization
of the connectedness of three-dimensional tilings
by flips and trits with respect to flux and twist.

\begin{theo}
\label{theo:main}
Consider a cubiculated region $R$ and two tilings $t_0$ and $t_1$ of $R$.
\begin{enumerate}
\item[(a)]{There exists 
a sequence of flips and trits taking a refinement of $t_0$
to a refinement of $t_1$
if and only if $\Flux(t_0) = \Flux(t_1)$.}
\item[(b)]{There exists 
a sequence of flips taking a refinement of $t_0$ to a refinement of $t_1$
if and only if $\Flux(t_0) = \Flux(t_1)$ and $\Tw(t_0) = \Tw(t_1)$.}
  
\end{enumerate}
\end{theo}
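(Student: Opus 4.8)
The plan is to prove both directions of both statements, with the "only if" directions being the (comparatively) easy part and the "if" directions requiring the main technical work.

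\medskip

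\textbf{Necessity ("only if").} For part (a), I would first establish that $\Flux$ is invariant under flips and trits and unchanged by refinement. A flip is a purely local, two-dimensional move, so it is geometrically clear that it does not change how the tiling flows across any surface not touching the two relevant dominoes; since $\Flux$ lives in $H_1(R;\ZZ)$ and can be computed from any Seifert surface, one checks that moving the surface off the support of the flip shows invariance. For a trit one does the same with a surface avoiding the $2\times2\times2$ cube. Refinement invariance is a matter of showing the $5\times5\times5$ subdivision carries a canonical tiling on each cube compatible with any given tiling, under which $\Flux$ is preserved — this should be a direct computation with the definitions. For part (b), in addition one needs that $\Tw$ is a flip invariant (stated informally in the introduction: flips preserve twist) and behaves well under refinement; again I expect this to follow from the surface-based definition once one verifies the auxiliary Seifert surface can be chosen to avoid the flip's support.

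\medskip

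\textbf{Sufficiency ("if"), part (a).} Assume $\Flux(t_0)=\Flux(t_1)$. The strategy is to reduce to a normal form. After refining both tilings enough to guarantee existence of Seifert surfaces, I would: (i) use flips and trits to bring each $t_i$ into a standard configuration determined only by its flux — for instance a tiling that looks like a fixed ``brick-wall'' pattern away from a thin tube carrying the flux homology class; (ii) show any two standard configurations with the same flux class are themselves flip-and-trit connected (after further refinement), by an explicit isotopy of the flux tube. The key tool here is presumably a result from \cite{segundoartigo} or its generalization: in a contractible region, flips and trits connect everything (this is the flux$=0$, twist-arbitrary case, since trits change twist by one). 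Then the general region is handled by cutting along surfaces realizing a basis of $H_1$, reducing to the contractible case plus bookkeeping of the homology class, which is exactly what the flux records.

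\medskip

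\textbf{Sufficiency ("if"), part (b) — the main obstacle.} Assume both invariants agree. By part (a) there is a sequence of flips and trits connecting (refinements of) $t_0$ and $t_1$; the problem is to remove the trits. Since each trit changes $\Tw$ by $\pm1$ and the net change is zero, the trits come in ``cancelling pairs'' in the appropriate sense, but a pair of trits far apart along the sequence cannot simply be deleted. The hard part will be showing that a trit can be \emph{traded} for flips after sufficient refinement: concretely, that performing a trit and then performing the ``opposite'' trit somewhere else is equivalent, up to flips and refinement, to doing nothing — i.e.\ that a twist-neutral local modification is in the flip-connected component. This is the three-dimensional analogue of the hardest step in \cite{segundoartigo}, and I would expect it to require: building an explicit flip path that drags the effect of one trit across the region to annihilate another (a ``finger move'' of tilings), controlled by the twist being the complete obstruction on contractible pieces; and then globalizing via the same cut-along-Seifert-surfaces decomposition as in part (a), now keeping track of twist rather than just flux. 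The role of refinement throughout is to create enough room for these isotopies and for the auxiliary surfaces to exist and be moved; quantifying ``enough'' refinement is likely where most of the technical care goes.
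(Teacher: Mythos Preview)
Your ``only if'' sketch is essentially fine and matches the paper's easy direction. The ``if'' direction, however, misses the paper's central mechanism and has the logical dependency between (a) and (b) reversed.

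The paper proves (b) first, then derives (a) from (b) trivially: if $\Flux(t_0)=\Flux(t_1)$, refine until there are many $3\times3\times2$ bricks, apply trits inside them to shift $\Tw(t_0^{(k)})$ until it matches $\Tw(t_1^{(k)})$, then invoke (b). Your plan does the opposite --- prove (a) by normal forms, then for (b) take the flip-and-trit path from (a) and cancel trits in pairs. That cancellation step is the gap. Showing that a positive trit at one location and a negative trit at another are jointly flip-removable is, up to bookkeeping, a special case of (b) itself; appealing to \cite{segundoartigo} on contractible pieces does not close the loop, since that paper handles only particular regions and the general cubiculated case is exactly what is being established here.

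What the paper actually does for (b) is a reduction to dimension two. Given $\Flux(t_0)=\Flux(t_1)$ and $\Tw(t_0)=\Tw(t_1)$, one constructs (after refinement) a single discrete Seifert surface $S$ with $\partial S = t_1 - t_0$, then upgrades $S$ through a chain of properties --- \emph{zero-flux} (here the twist hypothesis enters, by adding closed surfaces to kill $\phi(t_i;S)$), then \emph{balanced} (adjusting black/white vertex counts by local bumps), then \emph{tangent} (both $t_i$ lie in $S$). Once $S$ is tangent, $t_0$ and $t_1$ restrict to tilings of the two-dimensional surface $S$ with equal $\Flux_S$, and a height-function argument (Theorem~\ref{theo:coquadriculated}) shows they are flip-connected. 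This Seifert-surface-plus-height-function machine is the engine of the proof and does not appear in your proposal.
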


In general, the refinement condition is necessary
in the statement of the theorem.
However, it is not known if the refinement condition
may be dropped in certain special cases.
For nice regions (such as boxes) there is empirical evidence,
see~\cite{segundoartigo, saldanha2021}, 
that refinement is almost never necessary;
for item (a) it may never be necessary.

Section~\ref{sec:prelim} contains preliminaries
for the regions we will consider. 
The two local moves, the flip and trit,
are introduced in Section~\ref{sec:local}. 
Section~\ref{sec:Flux} introduces the flux.
In Sections~\ref{sec:surface} and~\ref{sec:fluxthroughsurfaces}
we work heavily with discrete surfaces
leading to the definition of the twist in Section~\ref{sec:twist}. 
Section~\ref{sec:coquadriculated} extends
the concept of height functions to our setting
where they are better described as height forms. 
Theorem~\ref{theo:main} is proved in Section~\ref{sec:proof}. 
We end with a discussion of further questions and conjectures
concerning three-dimensional tilings.

The authors are thankful for the generous support of
CNPq, CAPES, FAPERJ and a grant from the  Brown-Brazil initiative.  
They would also like to thank the referee 
for several helpful and insightful comments.

\section{Preliminaries}
\label{sec:prelim}
\subsection{Cubiculated Regions}
\label{sec:regions}

In this paper, we  consider tilings of certain three-dimensional regions. 
By a \emph{cubiculated region} $R$, we will mean a cubical complex embedded as a finite polyhedron in $\RR^N$, for some $N$, 
which is also a connected oriented topological manifold of dimension three
with (possibly empty) boundary $\partial R$.
We assume that:
(i) interior edges of $R$ are surrounded by precisely four cubes; 
(ii) cubes are painted  {\it black} or  {\it white}
such that two adjacent cubes have opposite colors; and
(iii)  the number of black cubes equals the number of white cubes. 
It follows from this definition that $\partial R$ is also a polyhedron and
an oriented topological manifold of dimension two.

A  {\it domino}
is the union of two adjacent cubes in $R$
and a  {\it (domino) tiling} of $R$ is a collection of dominoes
with disjoint interior whose union is $R$.

\begin{example}
\label{example:firsttilingsbox}
A {\it box} is a cubiculated region  $[0,L]\times[0,M]\times[0,N]$,
where $L$, $M$ and $N$ are positive integers,
at least one of them even.
Figure \ref{fig:233} shows different representations 
of the $3\times 3\times 2$ box.
In one representation, a tiling is shown
as a pile of blocks in perspective:
this representation is very intuitive but not practical
except for very small examples.
The other representation is by floors:
we draw each floor separately (from bottom to top).
Horizontal dominoes are contained in a single floor
and are represented by rectangles.
Vertical dominoes appear on two adjacent floors as squares:
we mark the bottom half of a vertical domino by shading it.
\end{example}

\begin{figure}[ht]
\centering
\centerline{\includegraphics[scale=0.25]{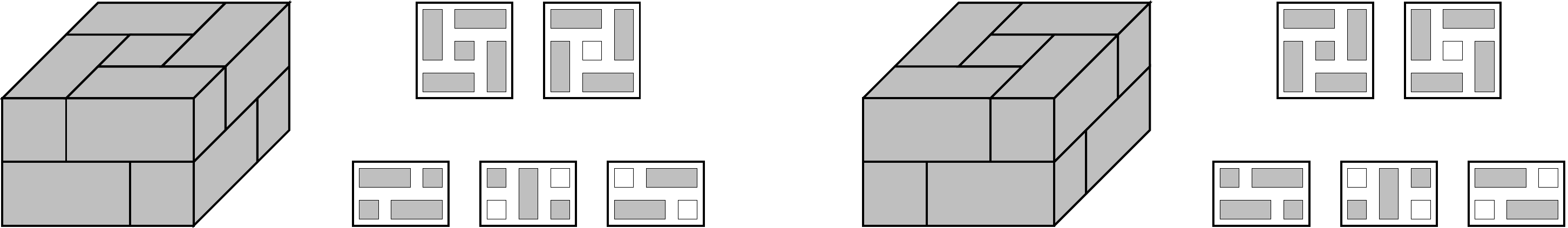}}
\caption{The two tilings of a $3 \times 3 \times 2$ box
that have no possible flips. 
Each tiling is shown three times:
in 3D perspective,
by floors with a $3\times 3$ base
and again by floors but rotated,
so that now the base is a $3\times 2$ rectangle.
Note that the tilings differ only in that
the two floors have been swapped. }
\label{fig:233}
\end{figure}

\begin{example}
\label{example:firsttilingstorus}
Let $\cL \subset \ZZ^3$ be a three-dimensional lattice such that
$(x,y,z) \in \cL$ implies $x+y+z$ is even. 
The {\it torus}  $T = \RR^3/\cL$
is a cubiculated region without boundary.
Figure \ref{fig:666f0} shows the $6\times 6\times 6$ torus, i.e.,
$\cL$ is generated by $6e_1$, $6e_2$ and $6e_3$.
We also draw the tiling by floors.
Opposite sides should be identified.
\end{example}

\begin{figure}[ht]
\centerline{\includegraphics[scale=0.25]{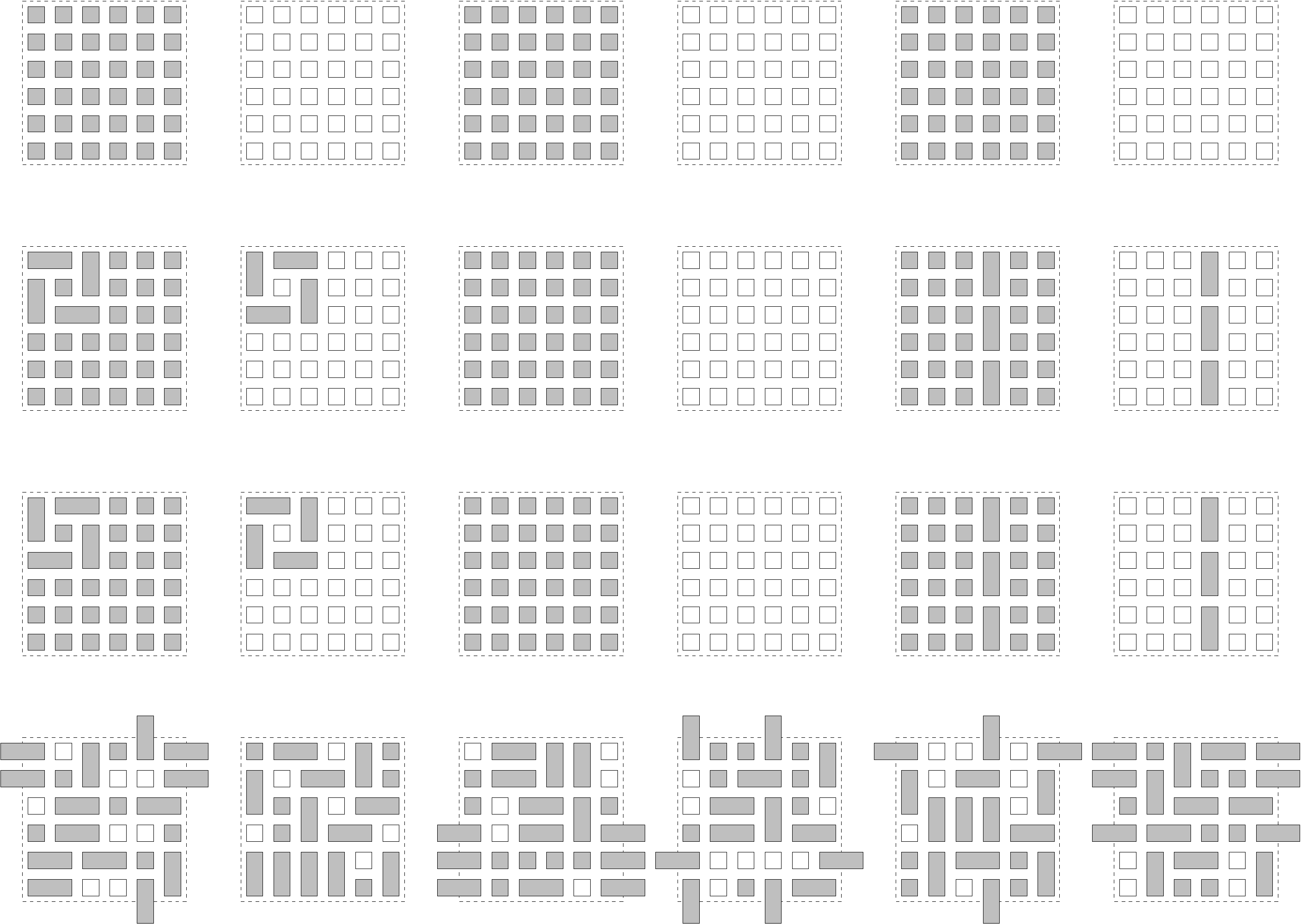}}
\caption{Four tilings of the $6\times 6\times 6$ torus,
one per row.
Looking forward to Section \ref{sec:Flux},
the tiling $t_\basetiling$ on the first row
is designated as the base tiling.
Notice that all dominoes in $t_{\basetiling}$
are vertical (in the direction $e_3$).}
\label{fig:666f0}
\end{figure}

We also consider the dual cubical complex $R^\ast \subset R$.
Vertices of $R^\ast$ are centers of cubes in $R$ and
edges of $R^\ast$ join centers of adjacent cubes in $R$.
There is a cube in $R^\ast$ around each interior vertex of $R$:
its eight vertices are the centers of the cubes in $R$ adjacent
to the interior vertex.
The dual cubical complex $R^\ast$ may or may not be a manifold,
for instance, there may exist edges of $R^\ast$ not adjacent
to any cube of $R^\ast$.
Figure \ref{fig:233ast} shows $R$ and $R^\ast$
for the $3\times 3\times 2$ box 
and two tilings in the dual cubical complex $R^\ast$.

\begin{figure}[ht]
\centerline{\includegraphics[scale=0.2]{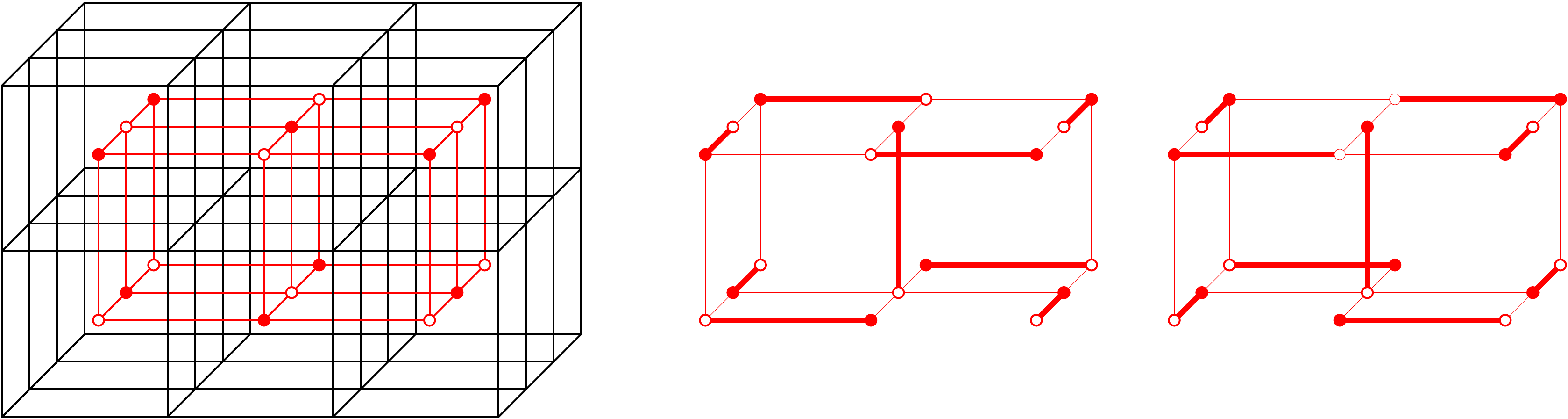}}
\caption{The $3\times 3\times 2$ box $R$ and its dual $R^\ast$:
notice that drawing both $R$ and $R^\ast$ quickly becomes impractical.
The two tilings in Figure \ref{fig:233} are shown in $R^\ast$
as matchings of $\cG(R^\ast)$.}
\label{fig:233ast}
\end{figure}

We will also work with the graph $\cG(R)$ and its dual $\cG(R^\ast)$. 
The vertices and edges of $\cG(R)$ are just the vertices and edges of $R$;
in other words, $\cG(R)$ is the $1$-skeleton of $R$. 
Similarly,
$\cG(R^\ast)$ is the $1$-skeleton of $R^\ast$ which is a bipartite graph.
A tiling of $R$ is equivalent to a matching of the graph $\cG(R^\ast)$.
When seen as an edge in $\cG(R^\ast)$, a domino is called a  {\it dimer}.
Tiling regions are often simply regarded as subgraphs of $\cG(R^\ast)$.
It is important to note that the regions we are working with here must
be topological manifolds,
therefore it does not suffice to consider
arbitrary subgraphs of the $\mathbb{Z}^3$ lattice.


\subsection{Cycles}

An important concept to have in mind throughout this paper is
the interpretation of the difference of two tilings as
a union of disjoint cycles. 

An  {\it embedded cycle} is an injective continuous map $\gamma: \Ss^1
\to R^\ast \subset R$ whose image is a union of vertices and edges of
$R^\ast$.  Thus, an embedded cycle is a cycle in the graph theoretical
sense for $\cG(R^\ast)$. 

 We will also consider cycles homologically as elements of
 $Z_1(R^\ast;\ZZ)$, the kernel of the boundary map from one to zero
 dimensional cells of $R^\ast$.
Similarly, since a dimer connects a pair of vertices of opposite color,
we may also think of dimers as oriented edges
pointing from the center of a white cube to the center of a black cube
(by convention), i.e., as generators of $C_1(R^\ast;\ZZ)$,
the one dimensional chain group.

With this point of view in mind, given two tilings $t_0$ and $t_1$,
we define $t_1 - t_0$ to be the union of the dimers in both tilings,
with the dimers in $t_0$ having their orientations reversed.
Hence, $t_1 - t_0$ is the union of disjoint cycles;
cycles of length $2$ in the graph theoretical sense
are called  {\it trivial cycles} and are usually ignored.
Figure \ref{fig:233cyc} shows a simple example.

\begin{figure}[ht]%
\centering
\centerline{\includegraphics[scale=0.333333]{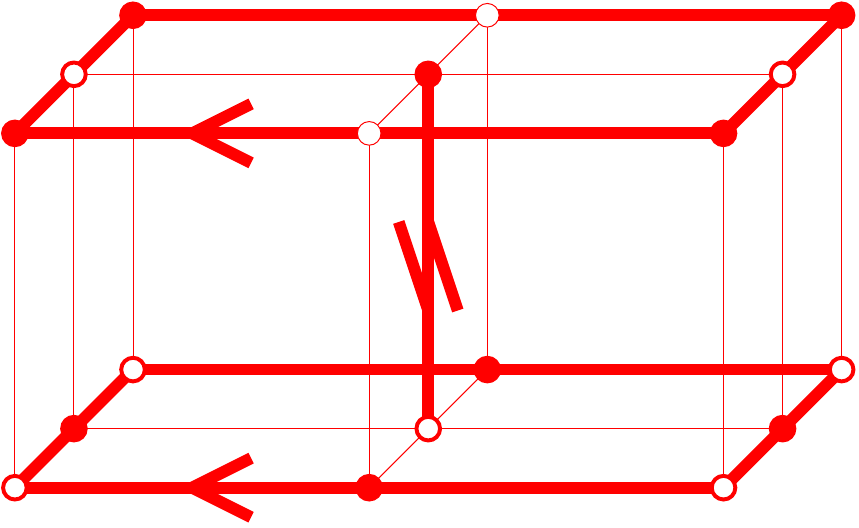}}
\caption{The two tilings in Figures \ref{fig:233} and \ref{fig:233ast},
now plotted together as a union of oriented dimers.
This yields three curves, one of which is trivial.}%
\label{fig:233cyc} 
\end{figure}

\subsection{Refinements}
\label{sec:refine}

A region $R$ is  {\it refined} by decomposing each cube of $R$
into $5 \times 5 \times 5$ smaller cubes;
the corners and the center are painted the same color as the original cube.
This defines a new cubiculated region $R'$.
We sometimes need to refine a region $R$ not once but $k$ times:
we then call the resulting region $R^{(k)}$.
As topological spaces, $R$ and $R'$ are equal.
A tiling $t$ is refined by decomposing each domino of $t$
into $5 \times 5 \times 5$ smaller dominoes,
each one parallel to the original domino.
Again, this defines a new tiling $t'$;
if we refine $k$ times we obtain $t^{(k)}$. 
For simplicity, in moving to the dual,
we write $R^{(k) \ast}$ instead of $(R^{(k)})^{\ast}$.  

Some comments on the choice of $5 \times 5 \times 5$ are in order. 
Dividing each cube into $2 \times 2 \times 2$
smaller cubes would erase the distinction between black and white
 and make the entire discussion trivial.
Dividing into $3\times 3\times 3$ smaller cubes works for our purposes but 
 the fact that the central cube in this small block
has the opposite color as the corner is a source of unnecessary confusion.
What we need, therefore,
is a positive integer greater than $1$
which is congruent to $1$ mod $4$;
$5$ being the smallest.

\section{Local Moves: Flips and Trits}
\label{sec:local}

A  {\it flip} is a move that takes a tiling $t_0$ into another tiling $t_1$
by removing two parallel dominoes that form a $2 \times 2 \times 1$ ``slab''
and placing them back in the only other position possible.
The  {\it flip connected component} of a tiling $t$ is the set of all
tilings that can be reached from $t$ after a sequence of flips.  For
instance, the $3 \times 3 \times 2$ box has $229$ total tilings, but
only three flip connected components. 
Two components contain just one tiling each;
i.e. there are no possible flip moves from these tilings,
see Figure~\ref{fig:233}.
The $4 \times 4 \times 2$ box admits $32,000$ tilings.
Under flips, it has $9$ connected components.
The largest one consists of all tilings with twist $0$
and has $31,484$ tilings.
There are $4$ components with $128$ tilings each 
and $4$ components consisting of a single tiling each.
Figure \ref{fig:442} shows two of these isolated tilings.

\begin{figure}[ht]
\centerline{\includegraphics[scale=0.2]{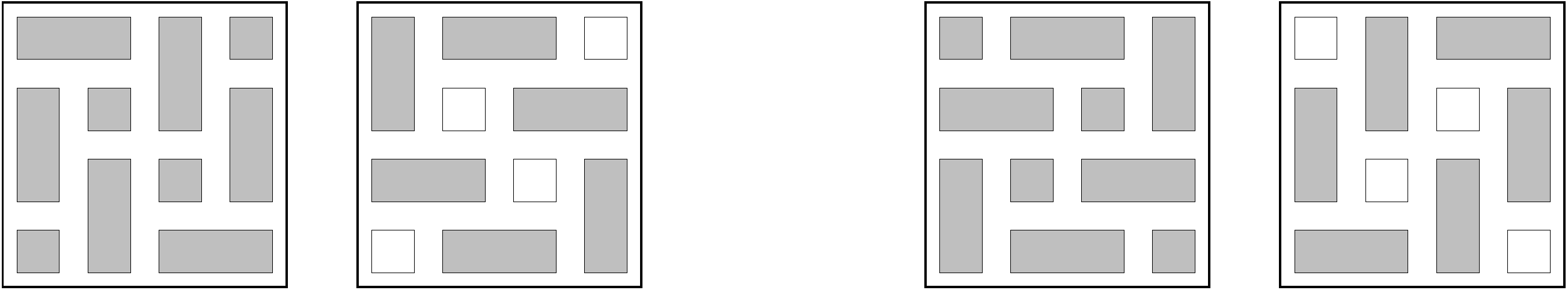}}
\caption{Two tilings $t_0$ and $t_1$ of the $4\times 4\times 2$ box.
Neither admits a flip. 
Their refinements are mutually accessible by flips.}
\label{fig:442}
\end{figure}

A  {\it trit} is a move involving three dominoes
which sit inside a $2 \times 2 \times 2$ cube
where each domino is parallel to a distinct axis. 
We thus necessarily have some rotation of Figure \ref{fig:postrit}.
The trit that takes the drawing at the left of Figure \ref{fig:postrit}
to the drawing at the right is a  {\it positive trit}.
The reverse move is a  {\it negative trit}.
Notice that the $2 \times 2 \times 2$ cube
need not be entirely contained in $R$.
On the other hand, not just the six cubes directly involved in the trit
but also at least one of the other two must be contained in $R$
otherwise $R$ is not a manifold.

\begin{figure}[ht]
\centering
\centerline{\includegraphics[width=2.5in]{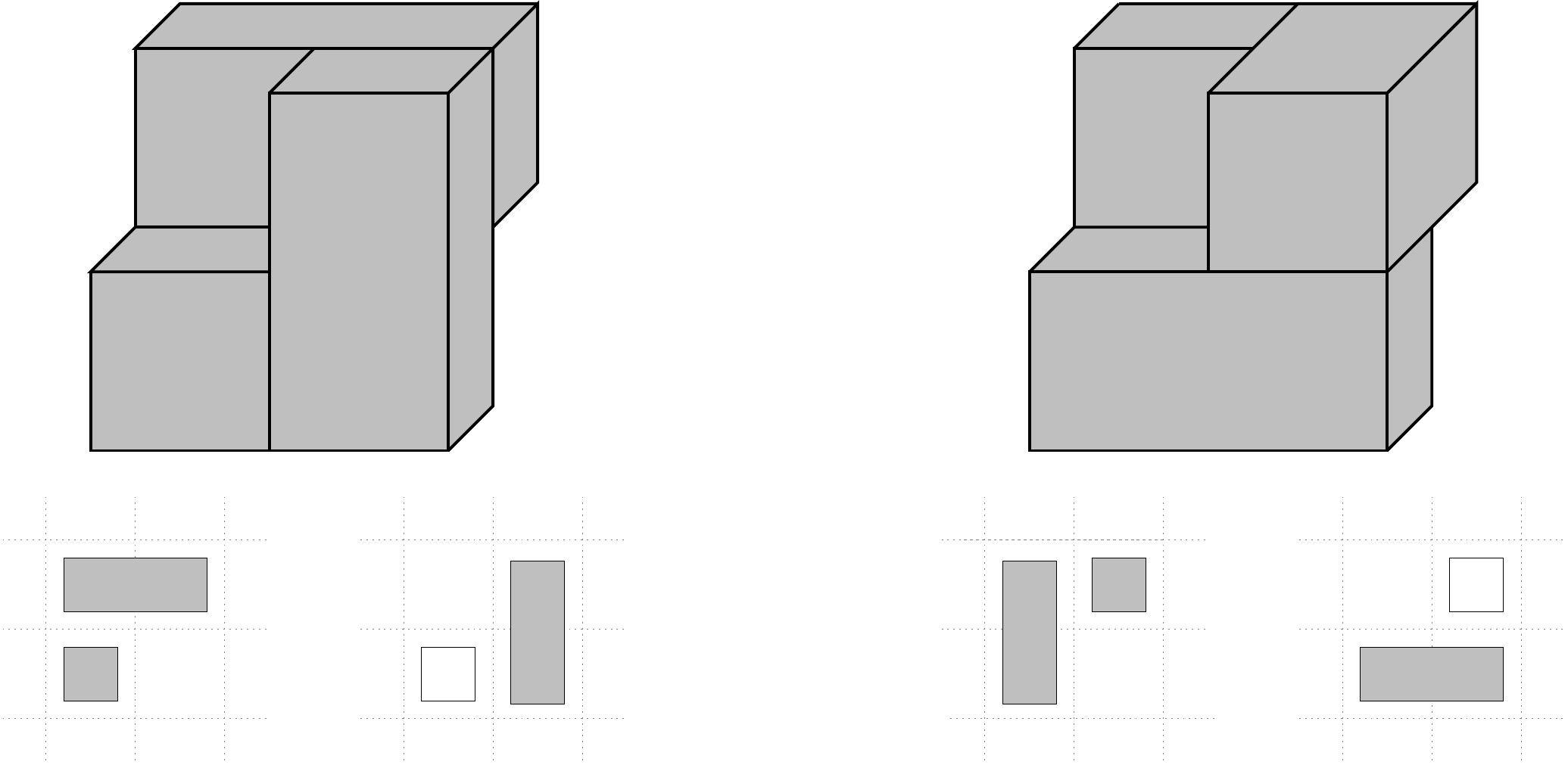}}
\caption{The anatomy of a positive trit (from left to right),
with drawings in perspective above floor diagrams.
The trit that takes the right drawing to the left one is a negative trit.
The empty corners  may represent either partial dimers that are not contained
in the $2 \times 2 \times 2$ cube or cubes that are not contained in the region (for instance, if the region happens not to be a box).}%
\label{fig:postrit}%
\end{figure}

Flips and trits behave well with respect to refinement.  
If $t_0$ and $t_1$ differ by a flip,
then their refinements differ by a sequence of $125$ flips.
If $t_0$ and $t_1$ differ by a trit,
then their refinements differ by a a trit and a sequence of flips. 
Therefore we have:

\begin{prop}
If $t_0$ and $t_1$
are connected by flips (resp. flips and trits)
then their refinements
are also connected by flips (resp. flips and trits).
\end{prop}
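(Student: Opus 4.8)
The plan is to reduce the statement to the two elementary facts asserted just before it: a single flip refines to $125$ flips, and a single trit refines to a trit together with a sequence of flips. Once those two local facts are in hand, the proposition follows by a straightforward induction on the length of the move sequence connecting $t_0$ and $t_1$, combined with the observation that refinement is functorial in the obvious sense (refining a tiling is well-defined regardless of how we arrived at it, and $R'$ is again a cubiculated region).

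\begin{proof}
Suppose $t_0$ and $t_1$ are connected by a sequence of flips and trits
$t_0 = s_0, s_1, \ldots, s_n = t_1$, where each $s_{i+1}$ is obtained from $s_i$ by a single flip or a single trit. Refining each $s_i$ yields tilings $s_i'$ of $R'$. By the two facts recalled above, if $s_{i+1}$ differs from $s_i$ by a flip then $s_{i+1}'$ differs from $s_i'$ by $125$ flips, and if $s_{i+1}$ differs from $s_i$ by a trit then $s_{i+1}'$ differs from $s_i'$ by one trit and finitely many flips. Concatenating these sequences for $i = 0, \ldots, n-1$ produces a sequence of flips and trits connecting $s_0' = t_0'$ to $s_n' = t_1'$. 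If moreover the original sequence used flips alone, then so does the resulting sequence, since a refined flip consists only of flips. This proves the claim for a single refinement; for $R^{(k)}$ one simply iterates, applying the single-refinement statement $k$ times.
\end{proof}

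The proof is essentially bookkeeping, so there is no real obstacle at this level; the genuine content lies entirely in the two local facts about how flips and trits behave under the $5 \times 5 \times 5$ subdivision, which are stated (and presumably justified by an explicit picture or short computation) in the paragraph preceding the proposition. The only point requiring a modicum of care is that a refined flip or refined trit is literally realized \emph{inside} $R'$ — i.e., that the $2 \times 2 \times 1$ slab or $2 \times 2 \times 2$ block needed for each small move sits within the refined region — but this is immediate because refinement does not change $R$ as a topological space and the small moves all take place within the images of the cubes originally involved in the move on $R$.
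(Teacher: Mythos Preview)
Your proposal is correct and matches the paper's approach exactly: the paper states the two local facts (a flip refines to $125$ flips; a trit refines to a trit plus flips) and then asserts the proposition with ``Therefore we have,'' leaving the induction on move-sequence length implicit. Your write-up simply spells out that implicit induction, so there is no difference in substance.
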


The converse does not hold however. 
The two tilings $t_0$ and $t_1$
of the $4\times 4\times 2$ box shown in Figure \ref{fig:442}
admit no flips and are therefore not mutually accessible by flips;
their refinements $t_0^{(1)}$ and $t_1^{(1)}$ are mutually accessible.

The tiling of the $8\times 8\times 4$ torus in Figure \ref{fig:488}
admits no flips or trits.
We do not know whether there exist tilings of boxes
which do not admit either flips or trits
(small boxes admit no such tilings).

\begin{figure}[ht]
\centerline{\includegraphics[scale=0.25]{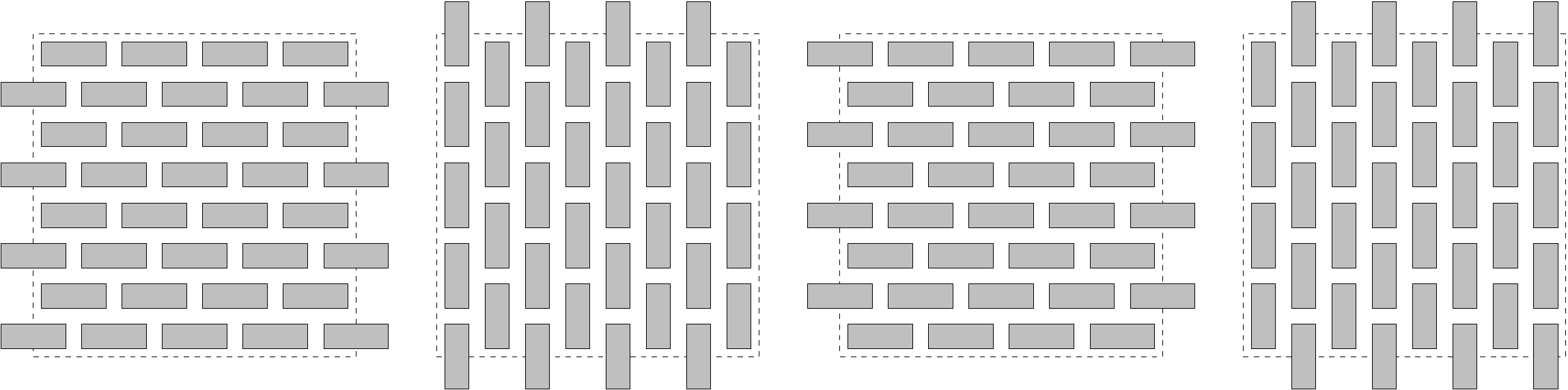}}
\caption{A tiling of the $8\times 8\times 4$ torus.
This tiling has flux $0$, twist $0$ and admits no flips or trits.}
\label{fig:488}
\end{figure}

\section{Flux}
\label{sec:Flux}

In this section we define and develop the concept of \emph{Flux} 
via homology theory,
a related notion of \emph{flux through a surface}
will be give in Section~\ref{sec:fluxthroughsurfaces}.

Recall that if $t_0$ and $t_1$ are two tilings,
then $t_1 - t_0 \in Z_1(R^*; \mathbb{Z})$, and the equivalence class
$[t_1 - t_0]$ is an element of the homology group $H_1(R^*; \mathbb{Z})$
(which is naturally identified with $H_1(R;\mathbb{Z})$
since the inclusion $R^\ast \subset R$ is a homotopy equivalence).

\begin{definition}
Let $t_\basetiling$ be a fixed base tiling.  
The \emph{Flux} of a tiling $t$ is defined as:
\[ \Flux(t) = [t - t_\basetiling] \in H_1(R^*; \mathbb{Z}). \]
\end{definition}

\begin{example}
\label{example:flux}
If $R$ is a box then $\Flux(t) = 0$ for any tiling of $R$.
A more interesting example is a 3D torus $R = \RR^3/\cL$,
where the lattice $\cL \subset \ZZ^3$ is generated by
$L e_1$, $M e_2$ and $N e_3$ with
$L, M, N \in \NN^\ast$ and $N$ even.
Clearly, $H_1(R; \ZZ) = \ZZ^3$.

We choose as base tiling the \textit{vertical tiling},
with all dominoes in the direction $e_3$,
as in the first tiling of Figure \ref{fig:666f0}.
The tiling in Figure \ref{fig:488} has Flux $0$.
The four tilings of the $6\times 6\times 6$ torus
in Figure \ref{fig:666f0} have Flux equal to $(0,0,0)$;
the two tilings in Figure \ref{fig:666f1} have Flux equal to $(1,0,0)$.

In general, for the torus $R = \RR^3/\cL$
where $\cL$ is generated by $Le_1$, $Me_2$ and $Ne_3$ with even $N$
we take as base tiling the tiling similar
to the first one in Figure \ref{fig:666f0},
with all dominoes vertical.
Thus, for instance, the tiling in Figure \ref{fig:488}
has Flux equal to $(0,0,0)$.
\end{example}


\begin{figure}[ht]
\centerline{\includegraphics[scale=0.25]{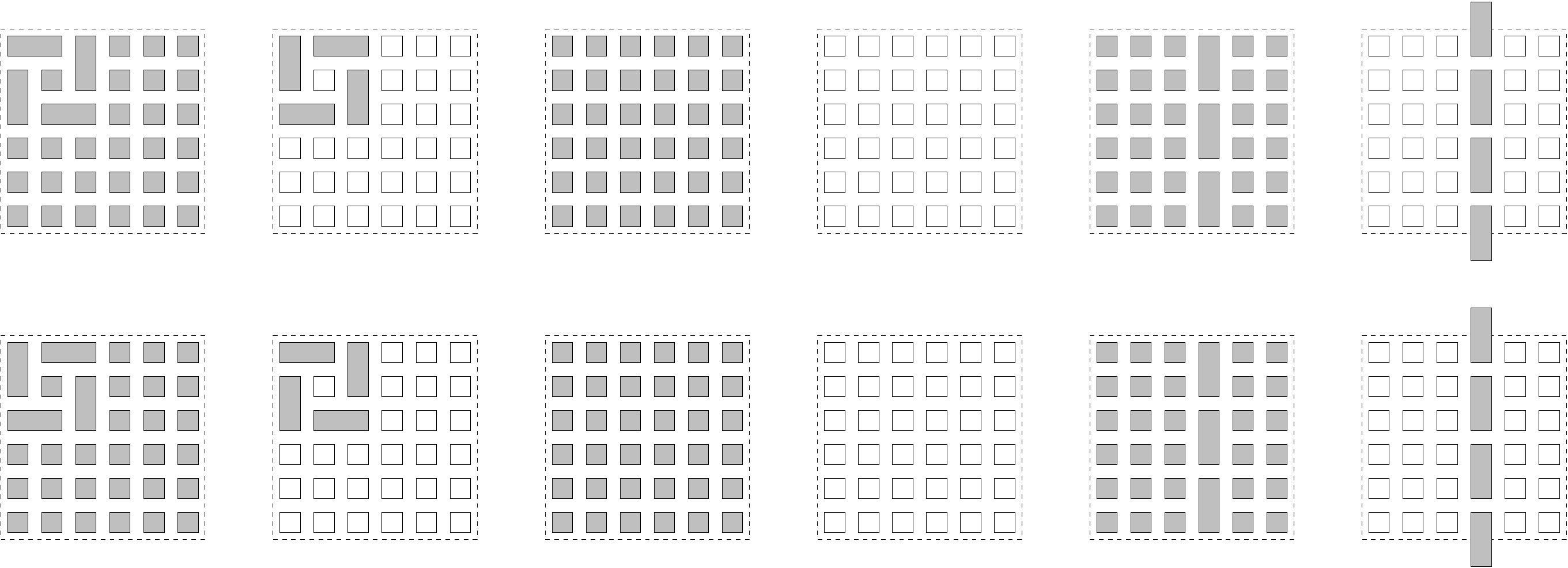}}
\caption{Two tilings $t_0$ of the $6\times 6\times 6$ torus
with Flux equal to $(1,0,0)$.}
\label{fig:666f1}
\end{figure}

If $t_0$ and $t_1$ differ by a flip
then $t_1 - t_0$ is the boundary of a square.
Similarly, if $t_0$ and $t_1$ differ by a trit
then $t_1 - t_0$ is the boundary of a sum of three squares.
In either case we have:

\begin{prop}
If $t_0$ and $t_1$ differ by flips and trits, then $\Flux(t_0) = \Flux(t_1)$.
\end{prop}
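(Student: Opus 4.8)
The plan is to show that a flip or trit changes $t_1 - t_0$ only by a boundary, hence does not change the homology class $[t_1 - t_0]$, and then iterate.

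First I would observe that $\Flux$ depends only on the difference chain: if $t_0$ and $t_1$ are two tilings, the chain $t_1 - t_\base$ and the chain $t_0 - t_\base$ differ by $(t_1 - t_\base) - (t_0 - t_\base) = t_1 - t_0$ in $C_1(R^\ast;\ZZ)$ (using that dimers in both $t_0$ and $t_1$ cancel, consistent with the orientation convention). So $\Flux(t_1) - \Flux(t_0) = [t_1 - t_0]$, and it suffices to prove that whenever $t_0$ and $t_1$ differ by a single flip or a single trit, $[t_1 - t_0] = 0$ in $H_1(R^\ast;\ZZ)$.

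Next, for a flip: the two dimers removed and the two dimers inserted all lie in a single $2\times 2\times 1$ slab, so the four corresponding edges of $R^\ast$ are the four edges of a $2$-cell (square face) of $R^\ast$. With the white$\to$black orientation convention, $t_1 - t_0$ traverses these four edges and is exactly $\pm$ the boundary of that square, hence a boundary, hence null-homologous. For a trit: the three dimers removed and three dimers inserted all sit inside a $2\times 2\times 2$ cube of $R$, i.e. inside the corresponding $2$-cube of $R^\ast$; the difference $t_1 - t_0$ is a cycle of length six supported on the edges of this small cube, and one checks it equals the boundary of a sum of three of the square faces of that cube (the three faces meeting at the appropriate vertex), as already noted in the text preceding the statement. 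In either case $t_1 - t_0 \in B_1(R^\ast;\ZZ)$, so $[t_1 - t_0] = 0$.

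Finally I would conclude by induction on the length of the sequence of moves: if $s = s_0, s_1, \dots, s_n$ with each consecutive pair differing by a flip or trit, then $\Flux(s_{k+1}) = \Flux(s_k)$ for each $k$ by the above, so $\Flux(s_0) = \Flux(s_n)$ by transitivity. The only step requiring genuine care is the trit case: one must verify that the six edges involved really do bound within $R^\ast$ and identify the correct triple of faces with consistent orientations. Since the excerpt has already asserted this bounding fact, the argument reduces to bookkeeping, and I expect no real obstacle.
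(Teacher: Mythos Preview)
Your proposal is correct and follows exactly the paper's approach: the paper's entire argument is the observation (stated just before the proposition) that for a flip $t_1-t_0$ is the boundary of a square and for a trit it is the boundary of a sum of three squares, hence $[t_1-t_0]=0$. Your write-up simply makes the reduction to a single move and the induction explicit, which the paper leaves implicit.
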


Again, the converse is not true.
The tiling in Figure \ref{fig:488} has Flux equal to $(0,0,0)$
but is not connected by flips and trits to the base tiling
(since it admits neither flips not trits).
Experiments, however, indicate that most tilings $t$ with
$\Flux(t) = \Flux(t_\basetiling) = 0$
can be joined to $t_\basetiling$ by a finite sequence of flips and trits.
Indeed, all four tilings in Figure \ref{fig:666f0}
can be pairwise joined by flips and trits.
Out of several thousand randomly selected tilings of tori,
not a single exception appeared.
The tiling $t_1$ in Figure \ref{fig:488} was specially constructed and is an exception.


While the converse is not true,
our main Theorem, Theorem~\ref{theo:main}, provides a correct converse to this statement:
there exist refinements of the two tilings
that are connected by flips and trits
if and only if the Flux are equal.

First, we show that the Flux is preserved under refinement.

\begin{lemma}
\label{lem:refineflux}
If $t'$ is the refinement of a tiling $t$, then 
$\Flux(t') = \Flux(t)$.
\end{lemma}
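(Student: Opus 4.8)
The plan is to compare the cycles $t-t_\basetiling$ and $t'-t'_\basetiling$ directly inside the common space $R=R'$, using an explicit ``collapsing'' chain map $\pi_\#\colon C_1(\cG((R')^\ast);\ZZ)\to C_1(\cG(R^\ast);\ZZ)$. On vertices, $\pi$ sends the center of a small cube to the center of the big cube containing it. An edge of $\cG((R')^\ast)$ joins two small cubes lying either in a common big cube or in two adjacent big cubes $V,V'$; $\pi_\#$ sends it to $0$ in the first case and to the edge $VV'$ of $\cG(R^\ast)$ in the second. This respects incidences, so $\pi_\#$ is a chain map and carries $1$-cycles to $1$-cycles. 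Taking the base tiling of $R'$ to be $t'_\basetiling$, the lemma reduces to showing that $t'-t'_\basetiling$ and $t-t_\basetiling$ represent the same class in $H_1(R;\ZZ)$ under the standard identifications coming from $\cG(R^\ast)\hookrightarrow R^\ast\hookrightarrow R$ and $\cG((R')^\ast)\hookrightarrow (R')^\ast\hookrightarrow R'=R$.

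\emph{Homological bookkeeping.} The first step is to check that $[w]=[\pi_\#(w)]$ in $H_1(R;\ZZ)$ for every $1$-cycle $w$ of $\cG((R')^\ast)$; equivalently, $\pi_\ast$ realizes the canonical identification of $H_1((R')^\ast;\ZZ)$ with $H_1(R^\ast;\ZZ)$. For each small-cube center $x$, let $\lambda_x$ be the straight segment from the big-cube center $\pi(x)$ to $x$; it lies in that big cube, hence in $R$. For an oriented edge $x\to y$ of $\cG((R')^\ast)$, the edge itself and the concatenation of $\lambda_x$ reversed, then $\pi_\#(x\to y)$, then $\lambda_y$ are two paths with the same endpoints lying in the union of the (one or two) big cubes involved, which is convex; so their difference bounds a $2$-chain in $R$. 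Summing these relations over the edges of $w$, the $\lambda$-contributions cancel because $w$ is a cycle, leaving $w\equiv\pi_\#(w)$ modulo boundaries in $R$.

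\emph{Local computation.} The main step is that $\pi_\#(t')=t$ in $C_1(\cG(R^\ast);\ZZ)$ for every tiling $t$; by linearity it suffices to prove $\pi_\#(d')=[d]$ for a single domino $d=V\cup V'$, with $V$ white, $V'$ black, meeting along a face $F$. The refinement $d'$ consists of $125$ small dimers all parallel to the $VV'$-axis; those contained in the refined $V$ or the refined $V'$ are killed by $\pi_\#$, and the remaining $25$ are exactly the dimers straddling $F$, one in each transverse column. Each of these maps to $[d]$ or $-[d]$ according to whether its small cube on the $V$-side of $F$ is white or black. That $5\times5$ layer of small cubes is checkerboard-colored and its four corners, being corners of the refined $5\times5\times5$ block $V$, are painted white; hence it has $13$ white and $12$ black cubes, and $\pi_\#(d')=(13-12)[d]=[d]$. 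I expect this to be the delicate point: one must follow the white$\to$black orientations closely enough to see that the $25$ straddling dimers contribute a net single copy of $[d]$ rather than $25$ copies, and it is precisely the parity features of the refinement pattern (a corner of the $F$-layer is a corner of the block, painted the color of $V$) that make the count come out to $1$.

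Combining the two steps, $\pi_\#(t'-t'_\basetiling)=t-t_\basetiling$, so applying the bookkeeping step to $w=t'-t'_\basetiling$ gives $\Flux(t')=[t'-t'_\basetiling]=[\pi_\#(t'-t'_\basetiling)]=[t-t_\basetiling]=\Flux(t)$ under the stated identifications.
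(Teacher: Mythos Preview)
Your proof is correct but takes a genuinely different route from the paper.  The paper argues geometrically: given $\gamma = t - t_\basetiling$, it modifies the naive refinements $t'$ and $t'_\basetiling$ by flips near $\gamma$ to obtain tilings $t^{[1;\gamma]}$ and $t_\basetiling^{[1;\gamma]}$ whose difference is literally the curve $\gamma' = \gamma$ sitting inside $(R')^\ast$; since flips do not change Flux and the curve is unchanged as a subset of $R$, the homology class is preserved.  You instead build an explicit collapsing chain map $\pi_\#$ and compute $\pi_\#(t') = t$ on the chain level via the $13-12$ count on the $5\times 5$ interface layer, then invoke the elementary straight-line homotopy to identify $[w]$ with $[\pi_\#(w)]$ in $H_1(R;\ZZ)$.

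Your approach is more self-contained for this lemma and makes transparent why $5\times5\times5$ (or any odd side length) is the right refinement: the interface layer has an excess of exactly one cube of the correct color, so the $25$ straddling dimers collapse to a single copy of $[d]$ rather than $25$.  The paper's approach, on the other hand, introduces the modified refinements $t^{(k;\gamma)}$ and $t^{[k;\gamma]}$ as a reusable construction; these reappear in Section~\ref{sec:twist} when defining the flux of a tiling around a curve, so the geometric detour pays for itself later.
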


Before proving the lemma, we introduce the concept
of a \textit{modified refinement} $t^{(k;\gamma)}$
of a tiling $t$: this construction will be used again later.
An embedded cycle $\gamma: \Ss^1 \to R^\ast$ is refined
by merely interpreting it as $\gamma' = \gamma: \Ss^1 \to (R')^\ast$.
A tiling $t$ of $R$ is  {\it tangent} to an embedded cycle $\gamma$
if every vertex of $R^\ast$ in the image $\gamma[\Ss^1]$
is one of the endpoints of a dimer $d$
contained in the tiling $t$ and in $\gamma[\Ss^1]$.
Unfortunately, if $t$ is tangent to $\gamma$
it does not follow that $t'$ is tangent to $\gamma'$.
In this situation, we therefore define
a  {\it modified refinement} $t^{(1;\gamma)}$
which is tangent to $\gamma$.
At each $5 \times 5 \times 5$ cube around a vertex of $R^\ast$
belonging to the image of $\gamma$,
perform a flip in $t'$ if needed
(as in Figure~\ref{fig:refine})
to obtain the required tiling $t^{(1;\gamma)}$
tangent to $\gamma'$.
We can iterate this procedure to define $t^{(k;\gamma)}$
which is tangent to $\gamma^{(k)}$
and connected to $t^{(k)}$ by flips in the neighborhood of $\gamma$.

\begin{figure}[ht]
\centerline{\includegraphics[scale=0.25]{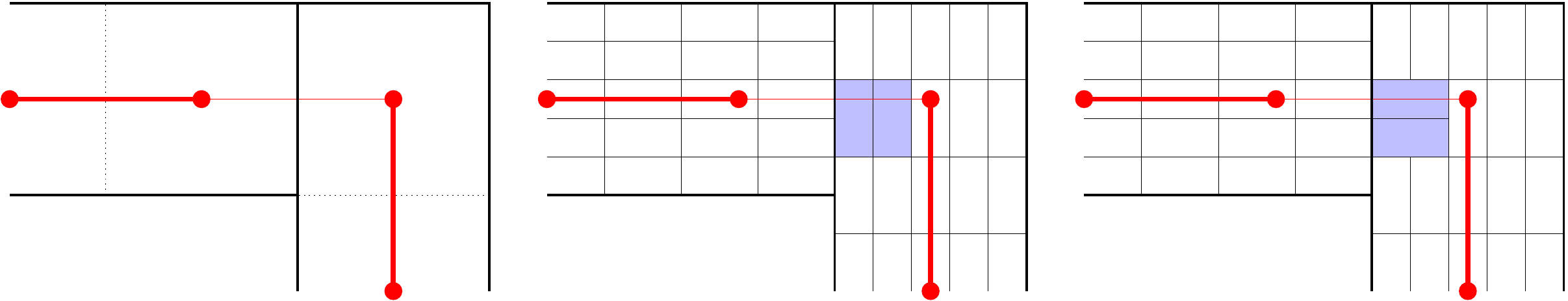}}
\caption{Detail of a tiling $t$ (left)
and of its a refinement $t'$ (center).
Given a boundary cycle $\gamma$ (shown in all three figures),
a few flips take us from $t'$ to
a \textit{modified refinement}:
a tiling $t^{(1,\gamma)}$ (right) which is tangent to $\gamma$.
The position where flips were performed is shaded.}
\label{fig:refine} 
\end{figure}

\begin{proof}[Proof of Lemma \ref{lem:refineflux}]
Consider two tilings $t_0$ and $t_1$ of $R$
and let $\gamma$ be the system of cycles $t_1 - t_0$.
Then $t_0^{(k;\gamma)}$ and $t_1^{(k;\gamma)}$
are both tangent to $\gamma^{(k)}$.
Moreover, a sequence of flips in the neighborhood of $\gamma$
yields new tilings $t_0^{[k;\gamma]}$ and $t_1^{[k;\gamma]}$
such that $t_1^{[k;\gamma]} - t_0^{[k;\gamma]} = \gamma^{(k)}$.
If $t_\basetiling$ is the base tiling of $R$,
take $t_\basetiling'$ to be the base tiling of $R'$.
It then follows from the construction of $t_1^{[k;\gamma]}$
that $\Flux(t') = \Flux(t)$.
\end{proof}

\section{Surfaces}
\label{sec:surface}

In order to better understand the $\Flux$, and to define the Twist, we will work heavily with discrete surfaces.
Consider a cubiculated region $R$ and its dual $R^\ast$.
An  {\it embedded discrete surface} in $R^\ast$ is a pair
$(S,\psi)$ where:
\begin{itemize}
\item{ $S$ is an oriented topological surface
with (possibly empty) boundary $\partial S$;}
\item{ $\psi: S \to R^\ast \subset R$ is an injective continuous map
whose image is a union of vertices, edges and squares of $R^\ast$.}
\end{itemize}

We will sometimes  abuse notation using $S$ to refer to
 the domain surface $S$,
the image $\psi[S]$ and 
the element of $C_2(R^\ast;\ZZ)$ obtained by adding
the squares in $\psi[S]$
(with the orientation given by $S$ and $\psi$).

\begin{figure}[ht]
\centerline{\includegraphics[scale=0.25]{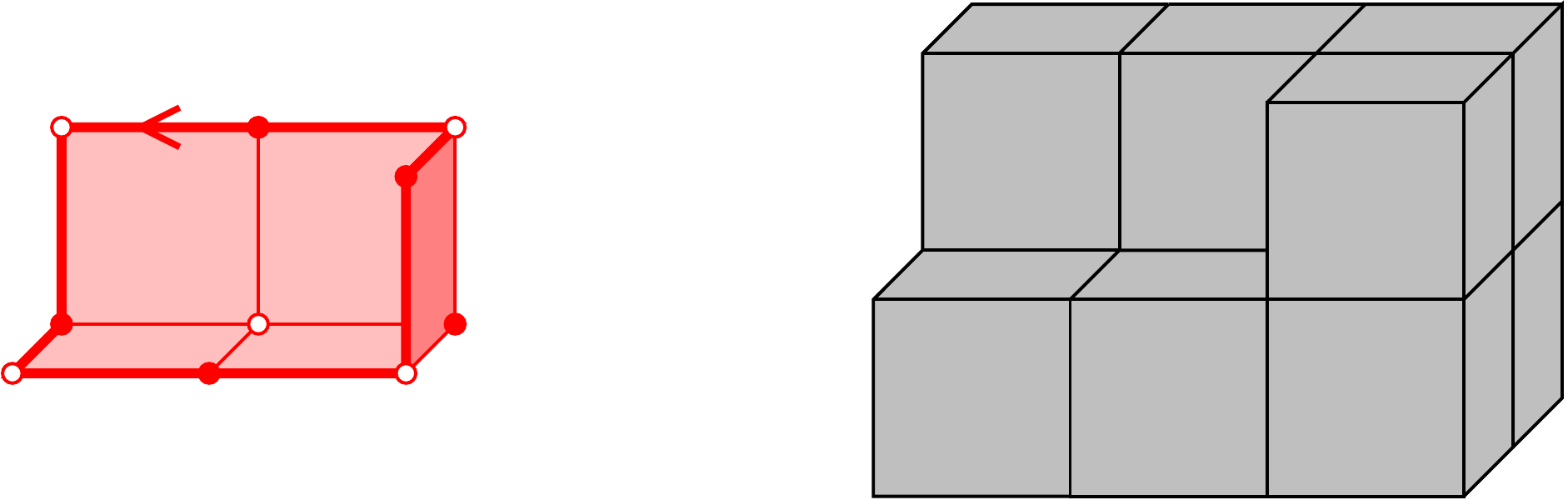}}
\caption{An embedded discrete surface which consists of five squares.
The left figure shows the surface in $R^\ast$,
the thicker line represents the oriented boundary of the surface.
The right figure shows the corresponding cubes in $R$.}%
\label{fig:surface_example}%
\end{figure}

Figure \ref{fig:surface_example} shows a simple embedded discrete surface.
Since both $S$ and $R$ are oriented,
this defines an orientation transversal to $S$.
In other words, at each square of $S$
there is a well defined normal vector.

Consider a tiling $t_0$ of a region $R$
and an embedded discrete surface $S$.
The tiling and the surface
are  {\it tangent at the boundary}
if $t_0$ and the system of cycles $\psi|_{\partial S}$ are tangent.
In particular, if $\partial S = \varnothing$ then
any tiling is tangent at the boundary to $S$.
The tiling and the surface are  {\it tangent}
if they are tangent at the boundary and, furthermore,
every vertex of $R^\ast$ in $S$
is an endpoint of a dimer in $t_0$ and $S$.

Refinements are important throughout the paper
and some additional remarks on the subject are in order.
Consider a tiling $t_0$ of $R$ tangent to $S$.
The refinement $t_0'$ of $t_0$ is not tangent to $S$ but,
as in Figure \ref{fig:refine},
a few flips are sufficient to go from $t_0'$ to a tiling $t_1$
which is tangent to $S$.
We will pay little attention to the distinction
between $t'_0$ and $t_1$ and speak of $t_1$ as the refinement of $t_0$.

We shall also have to consider tilings $t_0$ of $R$ which
are tangent to $\partial S$ but which cross $S$
(that is, fail to be tangent to $S$). 
If $t_0$ crosses $S$ at $v$ vertices, then
the refinement $t_0'$ crosses $S$ at many more points
(between $9v$ and $25v$).
Again, a few flips take $t_0'$ to $t_1$ which crosses $S$
at the original $v$ points only;
we often simply  call $t_1$ the refinement of $t_0$, slightly abusing notation.

\begin{definition}
A  \emph{ (discrete) Seifert surface} for a pair of tilings $(t_0,t_1)$
of a region $R$ is a connected embedded discrete (oriented) surface $S$ where
the restriction $\psi|_{\partial S}$ is the collection of nontrivial cycles of
$t_1 - t_0$ (respecting orientation).
\end{definition}

By definition, both $t_0$ and $t_1$ are tangent at the boundary
to $S$.
The tiling $t_0$ is tangent to $S$ if and only if $t_1$ is.
As above, if $S$ is a Seifert surface for $(t_0,t_1)$
then its refinement $S'$ is a Seifert surface for the pair
of refinements $(t_0',t_1')$.

\begin{example}
If $t_0$ and $t_1$ differ by a flip,
a unit square is a valid Seifert surface for the pair
(the simplest surface, but not the only one).
If $t_0$ is obtained from $t_1$ after a single positive trit,
we may assume that the situation is, perhaps after some rotation,
as portrayed in Figure \ref{fig:surfaceTritExample}.
Note that in order to build the surfaces portrayed
in Figure \ref{fig:surfaceTritExample},
we need that the interior point of the surface
in either case is a center of a cube in $R$. 
This condition is satisfied in at least one of the cases since  $\partial R$ is a manifold. 
\end{example}

\begin{figure}%
\centerline{\scalebox{.7}{\begin{tikzpicture}

\coordinate (V0) at (0,0);
\coordinate (V1) at (2,0);
\coordinate (V2) at (1,1);
\coordinate (V3) at (3,1);
\coordinate (V4) at (0,2);
\coordinate (V5) at (2,2);
\coordinate (V6) at (1,3);
\coordinate (V7) at (3,3);

\coordinate (w0) at (3,0);
\coordinate (w1) at (4,0);
\coordinate (w2) at (3.5,.5);
\coordinate (w3) at (4.5,.5);
\coordinate (w4) at (3,1);
\coordinate (w5) at (4,1);
\coordinate (w6) at (3.5,1.5);
\coordinate (w7) at (4.5,1.5);

\path[opacity=.7, fill=lightgray ] (V0) -- (V1) -- (V5) -- (V4) -- cycle;

\path[opacity=.7, fill=lightgray ] (V0) -- (V2) -- (V6) -- (V4) -- cycle;

\path[opacity=.7, fill=lightgray ] (V4) -- (V5) -- (V7) -- (V6) -- cycle;
  

%

\draw[->-=.5, line width=2pt] (V0) -- (V1);
  
\draw (V2) -- (V3);
\draw (V4) -- (V5);
\draw[->-=.5, line width=2pt] (V7) -- (V6);
\draw (V0) -- (V4);
\draw[->-=.5, line width=2pt] (V1) -- (V5);

\draw[->-=.5, line width=2pt] (V6) -- (V2);
\draw (V3) -- (V7);

\draw[->-=.5, line width=2pt] (V2) -- (V0);
\draw (V1) -- (V3);
\draw (V4) -- (V6);

\draw[->-=.5, line width=2pt] (V5) -- (V7);






\end{tikzpicture} \, \, \,  \begin{tikzpicture}

\coordinate (V0) at (0,0);
\coordinate (V1) at (2,0);
\coordinate (V2) at (1,1);
\coordinate (V3) at (3,1);
\coordinate (V4) at (0,2);
\coordinate (V5) at (2,2);
\coordinate (V6) at (1,3);
\coordinate (V7) at (3,3);

\coordinate (w0) at (3,0);
\coordinate (w1) at (4,0);
\coordinate (w2) at (3.5,.5);
\coordinate (w3) at (4.5,.5);
\coordinate (w4) at (3,1);
\coordinate (w5) at (4,1);
\coordinate (w6) at (3.5,1.5);
\coordinate (w7) at (4.5,1.5);

\path[opacity=.7, fill=lightgray ] (V0) -- (V1) -- (V3) -- (V2) -- cycle;
\path[opacity=.7, fill=lightgray ] (V2) -- (V3) -- (V7) -- (V6) -- cycle;
\path[opacity=.7, fill=lightgray ] (V1) -- (V3) -- (V7) -- (V5) -- cycle;
  

%

\draw[->-=.5, line width=2pt] (V0) -- (V1);
  
\draw (V2) -- (V3);
\draw (V4) -- (V5);
\draw[->-=.5, line width=2pt] (V7) -- (V6);
\draw (V0) -- (V4);
\draw[->-=.5, line width=2pt] (V1) -- (V5);

\draw[->-=.5, line width=2pt] (V6) -- (V2);
\draw (V3) -- (V7);

\draw[->-=.5, line width=2pt] (V2) -- (V0);
\draw (V1) -- (V3);
\draw (V4) -- (V6);

\draw[->-=.5, line width=2pt] (V5) -- (V7);






\end{tikzpicture}}}
\caption{Two possible Seifert surfaces.}
\label{fig:surfaceTritExample}%
\end{figure}

It follows from homology theory that
in order for a discrete Seifert surface to exist,
we must have $\Flux(t_0) = \Flux(t_1)$.
The converse is not true:
in the example of
Figure \ref{fig:233cyc} there exists 
a disconnected surface (two disks) 
and a connected surface which does not respect orientation
(a cylinder) but no discrete Seifert surface for the pair.
A smooth Seifert surface exists: see Figure \ref{fig:233seifert}.
This is one of several occasions when taking refinements
solves our difficulties.
Similarly, in Figure \ref{fig:442} the desired
smooth Seifert surface is a torus minus two disks:
there exists a discrete Seifert surface after taking refinements.

\begin{lemma}
\label{lemma:existseifert}
Consider a cubiculated region $R$
and two tilings $t_0$ and $t_1$ of $R$.
If $\Flux(t_0) = \Flux(t_1)$ then
for sufficiently large $k \in \NN$ there
exists a discrete Seifert surface in $R^{(k) \ast}$
for the pair $(t_0,t_1)$.
\end{lemma}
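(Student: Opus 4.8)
The plan is to start from the hypothesis $\Flux(t_0)=\Flux(t_1)$, which means $t_1-t_0$ is a nullhomologous cycle in $R^\ast$, hence (identifying $H_1(R^\ast)\cong H_1(R)$) bounds in $R$. First I would produce \emph{some} oriented surface, ignoring the combinatorial constraints: since $[t_1-t_0]=0$ in $H_1(R;\ZZ)$, there is a $2$-chain $\sigma \in C_2(R;\ZZ)$ with $\partial\sigma = t_1-t_0$; more usefully, using standard facts about surfaces in $3$-manifolds, the collection of nontrivial cycles of $t_1-t_0$ (an embedded, oriented $1$-submanifold of the interior of $R$) bounds an embedded compact oriented surface $S_{\mathrm{top}}$ in $R$ — a topological Seifert surface. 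The issue is only that $S_{\mathrm{top}}$ need not be a subcomplex of $R^\ast$, and when approximated by squares of $R^\ast$ it may fail to be embedded or fail to be a manifold (self-intersections along edges shared by four faces, etc.).

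The core of the argument is then an approximation-and-repair step performed after passing to a refinement $R^{(k)}$ for $k$ large. After refining $k$ times, the dual complex $R^{(k)\ast}$ becomes finer by a factor $5^k$, while $S_{\mathrm{top}}$ (and the boundary cycles $t_1-t_0$, which are carried along by the refinement as in the discussion preceding Lemma~\ref{lem:refineflux}) stays fixed as a subset of $R$. I would isotope $S_{\mathrm{top}}$, rel a neighborhood of its boundary, so that it is in ``general position'' with respect to the cubical structure of $R^{(k)}$, and then replace it by the union of all squares of $R^{(k)\ast}$ that it meets transversally — a cubical approximation $S_0$. For $k$ large this approximation is $C^0$-close to $S_{\mathrm{top}}$, carries the same boundary (here I use that near $\partial S_{\mathrm{top}}$ the boundary cycles already lie in $R^{(k)\ast}$ and are tangent to appropriate tilings after a few local flips, so the approximation can be taken to agree with them exactly), and is an oriented subcomplex of $R^{(k)\ast}$; the remaining defect is that $S_0$ may be non-embedded or non-manifold at isolated edges/vertices. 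These singularities are localized, and each can be resolved by a local modification — either a cut-and-paste that separates two sheets, or a small detour replacing a bad square by the three other squares of a unit cube of $R^{(k)\ast}$ (the same kind of local substitution used for the two Seifert surfaces of a trit in Figure~\ref{fig:surfaceTritExample}) — at the cost of possibly a further single refinement to guarantee the needed interior cube centers exist. Finitely many such repairs, each supported in a bounded region, produce an embedded discrete surface; if the result is disconnected, tube the components together along arcs in $R^{(k)\ast}$ (again possible after one more refinement), which does not change the boundary and keeps the surface embedded and oriented. The resulting $S$ is then a discrete Seifert surface for $(t_0,t_1)$ in $R^{(k)\ast}$.

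The main obstacle I expect is the repair step: making precise that the cubical approximation $S_0$ of an embedded topological surface has only \emph{local, boundedly-complicated} failures of embeddedness/manifoldness, and that each failure admits a local fix that (i) does not disturb $\partial S$, (ii) does not create new singularities outside a controlled neighborhood, and (iii) keeps orientations consistent. Controlling the interaction between nearby repairs — ensuring that fixing one singularity does not spoil another — is the delicate bookkeeping; the standard device is to make the singular set sparse by refining enough that distinct bad cells are far apart in $R^{(k)\ast}$, so each repair is genuinely independent. The connectivity and orientation-compatibility conditions in the definition of discrete Seifert surface are exactly what forces us into refinements (as the cylinder-versus-two-disks example of Figure~\ref{fig:surfs_lines_example} shows), so I would emphasize that all three of these — embeddedness, the manifold condition, connectedness — are achieved simultaneously only in $R^{(k)\ast}$ for $k$ sufficiently large, and that no uniform bound on $k$ is claimed.
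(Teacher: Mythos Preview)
Your overall strategy---produce a smooth/topological Seifert surface first, then pass to a fine refinement and approximate it by a cubical one---is exactly the paper's two-step plan. There are, however, two substantive differences worth noting.

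First, the step where you invoke ``standard facts about surfaces in $3$-manifolds'' to obtain the embedded topological Seifert surface $S_{\mathrm{top}}$ is precisely the point the paper flags as nontrivial: the authors remark that the usual references treat only sufficiently nice regions, and at the generality of arbitrary cubiculated $3$-manifolds with boundary they could not locate a proof, so they supply one by hand (their Lemma~\ref{lemma:sublemma}). Their construction starts from a $2$-chain $s$ with $\partial s = t_1-t_0$, takes $|s_a|$ parallel copies of each square, and then resolves the resulting non-embedded surface in neighborhoods of edges and vertices by matching signed intersection points on linking circles/spheres with non-crossing arcs and disks. So your appeal to a black box is a genuine gap here, though the fix is exactly along the lines of the chain~$\sigma$ you already wrote down.

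Second, your discretization is an approximate-then-repair scheme, and you correctly identify the repair bookkeeping as the delicate part. The paper sidesteps this entirely: once a \emph{smooth} Seifert surface is in hand, they bound its maximal sectional curvature $K$, refine until $5^{n-1}>K$, and then simply classify each nearby cube as ``above'' or ``below'' $S$ according to which side contains more of its volume; the discrete surface is the interface between the two classes. The curvature bound guarantees this interface is an embedded manifold with the correct boundary, so no local repairs are needed. Your approach is not wrong, but the curvature argument is both shorter and avoids the interaction-between-repairs issue you were worried about.
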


First, we need the following essentially topological lemma.
The statement is well-known for sufficiently nice regions,
see e.g.~\cite{Lickorish}. 
As we were unable to find a proof at our level of generality,
we include one here for completeness.  

\begin{lemma}\label{lemma:sublemma}
Consider a cubiculated region $R$ and two tilings $t_0$ and $t_1$ of $R$.
If $\Flux(t_0) = \Flux(t_1)$ then
there exists a smooth Seifert surface in $R^{(1) \ast}$
for the pair $(t_0, t_1)$.
\end{lemma}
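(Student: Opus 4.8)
\textbf{Proof proposal for Lemma~\ref{lemma:sublemma}.}
The plan is to build the smooth Seifert surface by first extracting an honest homology class from the combinatorial data, realizing it by a smooth embedded surface inside $R$, and then pushing that surface into the refined dual complex $R^{(1)\ast}$ where there is enough room to make it simplicial without self-intersections. Concretely, let $\gamma = t_1 - t_0 \in Z_1(R^\ast;\ZZ)$ be the system of nontrivial cycles. By hypothesis $[\gamma] = 0$ in $H_1(R^\ast;\ZZ) \cong H_1(R;\ZZ)$, so $\gamma$ bounds. The issue, as the excerpt already flags, is that $\gamma$ may not bound an \emph{embedded orientable} surface carried by $R^\ast$ itself; the refinement is what fixes this.

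First I would replace the polygonal curve $\gamma$ by a smooth embedded $1$-manifold $\tilde\gamma$ in the interior of $R$ (pushing slightly off the $1$-skeleton of $R^\ast$), isotopic to $\gamma$ and null-homologous. Since $R$ is an oriented $3$-manifold (with boundary) and $[\tilde\gamma]=0$ in $H_1(R;\ZZ)$, standard obstruction theory / Thom--Pontryagin arguments give a smooth compact oriented embedded surface $\Sigma \subset R$ with $\partial\Sigma = \tilde\gamma$: the normal bundle of $\tilde\gamma$ in an oriented $3$-manifold is trivial, pick a framing compatible with the orientation, extend the resulting map $\tilde\gamma \to \Ss^1$ to a map $f\colon R \to \Ss^1$ representing the Poincar\'e--Lefschetz dual of $[\tilde\gamma]$ (this is where $[\tilde\gamma]=0$ is used — the dual class in $H^1(R,\partial R)$, or the appropriate relative group, is the obstruction and it vanishes because $\tilde\gamma$ bounds), and take $\Sigma = f^{-1}(\text{regular value})$, made connected by tubing together components along arcs in $R$. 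This $\Sigma$ is smooth, embedded, oriented, connected, with the correct oriented boundary; it is just not yet combinatorial.

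Next I would make $\Sigma$ transverse to the cell structure of $R$ and then use the refinement. Subdivide $R$ once to get $R'$; now each original cube is cut into $5^3$ subcubes, and the dual $R^{(1)\ast}$ has vertices at the centers of those subcubes. The point of the $5\times5\times5$ refinement (as discussed after the definition of refinements) is that the dimers of $t_0'$ and $t_1'$ near $\gamma$ run along a thin tube of $R^{(1)\ast}$, and there is a full lattice of free dual vertices in each original cube available to carry a surface. Isotope $\Sigma$ so that $\partial\Sigma$ coincides with $\gamma^{(1)}$ (the refined boundary cycles, which lie in $R^{(1)\ast}$ and to which $t_0'$, $t_1'$ are tangent after the few boundary flips described in Section~\ref{sec:refine}), keeping $\Sigma$ embedded; then perturb $\Sigma$ into general position with respect to the subcubical structure and replace it by the union of dual squares it "mostly" passes through — more carefully, approximate $\Sigma$ by a normal surface in the subdivided cell structure and read off the corresponding subcomplex of $R^{(1)\ast}$. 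Because we refined, a cube's worth of dual vertices gives enough slack to do this approximation without forcing self-intersections and without disconnecting the surface; the orientation of $\Sigma$ equips each dual square with a consistent normal, as required by the definition of embedded discrete surface. Finally, tube together any components created by the approximation using columns of dual squares, keeping connectivity and the boundary fixed.

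The main obstacle is the middle step: producing a \emph{smooth embedded} (not merely immersed, and orientable) surface bounded by $\tilde\gamma$ at this level of generality, where $R$ is only assumed to be a compact connected oriented topological $3$-manifold with boundary embedded in $\RR^N$. For such a manifold one should first fix a smooth structure (every topological $3$-manifold is smoothable, and the smooth structure is essentially unique by Moise), then the obstruction-theoretic construction of the surface as a preimage of a regular value goes through; embeddedness in dimension $3$ is automatic for a regular preimage, and orientability follows from coorientability of $\tilde\gamma$. I would also need to be slightly careful that the boundary behavior is controlled — $\tilde\gamma$ lies in the interior of $R$, so $\Sigma$ can be taken in the interior away from a collar of $\partial R$, which is what we want since a discrete Seifert surface in $R^\ast$ naturally sits in the interior. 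The passage from the smooth $\Sigma$ to a normal/discrete surface in $R^{(1)\ast}$ is then routine PL approximation theory, with the refinement supplying exactly the combinatorial room needed; this is the step the lemma is really designed to package, and it is deferred to (and completed in) the proof of Lemma~\ref{lemma:existseifert}.
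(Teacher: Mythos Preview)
Your proposal is correct in outline but takes a genuinely different route from the paper. You invoke the standard Pontryagin--Thom package: smooth $\gamma$, build a map to $\Ss^1$ on the link complement, and take a regular preimage. The paper instead gives a completely elementary, bare-hands construction: starting from any $2$-chain $s \in C_2(R^{(1)\ast})$ with $\partial s = L$, it takes $|s_a|$ parallel translated copies of each square $a$ (this is the immersed surface), and then resolves the self-intersections locally. Concretely, it deletes thin cylinders around edges and small balls around vertices of $R$; in each cylinder the sheets appear as $2k$ signed points on a circle (or $2k{+}1$ points plus the center if the edge lies on $L$), which are matched in noncrossing pairs and crossed with the edge; in each ball the resulting curves on the boundary sphere are capped by nested disks, with a final $1$-parameter straightening near vertices of $L$. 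The payoff of the paper's approach is that it is self-contained (no appeal to Moise or to obstruction theory), keeps $\partial S$ equal to the polygonal $L$ throughout rather than passing through a smoothed $\tilde\gamma$ and isotoping back, and makes the surface visibly compatible with the cubical structure. Your approach is shorter and more conceptual once the machinery is granted, and is of course the textbook route in $3$-manifold topology.

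Two small points. First, your map $f$ should be defined on $R \smallsetminus \nu(\tilde\gamma)$, not on $R$; the class you need is in $H^1(R \smallsetminus \tilde\gamma;\ZZ)$, evaluating to $1$ on each meridian, and its existence is what is equivalent to $[\tilde\gamma]=0$ in $H_1(R;\ZZ)$. Second, the entire second half of your write-up (normal-surface approximation in $R^{(1)\ast}$) is really the content of Lemma~\ref{lemma:existseifert}, not of the present lemma, which only asks for the smooth surface; you acknowledge this at the end, but it would be cleaner to stop once $\Sigma$ is built.
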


\begin{figure}[ht]%
\centering
\centerline{\includegraphics[scale=0.25]{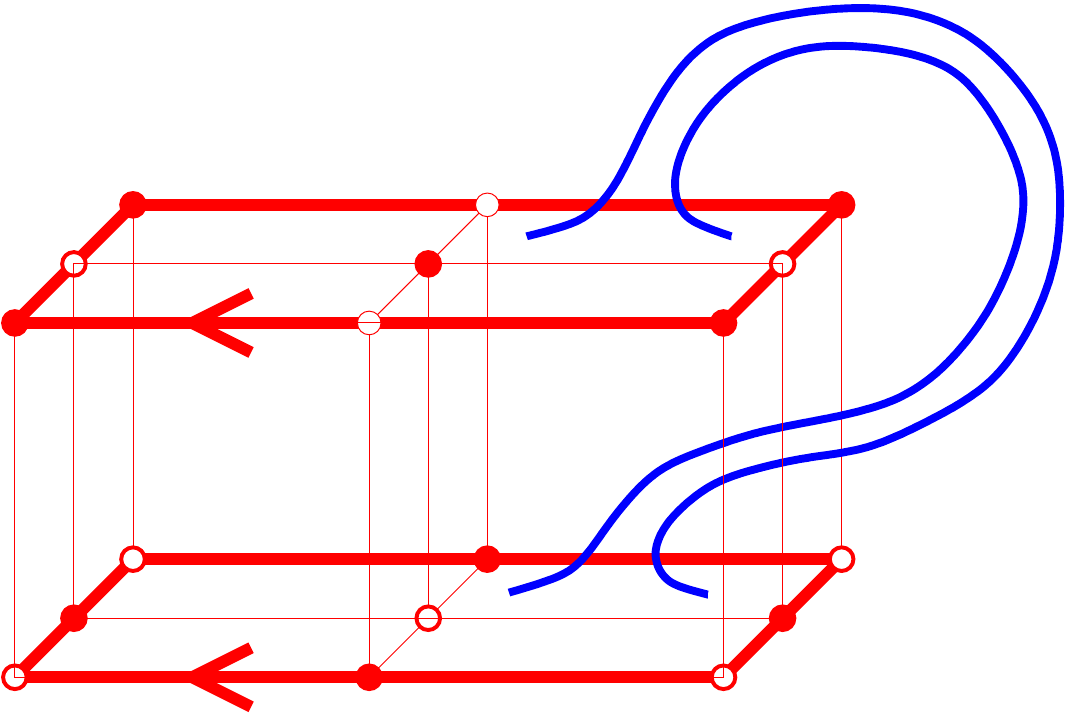}}
\caption{Let $t_0$ and $t_1$ be the two tilings
in Figures \ref{fig:233} and \ref{fig:233ast}.
Their difference is the union of two non trivial curves,
as in Figure \ref{fig:233cyc}.
There is no discrete Seifert surface,
but there exists a smooth Seifert surface.
There exists a discrete Seifert surface
in an appropriate refinement.}
\label{fig:233seifert}
\end{figure}

\begin{proof}
To simplify notation, write $L$ for the difference $t_1 - t_0$.
The refinement guarantees that $L$ is contained in the interior of $R$.
The hypothesis $\Flux(t_0) = \Flux(t_1)$ guarantees that $L$ is a boundary,
i.e.,
that there exists $s$ in $C_2(R^{(1) \ast}) $
with $\partial(s) = L$.

For each vertex $v$ of $R$, construct a small open ball $b_v$ around
$v$.  For each edge $e$ of $R$, construct a thin open cylinder $c_e$
around $e$, where the radii of the cylinders is much less than that of
the balls.  Let $R_0 = R \, \smallsetminus \, \{b_v \cup c_e \}$, $R$
minus the union of all $b_v$ and $c_e$.  Let $R_1 = R \,
\smallsetminus \, \{b_v\}$, $R$ minus the union of all $b_v$.  Thus
$R_0 \subset R_1 \subset R$.  We construct a smooth Seifert surface in
three stages: first in $R_0$, then extend it to $R_1$ and finally to
$R$.

For $R_0$, we consider each square $a$ of $R$ and its coefficient $s_a$ in $s$.
Orient the square $a$ so that $s_a \geq 0$.
Construct $S_0$ in $R_0$ by taking $s_a$ translated copies of $a$,
with boundary falling outside $R_0$.

Next consider each edge $e$. There are two possibilities:
$e$ may or may not belong to the support of $L$.
First assume it does not.
Examine the boundary of $c_e$: we see a number of line segments
(the intersection of the squares in $S_0$ with the boundary of $c_e$).
This can be described by a family of $2k$ points in a circle,
$k$ positive and $k$ negative.
It is possible to match positive and negative points
and draw curves joining them
so that the curves do not cross
(see Figure \ref{fig:circles}).
Indeed, by induction, take two adjacent points,
one positive and one negative,
and join them by a curve near the circle.
For the other points construct segments taking to a smaller circle.
Now use induction on $k$.
Take the Cartesian product of these curves by $e$
to construct a surface in $R_1$.

\begin{figure}[ht]%
\centering
\centerline{\includegraphics[scale=0.25]{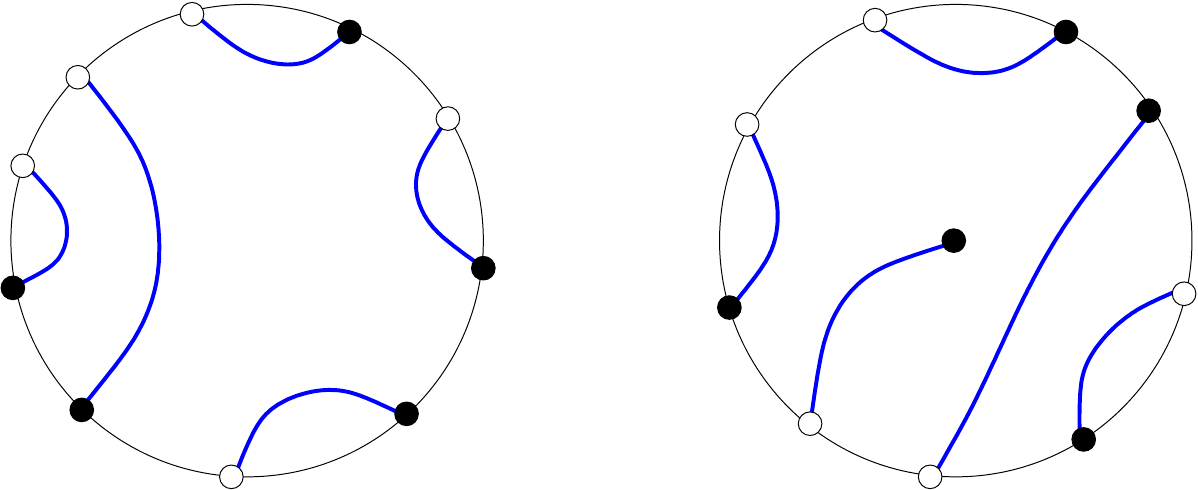}}
\caption{If there are $k$ black and $k$ white points 
on a circle then they can be joined by disjoint arcs.
Similarly, if there are $k$ black and $k+1$ white points 
on a circle and the center is also black,
then they can also be joined by disjoint arcs.}
\label{fig:circles}
\end{figure}

Now consider the case where $e$ belongs to the support of $L$.
Similar to above, we have $k$ positive points on a circle,
$(k+1)$ negative points on the circle
and a positive origin (see again Figure \ref{fig:circles}).
The same inductive proof constructs smooth disjoint curves joining the points
(the last negative point on the circle will be connected to the center).
Again, take the Cartesian product of these curves with $e$
to construct a surface in $R_1$.
This completes the construction of the surface $S_1$ in $R_1$.

We now extend the surface to $R$, that is, we extend it to each ball $b_v$.
Again we consider two cases:
$v$ in the support of $L$ and $v$ not in support of $L$.
First consider $v$ not in the support of $L$.
Examine the boundary of $b_v$, a sphere $S_v$.
Our previous construction obtains
a family of disjoint oriented simple closed curves in $S_v$.
Attach disjoint disks contained in $b_v$ with
these curves as
boundary:
Take a point in $S_v$ not in the cycles and call it infinity,
so that $S_v$ is identified with the plane.
Cycles are now nested: start with an innermost cycle and close it;
proceed along cycles to the outermost.

Next, consider the case where $v$ is in the support of $L$.
Again examine the sphere $S_v$.
We have a family of disjoint oriented simple curves:
one of them is a segment (with two endpoints), the others are closed (cycles).
Take the ``point at infinity" in $S_v$ very near the segment
so that in the plane the segment is outside the cycles.
Close cycles from inner to outermost.

Finally, consider the segment.
Notice it can be long, perhaps going several times
around a face of the octahedron
formed on the sphere by the adjacent cubes.
Take a smooth $1$-parameter family of diffeomorphisms
keeping the endpoints of the segment fixed and
taking the segment to a geodesic on the sphere
(this may involve rotations around endpoints).
Apply this family of diffeomorphisms on spheres of decreasing radii
and complete the surface with a plane near the vertex.
This completes the construction of $S$.
\end{proof}

\begin{proof}[Proof of Lemma~\ref{lemma:existseifert}]
Construct a smooth Seifert surface $\psi_{\infty}: S \to R$
(as in Lemma~\ref{lemma:sublemma}).
A sufficiently large value of $k$
allows for an approximation $\psi$ of $\psi_{\infty}$
such that $\psi$ is a discrete Seifert surface as follows:

Consider a smooth Seifert surface $S$ as in Lemma~\ref{lemma:sublemma}.
Let $K$ be the maximum sectional curvatures of $S$,
up to and including the boundary.  
Take $n$ such that $K < 5^{(n-1)}$ and refine $n$ times
so that the radii of curvature at any point is always 
more than twice the diagonal of any cube.
Now classify cubes near the surface
(meaning with center at a distance $< 5/2$ from the surface,
thus including cubes crossing the surface)
as \emph{above} or \emph{below} $S$ according to the orientation of $S$
and the measure of the cube on each side of $S$.
The tiled surface $S$ lies between cubes
which are \emph{above} and cubes \emph{below} 
(and therefore closely approximates $S$).
The curvature estimate implies the good behavior of $S$.
\end{proof}

\section{Flux through surfaces}
\label{sec:fluxthroughsurfaces}

We now introduce \emph{flux through surfaces}.
We relate this notion with the $\Flux(t)$ at the end of the section.
The interpretation here provides motivation for our choice of terminology;
we think of tilings as flowing through regions and across surfaces.

Let $t$ be a tiling and $S$ an embedded discrete surface such that
$t$ is tangent to $S$ at the boundary.
For each vertex $v$ of $R^\ast$ in the interior of $S$
consider the only dimer $d$ of $t$ adjacent to $v$.
Draw a small vector along $d$ starting from $v$:
its endpoint may be on $S$ (if $d$ is contained in $S$),
above, or below $S$.

When we speak of above and below $S$, this is to be understood as follows.
The ambient space $R$ is an oriented 3D manifold;
the surface $S \subset R$ is also oriented
thus giving us a canonical normal vector, at least on squares.
This normal vector is understood to be pointing up.

\begin{definition}
\label{definition:fluxthroughS}
For a surface $S$ and a tiling $t$,  define the \emph{flux of $t$ through $S$}, $\phi(t; S) \in \ZZ$, as follows. Set
\[ \varphi(v;t;S) = \ccolor(v) \cdot
\begin{cases}
+1, &\text{ endpoint above } \psi[S]; \\
0, &\text{ endpoint on } \psi[S]; \\
-1, &\text{ endpoint below } \psi[S];
\end{cases} \qquad
\phi(t;S) = \sum_v \varphi(v;t;S), \]
where the color of $v$ is $+1$
if $v$ corresponds to a black vertex (or cube)
and $-1$ if $v$ corresponds to a white vertex. 
\end{definition}

\begin{example}
\label{example:surface_simple}
In Figure \ref{fig:surface_simple},
we see four dimers intersecting the interior of a surface. 
The horizontal dimer completely contained
in the surface will contribute $0$ to the flux.  
\end{example} 

\begin{figure}[ht]%
\centering
\centerline{\includegraphics[scale=0.25]{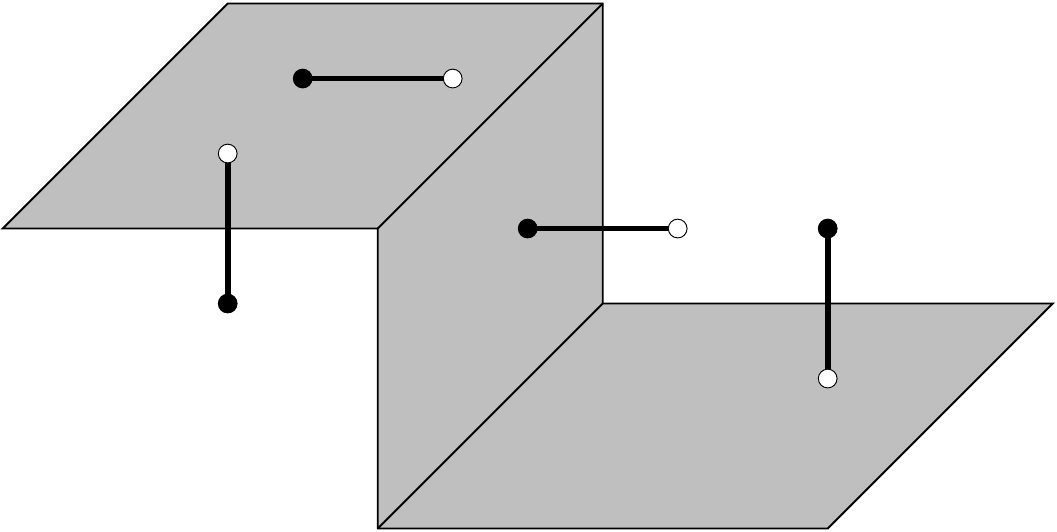}}
\caption{An example of flow through a surface. 
The surface is drawn in $R^\ast$.
Only a few representative dimers have been shown;
one sitting strictly in the interior of the surface,
two above and one below.}
\label{fig:surface_simple}%
\end{figure}

\begin{example}
\label{example:phi}
The first row of Figure \ref{fig:phi} shows two tilings $t_0$ and $t_1$
of the $4\times 4\times 4$ cube.
The difference $t_1 - t_0$ is a cycle of length $12$,
also indicated on the first row.
In the second row we consider two different surfaces
$S_0$ and $S_1$ with $\partial S_0 = \partial S_1 = t_1 - t_0$
(there are many others).
We then compute $\varphi(\cdot)$ for each vertex (cube) in each surface.
In the figure, white bullets indicate $\varphi(\cdot) = -1$
and black bullets indicate $\varphi(\cdot) = +1$.
Notice that in all $4$ cases we have $\phi(t_i,S_j) = -1$:
we shall soon prove that this is not a coincidence.
\end{example}

\begin{figure}[ht]%
\centerline{\includegraphics[scale=0.25]{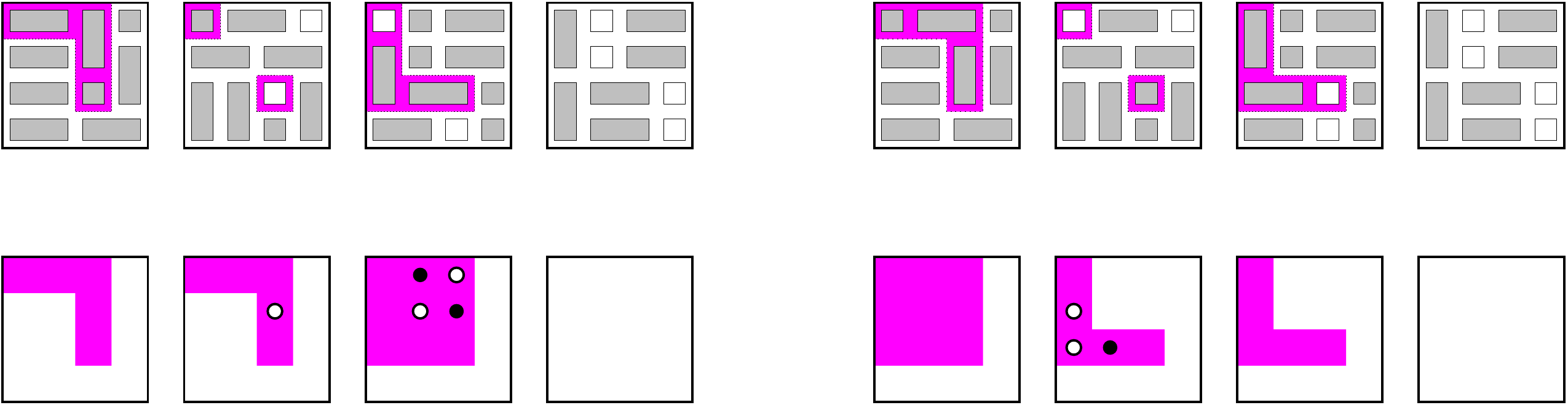}}
\caption{Two tilings $t_0$ and $t_1$ of the $4\times 4\times 4$ cube;
the difference $t_1 - t_0$ is a cycle.
The highlighted cubes are two surfaces $S_0$ and $S_1$ with
$\partial S_0 = \partial S_1 = t_1 - t_0$, drawn in $R$ as sets of cubes.
The bullets indicate contributions $\varphi(\cdot)$
and the color indicates the sign.
We have $\phi(t_i,S_j) = -1$ in all $4$ cases.}
\label{fig:phi}%
\end{figure}

\begin{example}
\label{example:fluxthrough666}
Consider the $6\times 6\times 6$ torus
and the tilings $t_0$ and $t_1$ in Figure \ref{fig:666f1}.

The third floor (i.e., third $6\times 6$ square)
in each tiling is a 2D torus $S_1$,
an embedded discrete surface.
There are $36$ dominoes with one vertex on the torus $S_1$
and the second one above (i.e., on the next floor).
There are $18$ black and $18$ white cubes and therefore
$\phi(t_0;S_1) = \phi(t_1;S_1) = 0$.
Notice that if $\tilde S_1$ is another floor we have
$\phi(t_i;S_1) = \phi(t_i;\tilde S_1)$.

The sixth column (of each floor)
is a 2D torus $S_2$, another embedded discrete surface.
All dominoes are contained in $S_2$ and therefore we have
$\phi(t_0;S_2) = \phi(t_1;S_2) = 0$.

Finally, the sixth row (of each floor)
is another embedded discrete surface $S_3$.
Now there are two dominoes with endpoints not in $S_3$,
one above, one below.
The vertices $v$ for each domino have different colors.
We therefore have
$\phi(t_0;S_3) = \phi(t_1;S_3) = \pm 2$,
with sign depending on the orientation of $S_3$.

The tori $S_1, S_2, S_3$ generate $H_2(R)$.
It follows from Lemma \ref{lemma:homology} below
that we thus know $\phi(t_i;S)$ for any
closed embedded surface $S$
provided we know how to write $[S] \in H_2$
as a linear combination of $[S_1], [S_2], [S_3] \in H_2$.

On the other hand, for the four tilings in Figure \ref{fig:666f0}
we have $\Flux(\cdot) = 0$ and $\phi(\cdot;S) = 0$
for any closed embedded discrete surface $S$.
\end{example}

The next Theorem shows that with the definition above,
the flux of a tiling through the boundary of a manifold is always zero.  

\begin{theo}
\label{thm:euler}
Let $R$ be a cubiculated region and $t$ a tiling of $R$.
Let $\psi: S \to R^\ast$ be an embedded discrete surface
with $\partial S = \varnothing$.
Assume there exists a topological manifold $M_1 \subset R^\ast$
with ${\partial} M_1 = S$. Then, $ \phi(t;S) = 0.$

 \end{theo}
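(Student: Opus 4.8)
The plan is to interpret the flux through $S$ as a signed count of dimers of $t$ crossing the closed surface $S=\partial M_1$, and then show this count is forced to vanish by a parity/coloring argument applied to the cubes of $R^\ast$ lying inside $M_1$. Concretely, for each vertex $v$ of $R^\ast$ in the interior of $M_1$, the unique dimer $d$ of $t$ at $v$ either stays inside $M_1$, stays outside, or crosses $S$; since $\partial S=\emptyset$ and $S$ separates a neighborhood of itself into an ``above'' side and a ``below'' side (here ``above'' is the side pointed to by the normal, which I will arrange to be the side \emph{away from} $M_1$ — this is where the hypothesis $\partial M_1=S$ fixes the orientation conventions), the contribution $\varphi(v;t;S)$ is nonzero exactly for the vertices $v\in M_1$ whose dimer escapes $M_1$ through $S$. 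So $\phi(t;S)=\sum_{d}\ (\pm 1)$ summed over dimers $d$ with exactly one endpoint in $M_1$, the sign being the color of that endpoint.

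The key step is then the following counting identity. Let $B$ and $W$ be the number of black and white cubes of $R$ whose centers are vertices of $R^\ast$ lying in $M_1$ (i.e.\ in the interior, since $\partial M_1=S$ is a surface in $R^\ast$ and $M_1$ is a $3$-manifold). First I would observe that $B=W$: the dual cube of $R^\ast$ around each interior vertex of $R$ has four black and four white corners, and more to the point one can exhibit a perfect matching between the black and white cubes inside $M_1$ — for instance, $M_1$ itself, being a compact $3$-manifold tiled by unit cubes of $R$ with a two-coloring in which adjacent cubes differ, together with the fact that it deformation retracts / its boundary structure forces equal counts. (Cleanest: restrict the \emph{base} tiling $t_{\base}$, or any tiling of $R$, and note every cube inside $M_1$ is matched either to another cube inside $M_1$ or to a cube outside, across $S$; summing colors over all cubes in $M_1$ gives $\ccolor$-balanced pairs inside plus the crossing dimers, forcing the crossing contribution of \emph{any} tiling to equal $-(B-W)$.) Thus $B-W$ is a constant depending only on $M_1$, not on the tiling, and applying the identity to two tilings $t$ and $t_{\base}$ shows $\phi(t;S)-\phi(t_{\base};S)=0$; combined with a direct check that $B=W$ it gives $\phi(t;S)=0$ outright.

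To make this rigorous I would argue as follows: (i) each dimer $d$ of $t$ with both endpoints among cubes of $M_1$ contributes $\ccolor(\text{black end})+\ccolor(\text{white end})=0$ to $\sum_{v\in M_1}\ccolor(v)\cdot[\text{endpoint side}]$ after noting both endpoints are ``on'' $S$ only in degenerate configurations that I must rule out — in fact I should phrase the whole sum directly as $\sum_{v\in M_1}\ccolor(v)=B-W$ \emph{independently} of $t$, by checking case by case (dimer inside $M_1$: the two endpoints cancel; dimer crossing $S$: the single endpoint in $M_1$ survives with its color, which is exactly $\varphi(v;t;S)$ since that endpoint's dimer points out of $M_1$, i.e.\ to the ``above/below'' side opposite to $M_1$; dimer with its $M_1$-endpoint's partner also needing care when the dimer lies \emph{in} $S$ — but an edge of $R^\ast$ lying in $S=\partial M_1$ has exactly one endpoint in the interior of $M_1$ only if... )—so the bookkeeping needs the clean statement $\phi(t;S)=\sum_{v\in M_1}\ccolor(v)$, valid for \emph{all} $t$; (ii) then $\phi(t;S)=B-W$; (iii) finally $B=W$ because $M_1$ is a three-dimensional manifold cubiculated by cubes of $R$ with a proper two-coloring and — this is the genuinely topological input — such a cubiculation of a compact manifold whose boundary is a union of squares of $R^\ast$ has equal black and white counts, which follows from the fact that the coloring is the restriction of the global coloring of $R$ together with the square-lattice structure (equivalently: the boundary of the dual cube complex $M_1^\ast$ pairs them up, or one invokes that $M_1$ admits a tiling at all only if $B=W$, but we must \emph{prove} $B=W$ here, so the honest route is a direct lattice argument summing $(-1)^{x+y+z}$ over integer points, using that $S$ being closed in $R^\ast$ makes $M_1$ a union of full dual cubes).

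The main obstacle I anticipate is precisely step (iii) — establishing $B=W$ at the stated level of generality, where $R$ need not be a box and $M_1$ is an arbitrary compact submanifold of $R^\ast$ with square boundary. In the box case $(-1)^{x+y+z}$ summed over a ``rectangular slab'' telescopes, but for a general $M_1$ one needs that $M_1$ is a union of unit dual cubes of $R^\ast$ (forced by $S=\partial M_1$ being a subcomplex of $R^\ast$) and then a Morse-theoretic or layer-by-layer sweep: intersect $M_1$ with successive planes $\{x_1 = \text{const}\}$, show each slab contributes zero to $B-W$ by a one-dimensional alternating-sum argument along the remaining coordinates, handling the boundary squares of $S$ as the places where layers begin and end. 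A cleaner alternative, which I would try first, is to \emph{avoid} proving $B=W$ directly: apply the identity $\phi(t;S)=\sum_{v\in M_1}\ccolor(v)$ to the restriction to $M_1$ of a tiling of $R$ (it exists — $t$ itself, suitably cut, or rather just reuse that $\sum_{v\in M_1}\ccolor(v)$ is tiling-independent and equals, for the cut-down problem, the flux of a tiling through its own boundary, which vanishes tautologically because there \emph{are} no crossing dimers for an interior tiling). I expect the correct writeup to combine the tiling-independence of $\sum_{v\in M_1}\ccolor(v)$ with a single clean instance where the sum is visibly $0$.
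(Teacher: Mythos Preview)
Your bookkeeping has a genuine gap. You assert that $\varphi(v;t;S)\ne 0$ exactly when $v\in S$ has its dimer escaping $M_1$, and hence identify $\phi(t;S)$ with $\sum_{v\in M_1}\ccolor(v)=B-W$. But $\varphi(v;t;S)$ is equally nonzero, with the \emph{opposite} sign, when $v\in S$ has its dimer pointing \emph{into} the interior of $M_1$. Such a dimer has both endpoints in $M_1$, so it contributes $0$ to your sum $\sum_{v\in M_1}\ccolor(v)$ while contributing $-\ccolor(v)$ to $\phi(t;S)$. Carrying out the case analysis correctly (in the paper's notation, with $b_{\interior},w_{\interior}$ the interior counts and $b_\partial,w_\partial$ the counts on $S$) yields
\[
\phi(t;S)\;=\;2b_{\interior}+b_\partial-2w_{\interior}-w_\partial,
\]
not $B-W$. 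Your observation that the flux is tiling-independent survives and is a good first step, but the quantity that must vanish is this \emph{weighted} count, with interior vertices counted twice. Neither the claim $B=W$ nor a layer-by-layer telescoping of $(-1)^{x+y+z}$ (which is unavailable anyway once $R$ is not a subset of $\RR^3$) addresses it.

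The paper resolves exactly this obstacle: it doubles $M_1$ along $S$ to obtain a closed $3$-manifold $M$, in which interior vertices of $M_1$ appear twice and boundary vertices once, so that the desired identity becomes ``$M$ has equally many black and white vertices''. That, in turn, is extracted from the vanishing of the Euler characteristic of $M$ via an auxiliary cell complex built from tetrahedra on the white vertices. This Euler-characteristic step is the genuine topological input your sketch is missing; a purely combinatorial matching argument of the kind you outline does not seem to reach it at this level of generality.
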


\begin{proof}

We start by showing the following enumerative result.  
Let $b_{\interior}$ and $w_{\interior}$ be
the number of black and white vertices
of $R^\ast$ in the interior of $M_1$;
let $b_{\partial}$ and $w_{\partial}$ be
the number of black and white vertices of $R^\ast$ on $S$. 
Then $2b_{\interior} + b_{\partial} = 2w_{\interior} + w_{\partial}$.

To prove this claim, let $M_2$ be a copy of $M_1$ with reversed orientation.
Glue $M_1$ and $M_2$ along $S$ to define a topological $3$-manifold $M$.
The manifold $M$ is oriented, has empty boundary,
and inherits from $M_1$ and $M_2$ a cell decomposition,
with vertices painted black and white.
Let $f_i(M)$ be the number of faces of dimension $i$
in this cell decomposition.
Since the Euler characteristic of $M$ equals $0$,
\[ f_0(M) - f_1(M) + f_2(M) - f_3(M) = 0. \]
Also, $f_2(M) = 3 f_3(M)$:
since $M$ has no boundary, each face must be shared by exactly two cubes
and each cube has six faces.

Our enumerative claim is equivalent to saying that the number of black vertices
of $M$ equals the number of white vertices.
In order to see this, we first build another $3$-complex $T$ for
the topological manifold $M$ in the following way:
the vertices of $T$ are the white vertices of $M$;
the edges of $T$ are the diagonals that connect white vertices in each
(two-dimensional) face of $M$;
the two-dimensional faces of $T$ are the four triangles
in each cube that form a regular tetrahedron with its four white vertices;
finally, the three-dimensional faces of $T$ come in two flavors:
the regular tetrahedrons inside each cube,
and cells around black vertices of $M$
(these are regular octahedra if the vertex is in the interior
of either $M_i$ but may have some other shape if the vertex belongs to $S$).

We have $f_0(T) = 2w_{\interior} + w_{\partial}$,
$f_1(T) = f_2(M)$ (each face of $M$ contains exactly one edge of $T$),
$f_2(T) = 4 f_3(M)$ (each cube of $M$ contains exactly four faces of $T$)
and
$f_3(T) = f_3(M) + 2b_{\interior} + b_{\partial}$ 
(one tetrahedron inside each cube, one other cell around each black vertex).
Thus
\begin{align*}
0 &= f_0(T) - f_1(T) + f_2(T) - f_3(T) \\
&= (2w_{\interior} + w_{\partial} - 2b_{\interior} - b_{\partial}) +
(3 f_3(M) - f_2(M))
= 2w_{\interior} + w_{\partial} - 2b_{\interior} - b_{\partial}.
\end{align*}

Now, each black vertex on $S$ must match either
a white vertex in the interior of $M_1$, on $\partial M_1 = S$,
or in the exterior of $M_1$:
let $b_1$, $b_2$ and $b_3$ be the number of vertices of each kind
so that $b_1 + b_2 + b_3 = b_{\partial}$.
Define $w_1$, $w_2$ and $w_3$ similarly,
so that $w_1 + w_2 + w_3 = w_{\partial}$.
Clearly, $b_2 = w_2$.
By definition, $\phi(t;S) = b_3 - b_1 - w_3 + w_1$.
Counting vertices in the interior of $M_1$ gives
$b_1 - w_1 = w_{\interior} - b_{\interior}$
so that
$\phi(t;S) = (b_{\partial} - w_{\partial}) - 2(b_1 - w_1) = 0$.
\end{proof}

The remainder of the section develops
that the flux is not dependent on a precise surface,
but can be defined in terms of homology classes.  
If $S_0$ and $S_1$ are oriented surfaces
with $\partial S_0 = \partial S_1$ 
then let $S_1 - S_0$ denote the surface obtained by gluing $S_1$ and $S_0$
along the boundary
and reverting the orientation of $S_0$.
Furthermore, if $\psi_i: S_i \to R$ are continuous maps,
let $\psi_1 - \psi_0: S_1 - S_0 \to R$
denote the map defined by $(\psi_1 - \psi_0)(p) = \psi_i(p)$
if $p \in S_i$.

The image, 
$(\psi_1 - \psi_0)[S_1 - S_0]$, can be seen as  an element of $H_2(R)$; 
the maps $\psi_0$ and $\psi_1$ are  {\it homological}
if $(\psi_1 - \psi_0)[S_1 - S_0] = 0 \in H_2(R)$.
Note that if $S_0$ and $S_1$ are smooth or topological oriented surfaces
with $\partial S_0 = \partial S_1$
and $\psi_i: S_i \to R$ are smooth or topological embeddings
such that there exists a topological manifold $M_1 \subset R$
for which $\psi_1 - \psi_0: S_1 - S_0 \to \partial M_1 \subset R$
is an orientation preserving homeomorphism
then $\psi_0$ and $\psi_1$ are homological.
On the other hand,
if $S_0$ and $S_1$ are smooth or topological oriented surfaces
with $\partial S_0 = \partial S_1$
then the map $\psi_1 - \psi_0: S_1 - S_0 \to R$
is usually not an embedded surface.

\begin{lemma}
\label{lemma:homology}
Let $R$ be a cubiculated region and $t$ be a tiling of $R$.
Let $S_0$ and $S_1$ be oriented surfaces with $\partial S_0 = \partial S_1$.
Let $\psi_i: S_i \to R^\ast$ be embedded discrete surfaces
with $(\psi_0)|_{\partial S_0} = (\psi_1)|_{\partial S_1}$.
If $t$ is tangent to $(\psi_i)|_{\partial S_i}$
and $\psi_0$ and $\psi_1$ are homological 
then $\phi(t,S_0) = \phi(t,S_1)$.
\end{lemma}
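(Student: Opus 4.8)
The plan is to reduce the equality $\phi(t,S_0)=\phi(t,S_1)$ to the vanishing statement of Theorem~\ref{thm:euler} applied to the closed surface $S_1-S_0$. The first thing to observe is that $\phi(t,-)$ is additive with respect to the gluing-and-reversing operation: for each interior vertex $v$ of $R^\ast$, the contribution $\varphi(v;t;S)$ only depends on which side of $S$ (or on $S$) the dimer endpoint at $v$ lands, and reversing the orientation of $S_0$ flips the sign of every such contribution. Hence, at least formally, $\phi(t,S_1)-\phi(t,S_0)=\phi(t,S_1-S_0)$, where $S_1-S_0$ is the closed oriented (discrete) pseudo-surface obtained by gluing along the common boundary. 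The subtlety flagged in the paragraph preceding the lemma is that $\psi_1-\psi_0$ need not be an embedded surface — squares of $S_0$ and $S_1$ may overlap in $R^\ast$, and a vertex $v$ may lie on several sheets — so one cannot directly quote Theorem~\ref{thm:euler}, whose hypothesis is an \emph{embedded} $S$ bounding a manifold $M_1$.

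To get around this I would argue homologically rather than geometrically. By hypothesis $\psi_0$ and $\psi_1$ are homological, i.e.\ $(\psi_1-\psi_0)[S_1-S_0]=0\in H_2(R)$; equivalently the $2$-chain $z:=S_1-S_0\in C_2(R^\ast;\ZZ)$ (sum of squares with signs) is a boundary, $z=\partial w$ for some $w\in C_3(R^\ast;\ZZ)$. The key step is then to reinterpret $\phi(t;S)$, for a tiling $t$ tangent to $\partial S$, as a pairing between the cycle/chain data of $S$ and a fixed $1$-cycle built from $t$. Concretely, for the tiling $t$ one forms, at each interior vertex $v$ of $R^\ast$, the half-dimer vector; assembling these produces (after the tangency hypothesis handles the boundary vertices) a well-defined class that one pairs with $z$ by intersection number. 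The content to verify is that this intersection-number description agrees with the combinatorial $\sum_v \varphi(v;t;S)$ for an embedded $S$ — this is essentially a local computation at each vertex, comparing "above/on/below" with a signed count of how the dimer direction crosses the cochain dual to $S$, weighted by $\ccolor(v)$. Granting that, $\phi(t,S_1)-\phi(t,S_0)$ becomes the intersection of $t$'s dimer-chain with $z=\partial w$, which vanishes because boundaries have zero intersection number with cycles (and the relevant $t$-chain is a cycle once $t$ is tangent to $\partial S_i$, so that its restriction near the support of $z$ is closed).

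Alternatively — and this may be the cleaner route to write down — one can bypass the intersection pairing and mimic the proof of Theorem~\ref{thm:euler} directly. Since $z=\partial w$ with $w=\sum_c w_c\, c$ a $3$-chain (signed sum of cubes of $R^\ast$), one runs the Euler-characteristic/tetrahedron-octahedron counting argument of Theorem~\ref{thm:euler} on $w$ with multiplicities: each cube $c$ appears $|w_c|$ times, oriented by the sign of $w_c$, and the boundary $\partial w = S_1-S_0$ plays the role of the closed surface $S$ there. The counting identity $2b_{\interior}+b_\partial = 2w_{\interior}+w_\partial$ becomes a weighted identity, and the same bookkeeping of black/white vertex matchings yields $\phi(t,\partial w)=0$, hence $\phi(t,S_1)=\phi(t,S_0)$. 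The main obstacle in either approach is the same: carefully setting up the "$\phi$ is additive and extends to $2$-chains / matches the topological intersection number" bridge so that Theorem~\ref{thm:euler} (stated only for an embedded $S$ bounding a manifold) can legitimately be invoked on the possibly non-embedded, multiplicity-carrying chain $S_1-S_0=\partial w$. Once that bridge is in place the vanishing is immediate; I would therefore spend most of the write-up making the chain-level reformulation of $\phi$ precise and checking its compatibility with the vertex-by-vertex definition.
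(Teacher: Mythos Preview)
Your high-level strategy---reduce the lemma to Theorem~\ref{thm:euler} applied to the closed chain $S_1-S_0=\partial w$---is indeed how the paper proceeds, but the step you label ``the bridge'' is the entire content of the proof, and your proposal does not build it. Two concrete issues:

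\medskip
\textbf{(1) Additivity of $\phi$ is false pointwise.} The quantity $\varphi(v;t;S)$ is not a function of the $2$-chain $S$ near $v$; it depends on the local embedded surface structure (which determines ``above/on/below''). When you glue $S_1$ to $-S_0$, or when you compare $S_0,S_1$ to an embedded closed $S=\partial M_1$ built from pieces of both, there is a seam curve $\Gamma$ along which all three surfaces pass. At a vertex $v\in\Gamma$ the paper computes the defect
\[
D(v)\;=\;\varphi(v;t;S_1)-\varphi(v;t;S_0)-\varphi(v;t;S)
\]
and shows by case analysis that $D(v)=-\ccolor(v)$ when the partner of $v$ is off $\Gamma$, and $D(v)=0$ otherwise. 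So $\sum_v D(v)=0$ only because $\Gamma$ is balanced. Your proposal never confronts this: the line ``reversing the orientation flips the sign of every such contribution, hence formally $\phi(t,S_1)-\phi(t,S_0)=\phi(t,S_1-S_0)$'' hides exactly the failure that has to be controlled.

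\medskip
\textbf{(2) Neither alternative you sketch avoids the problem.} The intersection-pairing route requires matching $\sum_v\varphi(v;t;S)$ with an algebraic intersection number, but the convention $\varphi=0$ when the dimer lies \emph{in} $S$ is not a transversal intersection count and must be reconciled with one---that reconciliation is again the defect computation above. The ``Theorem~\ref{thm:euler} with multiplicities'' route is more seriously obstructed: that theorem's proof uses the manifold structure of $M_1$ essentially (doubling $M_1$ along $S$, then the tetrahedron/octahedron complex and $\chi(M)=0$). None of this survives for a $3$-chain with coefficients.

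\medskip
\textbf{What the paper actually does.} It defines a winding number $\omega$ on cubes of $R^\ast$ (counting signed crossings of $\psi_1-\psi_0$) and inducts on $c=|\max\omega|+|\min\omega|$. At each step it peels off the top level set $M_l=\{\omega=l\}$, \emph{refines} so that $M_l$ can be made an honest $3$-manifold with boundary, applies Theorem~\ref{thm:euler} to $\partial M_l$, and then performs the explicit defect bookkeeping on $\Gamma$ described above to pass from $\phi(t;\partial M_l)=0$ to $\phi(t;S_0)=\phi(t;S_2)$, where $S_2$ is the intermediate surface. The refinement step (absent from your plan) is what lets one stay within the manifold hypotheses of Theorem~\ref{thm:euler}; the defect step is what replaces the chain-level additivity you assumed.
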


\begin{example}
\label{example:homologyphi}
The situation in Lemma \ref{lemma:homology}
is illustrated in Example \ref{example:phi} and in Figure \ref{fig:phi}.
Indeed, the surfaces $S_0$ and $S_1$ satisfy $\partial S_0 = \partial S_1$.
Notice that $\phi(t_i,S_0) = \phi(t_i,S_1)$ for $i = 0$ and $i = 1$.
\end{example}

We construct a function $\omega$, the winding number,
taking integer values on vertices of $R$
(and therefore also cubes of $R^\ast$).
Consider $v_0$, $v_1$ vertices of $R$:
we first show how to compute $\omega(v_1) - \omega(v_0)$.
Consider a simple path $\gamma$ along edges of $R$ going from $v_0$ to $v_1$
(such a path exists since $R$ is assumed to be connected).
Count intersections of $\gamma$ with (the image of) $\psi_1 - \psi_0$.
Notice that $\gamma$ intersects $\psi_1 - \psi_0$
at the centers of oriented squares:
each intersection counts as $+1$ (resp. $-1$)
if the tangent vector to $\gamma$ coincides (resp. or not)
with the normal vector to the square in $\psi_1 - \psi_0$.
This total is $\omega(v_1) - \omega(v_0)$.

Notice that the value of $\omega(v_1) - \omega(v_0)$ does not depend
on the choice of the path $\gamma$.
Indeed, take two such paths $\gamma_0$ and $\gamma_1$
and concatenate them to obtain a closed path $\gamma_1 - \gamma_0$.
Counting intersections with $\gamma_1 - \gamma_0$
as described in the previous paragraph
defined a linear map from $C_2(R^\ast)$ to $\ZZ$
and therefore an element of $C^2(R^\ast)$
which is easily seen to be in $Z^2(R^\ast)$.
Since $\psi_1 - \psi_0$ is assumed to belong to $B_2(R^\ast)$
their product must equal $0$,
yielding independence from the path.
A similar argument shows that if $v_0$ and $v_1$
both belong to the boundary $\partial R$
then counting intersections with $\gamma$ also defines
an element of $Z^2(R^\ast)$ and therefore $\omega(v_1) - \omega(v_0) = 0$.
We may therefore define $w$ so that if $v \in \partial R$ then $\omega(v) = 0$;
if $\partial R = \varnothing$ we have a degree of freedom here
and we choose $w$ so that it assumes the value $0$ somewhere.

\begin{proof}[Proof of Lemma~\ref{lemma:homology}]
Our proof works by induction of $c = |\max \omega| + |\min \omega|$.
If $c = 0$ then the surfaces $\psi_0$ and $\psi_1$
coincide and we are done.
Let us consider the case $c = 1$;
without loss of generality, $\omega$ assumes the values $0$ and $1$.
Let $M_1 \subset R^\ast$ be the union of closed cubes
centered at vertices $v$ (of $R$) with $\omega(v) = 1$.
We would like $M_1$ to be a 3D manifold with boundary.
Unfortunately, that is not guaranteed.
But this can easily be fixed.
Start by refining $R$ (and the surfaces),
so that now $M_1$ is a union of $5\times 5\times 5$ blocks of cubes.
If $M_1$ is \emph{not} a manifold, this means there are
bad edges (two alternate blocks present, two absent) or
bad vertices (more than one undesirable pattern).
First fix the vertices by adding extra cubes; then the edges.
This corresponds to constructing a chain of auxiliary surfaces
\[ \tilde\psi_0 = \psi_0, \tilde\psi_1, \tilde\psi_2, \ldots,
\tilde\psi_{N-1}, \tilde\psi_N = \psi_1 \]
such that for any $k$ the pair $\tilde\psi_{k-1}$, $\tilde\psi_k$
satisfies $c = 1$ and $M_1$ a manifold.

\begin{figure}[h]
\centerline{\includegraphics[width=2in]{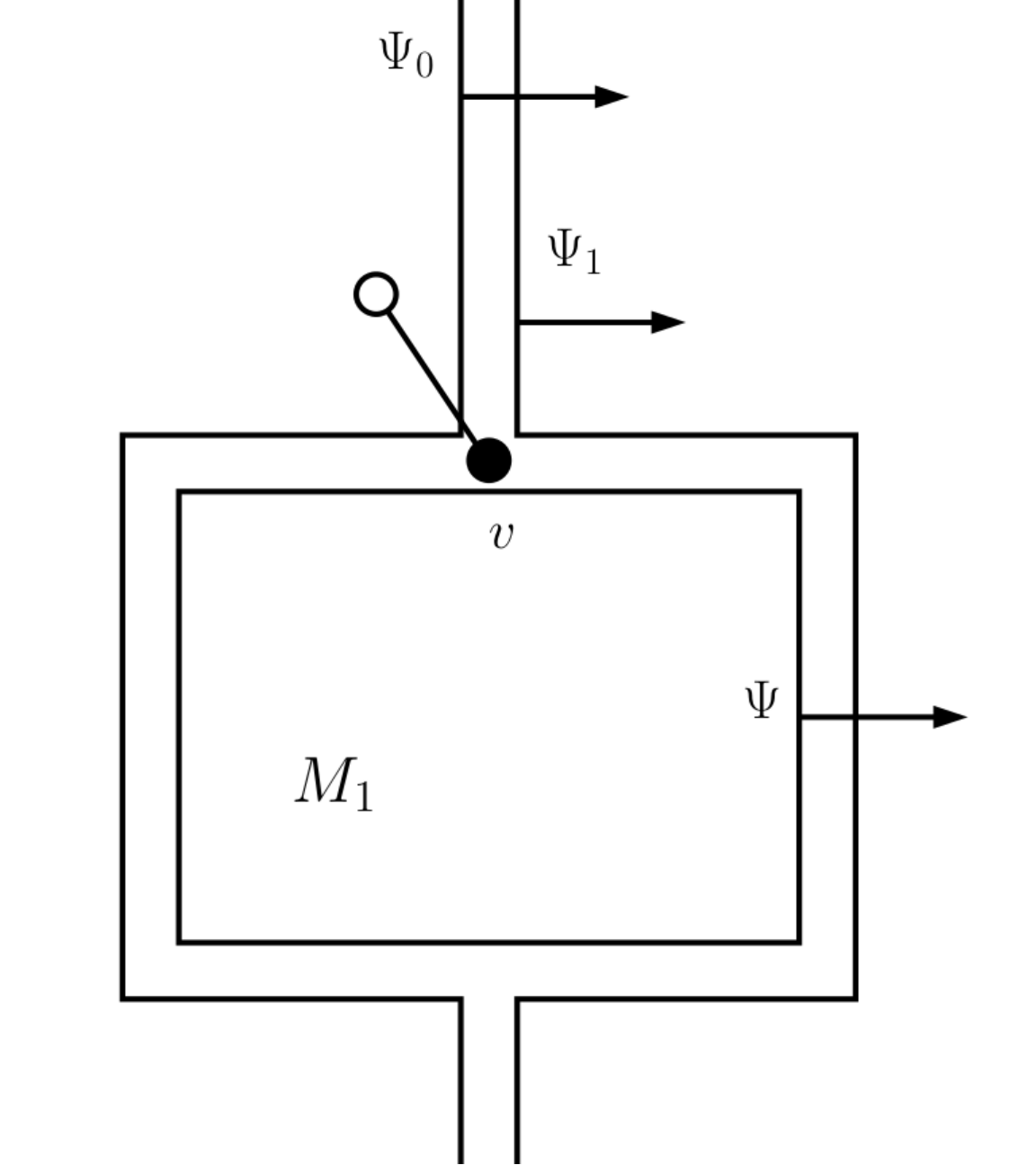}}
\caption{Auxiliary surfaces as in the proof of Lemma~\ref{lemma:homology}.}
\label{fig:X} 
  \end{figure}

We are therefore left with the case where
$\psi_1 - \psi_0$ is the boundary of a manifold $M_1$.
It now appears that this case follows from Theorem \ref{thm:euler}.
This is true but not as trivial as it may seem at first.
As in Figure~\ref{fig:X}, let $S = \partial M_1$
and $\psi$ be a parameterization of $S$.
What Theorem \ref{thm:euler} tells us is that $\phi(t;S) = 0$ but
what we need is that $\phi(t;S_1) - \phi(t;S_0) = 0$.
In other words, we need
\begin{equation}
\label{eq:Dv}
\sum_v D(v) = 0; \qquad
D(v) := \varphi(v;t;S_1) - \varphi(v;t;S_0) - \varphi(v;t;S). 
\end{equation}
If $v$ belongs to at most two surfaces then $D(v) = 0$.
On the other hand, for $v$ as in Figure~\ref{fig:X}, $D(v) = -1$.
Let $\Gamma$ be the curve along which the three surfaces
$S_0$, $S_1$ and $S$ meet.
A case by case analysis shows that for $v \in \Gamma$
we have $D(v) = 0$ if $v$'s partner also belongs to $\Gamma$
and $D(v) = -\ccolor(v)$ otherwise.
Since $\Gamma$ is balanced this proves equation \ref{eq:Dv}
and completes the proof of this case.

The general inductive step is now similar.
Otherwise assume without loss of generality that $\max \omega = l > 0$.
Let $M_l$ be the union of cubes of $R^\ast$
with center $v$ with $\omega(v) = l$.
As above, we may assume $M_l$ to be a 3D manifold.
Thus, $M_l$ is a 3D manifold and its boundary $\partial M_l$
consists of subsets of $\psi_0$ and $\psi_1$ meeting at curves
(in the simplest example, $M_l$ is a ball,
the two subsets are disks meeting at a circle;
this may get significantly more complicated but does not affect our argument).
Modify $\psi_0$ to define $\psi_2$ by discarding
the subset of $\psi_0$ in $\partial M_l$ and
attaching instead the subset of $\psi_1$ also in $\partial M_l$.
The two surfaces $\psi_0$ and $\psi_2$
are homological by construction and
the difference $\psi_2 - \psi_0$ is the boundary of $M_1$.
The case $c = 1$ above shows that $\phi(t,S_0) = \phi(t,S_2)$.
For the new winding number $\tilde \omega$
(defined for the pair $\psi_1$ and $\psi_2$)
we have $\tilde c = c - 1$:
by induction we are done.
\end{proof}

\begin{lemma}
\label{lemma:h2}
Consider a cubiculated region $R$, a tiling $t$ of $R$, and 
 an element $a \in H_2(R;\ZZ)$.
\begin{enumerate}
\item{There exist a nonnegative integer $k$ and an embedded discrete surface
$\psi: S \to R^{(k)\ast}$ such that $\partial S = \varnothing$
and $\psi[S] = a$.}
\item{Let $k_0$ and $k_1$ be nonnegative integers.
Let $\psi_0: S_0 \to R^{(k_0) \ast}$
and $\psi_1: S_1 \to R^{(k_1) \ast}$
be embedded discrete surfaces such that
$\partial S_0 = \partial S_1 = \varnothing$.
Assume that $\psi_0[S] = \psi_1[S] = a$.
Then $\phi(t^{(k_0)};S_0) = \phi(t^{(k_1)};S_1)$. }
\end{enumerate}
\end{lemma}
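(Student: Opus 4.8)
The plan is to treat the two parts separately, reusing the smoothing-and-discretization technique of Lemma~\ref{lemma:existseifert} for part~(1) and the winding-number argument of Lemma~\ref{lemma:homology} for part~(2).

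For part~(1), I would first realize $a$ by a \emph{smooth} closed oriented surface in the interior of $R$. By Lefschetz duality $H_2(R;\ZZ)\cong H^1(R,\partial R;\ZZ)$, and a class in $H^1(R,\partial R;\ZZ)$ is represented by a smooth map $f\colon R\to\Ss^1$ that is constant on $\partial R$; for a regular value $p\in\Ss^1$ away from $f(\partial R)$, the preimage $\Sigma=f^{-1}(p)$ is a smooth closed embedded oriented surface contained in $\interior R$ whose homology class is $a$. (When $\partial R=\emptyset$ one argues identically with $H_2(R;\ZZ)\cong H^1(R;\ZZ)$.) I would then discretize $\Sigma$ exactly as in the proof of Lemma~\ref{lemma:existseifert}: bound the sectional curvatures of $\Sigma$, refine $R$ enough times that every radius of curvature exceeds twice the cube diagonal, classify each cube near $\Sigma$ as \emph{above} or \emph{below} according to the orientation of $\Sigma$ and the measure of the cube on each side, and let $\psi[S]$ be the interface separating the two classes. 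The curvature estimate makes $S$ an embedded discrete surface, and since $S$ lies within a thin tubular neighbourhood of $\Sigma$ it is isotopic to $\Sigma$ inside $R$, hence represents $a$; this proves~(1).

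For part~(2), I would first reduce to the case $k_0=k_1$. Assuming $k_0\le k_1$, replace $S_0$ by its iterated refinement inside $R^{(k_1)\ast}$ and verify that $\phi$ is unchanged, i.e.\ that $\phi(t^{(k)};S)=\phi(t^{(k+1)};S')$ whenever $S'$ is the refinement of a closed discrete surface $S$; this is a routine local computation near each cube met by $S$, using that each refined domino is parallel to the original. Once both surfaces live in the common refinement $R^{(k)\ast}$ with $k=k_1$, they are closed, so tangency at the boundary is automatic, and the hypothesis $\psi_0[S_0]=\psi_1[S_1]=a$ says precisely that $\psi_0$ and $\psi_1$ are homological in the sense of Section~\ref{sec:fluxthroughsurfaces}: here $S_1-S_0$ is the disjoint union of $S_1$ and $S_0$ with reversed orientation, and $(\psi_1-\psi_0)[S_1-S_0]=a-a=0\in H_2(R)$. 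Lemma~\ref{lemma:homology} then gives $\phi(t^{(k)};S_0)=\phi(t^{(k)};S_1)$, which together with the refinement invariance yields $\phi(t^{(k_0)};S_0)=\phi(t^{(k_1)};S_1)$.

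The step I expect to be the main obstacle is this refinement invariance in part~(2): because Lemma~\ref{lemma:homology} is stated for a single fixed region, comparing fluxes computed at different refinement levels forces a passage through a common refinement, and one must check --- by a case analysis on the local configuration of a cube meeting $S$ --- that refining $S$ together with the tiling leaves $\phi$ unchanged. The discretization in part~(1), by contrast, is essentially the one already performed in Lemma~\ref{lemma:existseifert} and raises nothing new.
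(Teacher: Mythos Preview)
Your approach is essentially the paper's: for part~(1) realize $a$ by a smooth closed embedded surface and discretize via the curvature argument of Lemma~\ref{lemma:existseifert}, and for part~(2) invoke Lemma~\ref{lemma:homology}. The paper's proof is much terser and does not isolate the passage to a common refinement that you flag---it simply asserts that item~(2) follows from Lemma~\ref{lemma:homology}, implicitly leaning on the refinement conventions discussed in Section~\ref{sec:surface}; your explicit reduction is a reasonable way to fill that in.
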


\begin{proof}
As in the proof of Lemma~\ref{lemma:existseifert}, for any $a \in H_2(R;\ZZ)$, there exists a smooth embedded surface
$\psi_{\infty}: S \to R$ with $\psi_{\infty}[S] = a$
(where $S$ is a smooth closed surface with $\partial S = \varnothing$).
For sufficiently large $k$ there is an embedded discrete surface
$\psi: S \to R$ approximating $\psi_{\infty}$
so that $\psi[S] = a$. 
The equality in item (2) follows from Lemma \ref{lemma:homology}.
\end{proof}

Using this lemma, for $a \in H_2(R;\ZZ)$, define $\phi(t;a)$ to be equal to
$\phi(t^{(k)};S)$ for any embedded discrete surface
$\psi: S \to (R^{(k)})^\ast$ such that $\psi[S] = a$ (as an element of $H_2(R;\mathbb{Z}))$.

Note that if $t_0$ and $t_1$ differ by a flip or trit
then 
$\phi(t_0;a) = \phi(t_1;a)$.

\begin{lemma}
\label{lemma:phi}
Consider a cubiculated region $R$ and tilings $t_0, t_1$ of $R$.
If $\Flux(t_0) = \Flux(t_1)$
then $\phi(t_0;a) = \phi(t_1;a)$ for all $a \in H_2(R;\ZZ)$.
\end{lemma}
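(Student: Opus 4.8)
The plan is to reduce Lemma~\ref{lemma:phi} to Lemma~\ref{lemma:homology}, which already tells us that $\phi(t;a)$ depends only on the homology class $a$ and on $t$, by producing \emph{a single surface} $S$ representing $a$ which is simultaneously well-adapted to both $t_0$ and $t_1$. Concretely, $\Flux(t_0) = \Flux(t_1)$ means $[t_1 - t_0] = 0 \in H_1(R;\ZZ)$, so by Lemma~\ref{lemma:existseifert} there is a $k$ and a discrete Seifert surface $\Sigma$ in $R^{(k)\ast}$ for the pair $(t_0, t_1)$, i.e.\ $\partial\Sigma$ is the system of nontrivial cycles of $t_1^{(k)} - t_0^{(k)}$. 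The rough idea is that $t_0$ and $t_1$ should agree ``away from $\Sigma$'', and since a closed surface representing $a$ can be pushed off $\Sigma$, it cannot see the difference between the two tilings.

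First I would refine enough times that, in addition to $\Sigma$ existing, a closed embedded discrete surface $S$ with $\psi[S] = a$ exists in the same $R^{(k)\ast}$ (using Lemma~\ref{lemma:h2}(1)), and so that $\phi(t_i;a) = \phi(t_i^{(k)};S)$ for $i = 0,1$ (refinement does not change $\phi(\cdot;a)$, since a flip or trit does not, and refinement is realized by flips near the relevant cycles). Next, after one more refinement if needed, I would use a transversality/general-position argument in the cubical complex: since $S$ is closed and $a$ is a homology class, $S$ may be replaced by a homologous closed discrete surface $S'$ (homologous as an element of $H_2$, hence giving the same $\phi$ by Lemma~\ref{lemma:homology}) that is disjoint from the subcomplex $\Sigma$ in the refined complex. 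The point of refinement here is the same as elsewhere in the paper: a generic small perturbation of $S$ off the lower-dimensional obstruction $\Sigma$ can be carried out discretely once cubes are small enough, analogous to the approximation step in the proof of Lemma~\ref{lemma:existseifert}.

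Now, with $S'$ disjoint from $\Sigma$, the key claim is that $\varphi(v; t_0^{(k)}; S') = \varphi(v; t_1^{(k)}; S')$ for every vertex $v$ of $R^{(k)\ast}$ lying on $S'$. Indeed, the dimer of $t_i$ at $v$ enters into $\varphi(v;t_i;S')$ only through which side of $S'$ (or on $S'$) its far endpoint lies. At vertices $v$ of $S'$ where $t_0$ and $t_1$ use the same dimer, the two contributions are literally equal. At vertices where they differ, $v$ is an endpoint of a nontrivial cycle of $t_1^{(k)} - t_0^{(k)}$, hence $v$ lies on $\partial\Sigma \subset \Sigma$; but $S'$ is disjoint from $\Sigma$, so no such $v$ lies on $S'$. (One must also handle trivial $2$-cycles of $t_1 - t_0$, where $t_0$ and $t_1$ differ by flipping a single slab — there the two dimers at such $v$ point to endpoints that are reflections of each other across the slab, and one checks directly that their signed contributions through any surface disjoint from that slab agree; alternatively absorb trivial cycles into $\Sigma$ by a slightly fattened surface.) Summing over $v \in S'$ gives $\phi(t_0^{(k)};S') = \phi(t_1^{(k)};S')$, and since $\psi'[S'] = a$, Lemma~\ref{lemma:h2}(2) (equivalently Lemma~\ref{lemma:homology}) identifies both sides with $\phi(t_0;a)$ and $\phi(t_1;a)$ respectively, completing the proof.

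The main obstacle I expect is the disjointness step: rigorously producing, inside a cubical complex after finitely many refinements, a closed discrete surface homologous to $S$ that avoids the given discrete surface $\Sigma$ (which has nonempty intersection with it only along edges and vertices, but possibly in a complicated pattern). This is a discrete transversality statement; the paper's recurring device of refining until curvature/size estimates kick in should make it go through, but one has to be careful that the push-off is achieved by a homology (so $\phi$ is unchanged) rather than merely an ambient isotopy, and that the manifold-structure hypotheses needed to invoke Lemma~\ref{lemma:homology} (existence of the intermediate manifolds $M_l$) are met — exactly the kind of fix-the-vertices-then-the-edges argument used in the proof of Lemma~\ref{lemma:homology}.
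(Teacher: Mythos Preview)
Your approach is genuinely different from the paper's, and considerably more geometric. The paper's proof is essentially three lines: for a fixed closed discrete surface $S$, the assignment $t \mapsto \phi(t;S)$ extends to a $\ZZ$-linear functional on $C_1(R^\ast;\ZZ)$ (sum the signed above/on/below contributions edge by edge), and this functional kills boundaries of squares --- the local computation is exactly the statement that a single flip does not change flux through a closed surface. Hence $\phi(t_1;S)-\phi(t_0;S)$ depends only on $[t_1-t_0]\in H_1(R;\ZZ)$, which vanishes by hypothesis. No Seifert surface, no transversality.

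There is a real gap at exactly the point you flag. You ask for a closed discrete surface $S'$ homologous to $S$ and \emph{disjoint from the Seifert surface} $\Sigma$. Two $2$-surfaces in a $3$-manifold generically meet along curves; no amount of refinement or perturbation will separate them. What your argument actually needs (and what you in fact invoke: ``$v\in\partial\Sigma$'') is only that $S'$ contain no vertex of the curve $\gamma=\partial\Sigma=t_1-t_0$. That weaker disjointness is attainable, but still not by a generic-position argument: a closed surface and a null-homologous curve can meet geometrically even though their algebraic intersection is zero. One must cancel crossings in pairs, e.g.\ by surgering $S'$ along arcs of $S'\cap\Sigma$ running out to $\partial\Sigma$ (boundary compressions guided by the Seifert surface), and then discretize. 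This is doable after refinement, but it is a substantive step you have not supplied --- and once you carry it out you have essentially reproved, in a roundabout way, the linearity the paper uses directly. (Aside: your concern about trivial $2$-cycles is misplaced. Those are exactly the vertices where $t_0$ and $t_1$ share the same dimer, so $\varphi(v;t_0;S')=\varphi(v;t_1;S')$ automatically; there is nothing to handle.)
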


\begin{proof}
Let $S$ be a discrete embedded surface in some refinement $R^{(k)}$
with $\partial S = \varnothing$.
Adding $\phi(t_0;S) + \phi(t_1;S)$ 
gives a linear map from
$C_1(R^\ast;\ZZ)$ to $\ZZ$.
Boundaries of squares are taken to $0$ and therefore
$B_1(R^\ast;\ZZ)$ is contained in the kernel of this map.
Hence we have a map from $H_1(R^\ast;\ZZ)$ to $\ZZ$.
By hypothesis $[t_1 - t_0] = 0 \in H_1$
and therefore $t_1$ and $t_0$ are taken to the same number.
In other words, $\phi(t_0;S) = \phi(t_1;S)$.
Since this holds for all $S$, $\phi(t_0;a) = \phi(t_1;a)$.
\end{proof}

The converse does not hold.
For example, let $\cL$ be spanned by the vector $(0,0,4)$;
let $R \subset (\RR^3 / \cL)$ be the set of points $(x,y,z)$
for which $0 \le x, y \le 4$.
Then, one can construct tilings of $R$ with different values of the Flux
but $H_2(R) = 0$ and therefore $\phi(t;a)$ is always trivial.

Most importantly, it follows from Lemma~\ref{lemma:phi} that the next
definition is sound and $m$ can be seen as a function of the
$\Flux(t)$ rather than $t$.

\begin{definition}
\label{definition:modulus}
Define the \emph{modulus}  of a tiling as
\[ m = \mu(\Flux(t)) = \gcd_{a \in H_2} \phi(t;a); \]
so that for all $a \in H_2$ we have $\phi(t;a) \equiv 0 \pmod {m}$.
\end{definition}

\begin{example}
\label{example:modulus}
Let $R$ be the $6\times 6 \times 6$ torus.
The four tilings in Figure \ref{fig:666f0} have Flux $0$
and for them $m = 0$.

The two tilings in Figure \ref{fig:666f1} have Flux $(1,0,0)$.
As we saw in Example \ref{example:fluxthrough666},
the torus $S_i$ ($i \in \{1,2,3\}$)
corresponds to $a_i \in H_2$;
the elements $a_1, a_2, a_3$ generate $H_2$.
We have $\phi(t_i;a_1) = \phi(t_i;a_2) = 0$ and
$\phi(t_i;a_3) = 2$ and therefore $m = 2$.
\end{example}

\section{Twist}
\label{sec:twist}

In this section, we define our second topological invariant of tilings,
the twist.  The twist was first introduced in~\cite{segundoartigo}. 
There it has a simple combinatorial definition but the
construction involved is not well defined
at the level of generality of  this paper.
Here we give an alternate  definition of twist involving
embedded surfaces. 
If the Flux is zero, the twist assumes values in $\ZZ$. 
Otherwise, the twist assumes values in $\ZZ/m\ZZ$
where $m$ is the modulus of the tiling, 
as defined at the end of the previous section.
Intuitively,
the twist records how ``twisted'' a tiling is by trits;
the value of the twist changes by exactly $1$ after a trit move.

We start by defining the \emph{flux around a curve}.    
Consider a cubiculated region $R$,
a tiling $t$ of $R$, and $m = \mu(\Flux(t))$.
If $\gamma: \Ss^1 \to R^\ast$ is an embedded cycle 
such that $t$ is tangent to $\gamma$,
then Lemmas~\ref{lemma:existseifert} and~\ref{lemma:phi}  imply that 
there exists a nonnegative integer $k$ and an embedded surface
$\psi: S \to R^{(k) \ast}$ such that
$\psi|_{\partial S} = \gamma^{(k)}$. 
Furthermore, if $k_0$ and $k_1$ are nonnegative integers
and $\psi_0: S_0 \to R^{(k_0) \ast}$
and $\psi_1: S_1 \to R^{(k_1) \ast}$
are embedded surfaces such that
$(\psi_i)|_{\partial S_i} = \gamma^{(k_i)}$, 
then
$\phi(t^{(k_0;\gamma)};S_0) = \phi(t^{(k_1;\gamma)};S_1)$
(as elements of $\mathbb{Z}/m\ZZ$).
These observations allow us to
define the flux of a tiling $t$ around a curve $\gamma$.
Notice that in the definition below
we use the modified refinement $t^{(k;\gamma)}$,
as in Lemma \ref{lem:refineflux} and Figure \ref{fig:refine}.

\begin{definition}
\label{definition:fluxaroundcurve}
For a tiling $t$ and a curve $\gamma$, define $\phi(t;\gamma) \in \ZZ/m\ZZ$,
the \emph{flux of $t$ around $\gamma$}, to be
$$  \phi(t;\gamma) :=   \phi(t^{(k;\gamma)};S) \in \ZZ/m\ZZ$$
for any surface $\psi: S \to R^{(k) \ast}$
such that $\psi|_{\partial S} = \gamma^{(k)}$.
\end{definition}

\goodbreak

Using the flux of a tiling around a curve, we may define our first
notion of twist; the twist for a pair of tilings.

\begin{definition}
Let $R$ be a cubiculated region.
Let tilings $t_0$ and $t_1$ be two tilings of $R$
such that $\Flux(t_0) = \Flux(t_1)$. 
Then the twist of $t_1$ with respect to $t_0$  is defined as
\begin{equation*}
\label{eq:dtwist}
\TW(t_1;t_0) := \phi(t_1;t_1-t_0) = \phi(t_0;t_1-t_0)  \in \ZZ/m\ZZ. 
\end{equation*}
\end{definition}

\begin{example}
\label{example:TWphi}
In Example \ref{example:phi} we have $\phi(t_1,S_0) = -1$
and therefore $\TW(t_1;t_0) = \phi(t_1;t_1-t_0) = \phi(t_1,S_0) = -1$.
\end{example}

Our larger goal is to define the twist of a single tiling, $\Tw(t)$.
In particular, the twist should satisfy
$\TW(t_1;t_0) = \Tw(t_1) - \Tw(t_0)$  
so that 
$\Tw(t)$ can be defined using a base tiling and the twist of a pair.
To this end, first consider the result of a flip move.
\begin{prop}
\label{prop:fliptwist}
Let $R$ be a cubiculated region.
Let $t_0$, $t_1$ and $t_2$ be tilings of $R$
such that $\Flux(t_0) = \Flux(t_1)$  and
$t_1$ and $t_2$ differ by a flip. Then
\[ \TW(t_1;t_2) = 0 \qquad and \qquad \TW(t_1;t_0) = \TW(t_2;t_0). \]
\end{prop}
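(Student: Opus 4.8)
The plan is to establish the two assertions in turn, with the first being the essential computational input and the second following formally. For the claim $\TW(t_1;t_2) = 0$: since $t_1$ and $t_2$ differ by a flip, the system of cycles $t_1 - t_2$ consists (up to trivial cycles) of a single nontrivial cycle $\gamma$ of length four, namely the boundary of the unit square $Q$ where the flip takes place. By definition, $\TW(t_1;t_2) = \phi(t_1;\gamma) = \phi(t_1^{(k;\gamma)};S)$ for any embedded discrete surface $S$ with $\psi|_{\partial S} = \gamma^{(k)}$. The natural choice is to take $S$ to be (a refinement of) the square $Q$ itself. After refining and performing the local flips near $\gamma$ to obtain a tiling tangent to $\gamma^{(k)}$, the only dimers meeting the interior vertices of $S$ are the $25$ dimers of $t_1^{(k;\gamma)}$ lying inside the refined square $Q^{(k)}$, and these are all contained in $S$. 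Hence every interior vertex contributes $\varphi(v;t_1^{(k;\gamma)};S) = 0$, so $\phi(t_1^{(k;\gamma)};S) = 0$, giving $\TW(t_1;t_2) = 0$. The one subtlety to address is that the flips used to make the refinement tangent to $\gamma$ must not themselves move dimers across $S$; this is exactly the content of the ``modified refinement'' construction $t^{(k;\gamma)}$ from the proof of Lemma~\ref{lem:refineflux}, applied here with $\gamma$ the boundary of the flip square, so I would invoke that construction and note the local flips near $\gamma$ stay within $S$.

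For the second assertion, $\TW(t_1;t_0) = \TW(t_2;t_0)$, I would first check that the hypotheses make all three quantities well-defined: we are given $\Flux(t_0) = \Flux(t_1)$, and since a flip preserves Flux (Proposition in Section~\ref{sec:Flux}) we also have $\Flux(t_1) = \Flux(t_2)$, hence $\Flux(t_0) = \Flux(t_2)$; moreover $m = \mu(\Flux(t_0))$ is common to all three pairs since $\mu$ depends only on the Flux. Then the identity should reduce to additivity of the twist of a pair, i.e. $\TW(t_1;t_0) = \TW(t_1;t_2) + \TW(t_2;t_0)$, combined with the first part $\TW(t_1;t_2) = 0$. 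So the real content is the cocycle identity $\TW(t_1;t_0) = \TW(t_1;t_2) + \TW(t_2;t_0)$ for three tilings of common Flux.

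To prove this additivity, I would argue as follows. Pass to a common refinement $R^{(k)}$ large enough that Seifert surfaces $S_{10}$ for $(t_0,t_1)$, $S_{12}$ for $(t_2,t_1)$, and $S_{20}$ for $(t_0,t_2)$ all exist in $R^{(k)\ast}$ (Lemma~\ref{lemma:existseifert}), and arrange the refined tilings to be tangent to the relevant boundary cycles. Using the definition $\TW(t_1;t_0) = \phi(t_1; t_1 - t_0)$ and the fact (Lemma~\ref{lemma:homology}) that $\phi(t;\cdot)$ depends only on the homology class of the surface relative to its boundary, one sees that $\phi(t_1; t_1-t_0)$, $\phi(t_1; t_1-t_2)$, and $\phi(t_1; t_2-t_0)$ are computed by surfaces whose boundaries satisfy $(t_1-t_0) = (t_1-t_2) + (t_2-t_0)$ as chains; gluing representative Seifert surfaces along matching boundary arcs and applying the additivity of the flux count $\phi(t_1;\cdot)$ over a disjoint/glued surface yields $\phi(t_1;t_1-t_0) = \phi(t_1;t_1-t_2) + \phi(t_1;t_2-t_0)$, all in $\ZZ/m\ZZ$. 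Finally, by the equality $\phi(t_1;t_1-t_2) = \phi(t_2;t_1-t_2)$ built into the definition of $\TW$, the middle term is $\TW(t_1;t_2)$ and the last is $\TW(t_2;t_0)$, completing the identity.

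The main obstacle I anticipate is the gluing step in the last paragraph: one must check that the three Seifert surfaces can be chosen with compatible boundary behavior so that $\phi(t_1;\cdot)$ is genuinely additive under concatenation — in particular, bookkeeping the contributions of vertices lying on the curves where surfaces meet, exactly the kind of balanced-curve $D(v)$ analysis that appears in the proof of Lemma~\ref{lemma:homology}. I expect this to be routine but technical; everything else is definitional, and the first assertion $\TW(t_1;t_2)=0$ is the clean computational heart of the proposition.
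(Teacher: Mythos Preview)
Your treatment of the first equation is essentially the paper's: take the unit square $Q$ bounding the flip as the Seifert surface. You overcomplicate it, though. The unrefined square $Q$ already has \emph{no} interior vertices (its four vertices all lie on $\partial Q$), so the sum defining $\phi(t_1;Q)$ is empty and equals $0$; no refinement and no discussion of tangency is needed.

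For the second equation your route diverges from the paper's, and there is a genuine gap. You attempt to derive $\TW(t_1;t_0)=\TW(t_2;t_0)$ from the general cocycle identity $\TW(t_1;t_0)=\TW(t_1;t_2)+\TW(t_2;t_0)$. In the paper this identity is Proposition~\ref{prop:generaladditivetwist}, and its proof \emph{uses} Proposition~\ref{prop:fliptwist} (to perturb cycles by flips into disjoint position before applying the linking-number computation). So invoking it here would be circular. Your proposed direct proof of additivity also breaks at a specific step: from
\[
\phi(t_1;t_1-t_0)=\phi(t_1;t_1-t_2)+\phi(t_1;t_2-t_0)
\]
you identify the last term with $\TW(t_2;t_0)$. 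But by definition $\TW(t_2;t_0)=\phi(t_2;t_2-t_0)=\phi(t_0;t_2-t_0)$, and there is no reason given that $\phi(t_1;t_2-t_0)$ equals either of these. The symmetry $\phi(t_a;t_a-t_b)=\phi(t_b;t_a-t_b)$ built into the definition of $\TW$ holds precisely because $t_a$ and $t_b$ agree off the boundary cycle $t_a-t_b$; here $t_1$ may differ from $t_0,t_2$ at interior vertices of any Seifert surface for $t_2-t_0$ (namely at the flip square), so $\phi(t_1;S_{20})$ need not equal $\phi(t_2;S_{20})$. This is not a bookkeeping detail of the $D(v)$ type you anticipate; it is exactly where the linking-number correction $2\Link(\gamma_1;\gamma_2)$ in the paper's discussion enters.

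The paper avoids all of this by arguing directly: when the flip square is disjoint from $t_1-t_0$, take a Seifert surface $S_1$ for $t_1-t_0$ disjoint from the flip and let $S_2$ be $S_1$ together with the single square $Q$; then $\phi(t_1;S_1)=\phi(t_2;S_2)$ is immediate. The remaining cases (the flip square sharing edges with the cycles of $t_1-t_0$) are handled by a short case analysis producing explicit surfaces. This is more elementary and, crucially, is what makes the later general additivity available.
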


\begin{proof}
For the first equation simply take the surface for $t_1 - t_2$ 
to be a single square.

For the second equation, first consider the case where the flip
$t_2-t_1$ is disjoint from the system of cycles $t_1-t_0$.
Then we may take a surface $S_1$ for $t_1 - t_0$
which is also disjoint from the flip $t_2-t_1$.
Take $S_2$ to be the disjoint union of $S_1$
with the square with boundary $t_2 - t_1$.
For the general case, we have to consider the position
of the single square with respect to the system of cycles $t_1-t_0$.
A case by case analysis shows that suitable surfaces can always be constructed.
\end{proof}

\begin{example}
\label{example:twist666}
Let $R$ be the $6\times 6\times 6$ torus.
Let $t_0 = t_\basetiling$, $t_1$, $t_2$ and $t_3$
be as in Figure \ref{fig:666f0}.
Recall from Example \ref{example:modulus} that 
the modulus for these tilings is $m = 0$.
We have $\TW(t_1,t_2) = +2 \ne 0 \in \ZZ$:
it follows from Proposition \ref{prop:fliptwist}
that $t_1$ and $t_2$ are not connected to each other (or to $t_0$)
by a sequence of flips.
A similar computation verifies that $\TW(t_3,t_0) = 0$.
Indeed, there exists a sequence of flips 
joining $t_0$ and $t_3$.
Experiments indicate that for almost all tilings $t$ of $R$
with $\Flux(t) = 0$ and $\TW(t,t_0) = 0$
there exists a sequence of flips joining $t_0$ and $t$.

Let $t_4$ and $t_5$ be as in Figure \ref{fig:666f1}.
Again from Example \ref{example:modulus},
the modulus for these tilings is $m = 2$.
We have $\TW(t_5,t_4) = +2 = 0 \in \ZZ/2\ZZ$.
Consistent with Proposition \ref{prop:fliptwist},
a computation shows that $t_4$ and $t_5$
are connected by a finite sequence of flips.
Again, experiments indicate that for almost all tilings $t$ of $R$
with $\Flux(t) = \Flux(t_4)$ and $\TW(t,t_4) = 0 \in \ZZ/2\ZZ$
there exists a sequence of flips joining $t_4$ and $t$.
\end{example}

Let $\gamma_1$ and $\gamma_2$ be disjoint systems
of smooth cycles in an oriented $3$-manifold $R$.
If $[\gamma_1] = [\gamma_2] = 0 \in H_1(R)$, then
there  exist Seifert surfaces $S_1$ and $S_2$
for $\gamma_1$ and $\gamma_2$.  Classically, the \emph{linking number}
$\Link(\gamma_1;\gamma_2) = \Link(\gamma_2;\gamma_1) \in \ZZ$
of $\gamma_1$ and $\gamma_2$
is defined as the number of intersections
(with sign) between $\gamma_1$ and $S_2$
(or $\gamma_2$ and $S_1$).  Furthermore, the linking number is 
independent of the choice of $S_1$ and $S_2$. 
For our more general spaces,
the linking number must be considered
with respect to the modulus $m$ of the tiling.  
Then, the linking number quantifies the difference in twist.  
Namely, suppose $R$ is a cubiculated region and
$t_0$, $t_1$, $t_2$ and $t_3$ are tilings of $R$ with equal Flux. 
If the systems of cycles
$\gamma_1 = t_1 - t_0 = t_3 - t_2$ and
$\gamma_2 = t_2 - t_0 = t_3 - t_1$ are disjoint then 
$$\TW(t_3;t_2) - \TW(t_1;t_0) =
\TW(t_3;t_1) - \TW(t_2;t_0) = 2\Link(\gamma_1;\gamma_2)$$
 and $ 
\TW(t_3;t_0) = \TW(t_3;t_2) + \TW(t_2;t_0) 
= \TW(t_3;t_1) + \TW(t_1;t_0).$
More generally, if $t_2 - t_1$ and $t_1 - t_0$ are not disjoint,
then refine and slightly move these
systems of cycles using flips (and Proposition~\ref{prop:fliptwist})
to obtain disjoint cycles.
Together this gives the following. 

\begin{prop}
\label{prop:generaladditivetwist}
Let $R$ be a cubiculated region.
Let $t_0$, $t_1$ and $t_2$ be tilings of $R$ with
$\Flux(t_0) = \Flux(t_1) = \Flux(t_2)$.
Then $\TW(t_2;t_0) = \TW(t_2;t_1) + \TW(t_1;t_0)$.
\end{prop}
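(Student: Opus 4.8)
The plan is to deduce additivity in the general case from the special
``disjoint cycles'' case already discussed, using refinement and small
flip-perturbations to reduce to it. First I would record the setup:
write $\gamma_1 = t_1 - t_0$ and $\gamma_2 = t_2 - t_1$ for the two
systems of cycles, and note that $t_2 - t_0$ is homologous to
$\gamma_1 + \gamma_2$, so all the relevant fluxes around curves and the
modulus $m = \mu(\Flux(t_0))$ are well defined and agree across the three
tilings (using Lemma~\ref{lemma:phi} and the fact that $\Flux$ is
constant along flip and trit moves). The target identity
$\TW(t_2;t_0) = \TW(t_2;t_1) + \TW(t_1;t_0)$ lives in $\ZZ/m\ZZ$.

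The main step is the reduction to disjoint systems of cycles. I would
first refine all three tilings (legitimate by Lemma~\ref{lem:refineflux},
which preserves $\Flux$, hence preserves $m$, and by the fact that
refinement does not change $\TW$ since Seifert surfaces refine to Seifert
surfaces and $\phi$ is refinement-invariant by
Lemma~\ref{lemma:h2}). After enough refinements there is room to apply
Proposition~\ref{prop:fliptwist} repeatedly: a sequence of flips carries
$t_1$ to a tiling $\tilde t_1$ with $\tilde t_1 - t_1$ a small system of
cycles supported near $\gamma_1 \cup \gamma_2$, chosen so that the new
cycle systems $\gamma_1' = \tilde t_1 - t_0$ and $\gamma_2' = t_2 - \tilde t_1$
become disjoint; and by Proposition~\ref{prop:fliptwist} we have
$\TW(\tilde t_1;t_0) = \TW(t_1;t_0)$ and $\TW(t_2;\tilde t_1) = \TW(t_2;t_1)$,
so replacing $t_1$ by $\tilde t_1$ changes nothing on either side of the
desired equation. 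With $\gamma_1'$ and $\gamma_2'$ disjoint, the
displayed identities preceding
Proposition~\ref{prop:generaladditivetwist} (the ones phrased via the
linking number, applied with $t_3$ a tiling realizing
$t_3 - t_0 = \gamma_1' + \gamma_2'$) give directly
$\TW(t_2;t_0) = \TW(t_2;\tilde t_1) + \TW(\tilde t_1;t_0)$ in $\ZZ/m\ZZ$,
and then substituting back finishes the proof.

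The hard part is the perturbation step: making precise that a finite
sequence of flips, each covered by Proposition~\ref{prop:fliptwist}, can
always separate the two cycle systems after sufficient refinement. Here
the key point is local — wherever $\gamma_1$ and $\gamma_2$ share a
vertex or run along a common edge of $R^\ast$, a refinement creates a
$5\times 5\times 5$ block in which one of the two strands can be pushed
aside by flips into a parallel track without meeting the other; one must
check that such flips exist (they do, since in a refined block the
relevant dominoes form a flippable $2\times 2\times 1$ slab) and that
finitely many of them suffice because $\gamma_1$ and $\gamma_2$ meet in
finitely many places. A subtlety worth flagging is that the three values
$\TW(t_2;t_0)$, $\TW(t_2;t_1)$, $\TW(t_1;t_0)$ must all be interpreted in
the same group $\ZZ/m\ZZ$; this is exactly what Lemma~\ref{lemma:phi}
guarantees, since $m$ depends only on the common value of the Flux. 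Once
the separation is achieved, everything else is bookkeeping with the
already-established linking-number formula.
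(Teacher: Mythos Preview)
Your proposal is correct and follows essentially the same route as the paper: the paper's argument (the paragraph immediately preceding the proposition) also reduces the general case to the disjoint-cycles case by refining and then using flips, justified by Proposition~\ref{prop:fliptwist}, so that the linking-number identities for four tilings can be invoked. Your write-up is considerably more detailed than the paper's terse sketch---in particular your discussion of the local separation step and the bookkeeping for $m$---but the strategy is the same.
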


We are now ready to define the twist of a tiling.

\begin{definition}
\label{def:twist}
Let $R$ be a cubiculated region. 
For any possible value $\Phi$ of the Flux of a tiling, 
choose a base tiling $t_{\Phi}$.
For a tiling  $t$  of a region $R$ with $\Flux(t) = \Phi$
define 
\begin{equation*}
\label{eq:twist}
\Tw(t) := \phi(t;t-t_{\Phi}) = \phi(t_{\Phi};t-t_{\Phi}) \in \ZZ/m\ZZ.
\end{equation*}
\end{definition}

\begin{example}
\label{example:Twphi}
Let $R$ be the $4\times 4\times 4$ box;
let $t_0$ and $t_1$ be as in Example \ref{example:phi};
let $t_{\basetiling}$ be the tiling with vertical dominoes.
We have $\Tw(t_0) = 1$ and $\Tw(t_1) = 0$,
consistently with Example \ref{example:TWphi}.
\end{example}

\begin{example}
\label{example:twist666f0}
Let $t_0$, $t_1$, $t_2$ and $t_3$ be the tilings
of the $6\times 6\times 6$ torus
shown in Figure \ref{fig:666f0}.
Take $t_{\basetiling} = t_0$ so that
$\Phi = \Flux(t_0) = \Flux(t_1) = \Flux(t_2) = \Flux(t_3) = 0$.
Take $t_{\Phi} = t_0$: we then have
$\Tw(t_0) = \Tw(t_3) = 0$, $\Tw(t_1) = +1$ and $\Tw(t_2) = -1$.
\end{example}

\begin{coro}
\label{coro:trittwist}
If $t_1$ is obtained from $t_0$ by a positive trit then
$\Flux(t_1) = \Flux(t_0)$ and $\Tw(t_1) = \Tw(t_0) + 1$.
\end{coro}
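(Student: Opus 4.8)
The plan is to reduce the statement to a single explicit local computation. First, the Flux claim is immediate: a trit changes a tiling by the boundary of a sum of three squares, hence $[t_1 - t_0] = 0 \in H_1(R^\ast;\ZZ)$, so $\Flux(t_1) = \Flux(t_0)$; moreover $\mu(\Flux(t_1)) = \mu(\Flux(t_0))$ so the relevant modulus $m$ is the same for both tilings and the equation $\Tw(t_1) = \Tw(t_0) + 1$ makes sense in $\ZZ/m\ZZ$. By Proposition~\ref{prop:generaladditivetwist} and Definition~\ref{def:twist}, it suffices to prove that $\TW(t_1;t_0) = 1 \in \ZZ/m\ZZ$, i.e. that $\phi(t_1; t_1 - t_0) = 1$, since $\Tw(t_1) - \Tw(t_0) = \TW(t_1;t_1') + \TW(t_1';t_0) - (\cdots)$ telescopes to $\TW(t_1;t_0)$ via additivity.

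Next I would localize. Since $t_1$ is obtained from $t_0$ by a positive trit, the two tilings agree outside a single $2\times 2\times 2$ cube, and $t_1 - t_0$ is the system of cycles supported on that cube; after a rotation we may assume the configuration is exactly the one depicted in Figure~\ref{fig:surfaceTritExample}. As observed in the Example following the definition of Seifert surface, at least one of the two interior points needed there is a genuine cube-center of $R$ (because $\partial R$ is a manifold), so one of the two three-square surfaces $S$ in Figure~\ref{fig:surfaceTritExample} is a legitimate embedded discrete Seifert surface for $(t_0,t_1)$ — possibly only after one refinement, which changes nothing since Flux, twist and $\phi$ are all refinement-invariant (Lemma~\ref{lem:refineflux} and the remarks in Section~\ref{sec:fluxthroughsurfaces}). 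Using this surface, $\TW(t_1;t_0) = \phi(t_1;S)$ is computed directly: enumerate the (at most four) interior vertices of $S$, for each one record the color and whether the $t_1$-dimer at that vertex points above, below, or lies in $S$, and sum the contributions $\varphi(v;t_1;S)$. This is the only real calculation, and one checks the answer is $+1$ (and, by symmetry of the picture, that the other admissible surface gives the same value, consistently with Lemma~\ref{lemma:phi}).

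The main obstacle is really just bookkeeping in that local computation: one must be careful that the sign conventions line up — the orientation of $S$ inducing the transversal normal, the convention that dimers point white-to-black, and the sign $\ccolor(v) = \pm 1$ — and that the count is genuinely over the interior vertices of $S$ only, excluding the vertices on $\partial S$ (which lie on the cycles $t_1-t_0$ themselves). I would also remark that the choice of which of the two Seifert surfaces is available depends on the ambient region, but the value $\phi(t_1;S)$ does not, so the "$+1$" is intrinsic to the positive trit; a negative trit is the reverse move and hence decreases the twist by $1$. Finally, since $m \mid$ (all values of $\phi(t;a)$) and $1 \not\equiv 0$ whenever $m \neq 1$, the statement $\Tw(t_1) = \Tw(t_0)+1$ is non-vacuous exactly when $m \neq 1$, and is the equation in $\ZZ/m\ZZ$ in general; when $\Flux = 0$ we have $m = 0$ and the equality holds in $\ZZ$.
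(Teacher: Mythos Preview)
Your approach is correct and is exactly the (implicit) argument the paper has in mind: the corollary is stated without proof because it follows immediately from Proposition~\ref{prop:generaladditivetwist} (reducing to $\TW(t_1;t_0)$) together with the Example surrounding Figure~\ref{fig:surfaceTritExample}, which hands you an explicit three-square Seifert surface for the trit on which $\phi$ is computed directly. One small correction: that surface has exactly \emph{one} interior vertex (the corner of the $2\times 2\times 2$ cube where the three squares meet; the other seven vertices lie on the hexagonal boundary), not ``at most four'', and the dimer there necessarily exits the cube, so the computation of $\varphi(v;t_1;S)=+1$ is a single sign check rather than a sum; also, no refinement is needed here, and the relevant reference for the two surfaces agreeing is Lemma~\ref{lemma:homology} rather than Lemma~\ref{lemma:phi}.
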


\section{Height Functions}
\label{sec:coquadriculated}

In this section we consider discrete Seifert surfaces 
together with the restriction of $\cG(R^\ast)$ to the surface.  
We prove that, under suitable hypothesis,
two tilings of such a surface are connected by flips.
The proof relies on a development of \emph{height functions} 
appropriate to our setting.
Height functions are a standard tool in the study of domino tilings,
see e.g.~\cite{thurston1990},  \cite{conway1990tiling}, \cite{davidtomei}.

If $S$ is a discrete Seifert surface in $R$,
a tiling of $R$ which is tangent to $S$
restricts to a tiling of $S$, i.e.,
a matching of $\cG^\ast$, the restriction of $\cG(R^\ast)$ to $S$.
Let $\cT = \cT(S,\cG^\ast)$ be the set of all tilings of $S$.

Just as for $3$-dimensional dominoes, if $t_0, t_1 \in \cT$,
the difference $t_1 - t_0$ can be seen as a system of cycles in $S$.
We interpret $[t_1 - t_0]$ to be an element of $H_1(S)$ and,
given a choice of a base tiling $t_\basetiling \in \cT$,
set $\Flux_S(t) = [t - t_\basetiling] \in H_1(S)$.
Given $a \in H_1$, let $\cT_a$
be the equivalence class of tilings of $S$ with the same Flux $a$.
A tiling $t \in \cT$ is \emph{stable}
if every edge of $\cG^\ast$ belongs to some tiling 
in the equivalence class $\cT_{\Flux_S(t)}$; in this case,
we also call the set $\cT_{\Flux_S(t)}$ \emph{stable}.

\begin{example}
\label{example:stable}
Consider the surface $S$ in Figure \ref{fig:stable1}.
The surface is shown both as a set of cubes in $R$
and as a surface in $R^\ast$.
We also show a related graph to be constructed below.

\begin{figure}[ht]
\centerline{\includegraphics[scale=0.25]{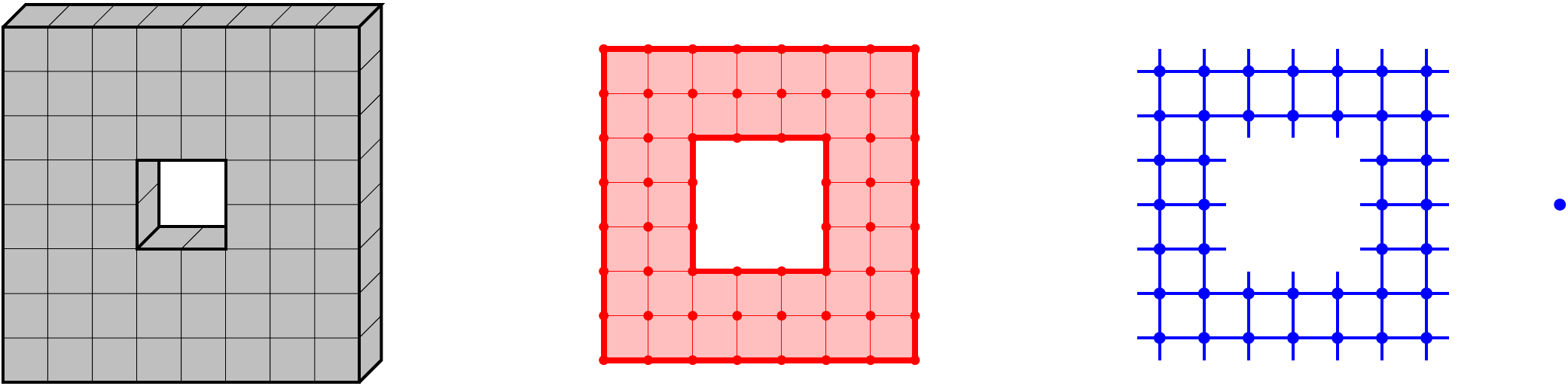}}
\caption{A surface shown in $3$ different ways.
The first one is as a set of cubes in $R$.
The second one is as a surface in $R^\ast$.
The third one is the graph with vertices in $V$.}
\label{fig:stable1}
\end{figure}

Figure \ref{fig:stable2} shows three tilings $t_1$, $t_2$ and $t_3$
of the surface $S$.
The two tilings $t_1$ and $t_2$ have the same flux and
there are $556515$ tilings in the class $\cT_{\Flux_S(t_1)}$.
It is not hard to verify that this set is stable
(every domino in $S$ belongs to a tiling in this class).
The tiling $t_3$ is the only one in its class,
which is thus unstable.
\end{example}

\begin{figure}[ht]
\centerline{\includegraphics[scale=0.25]{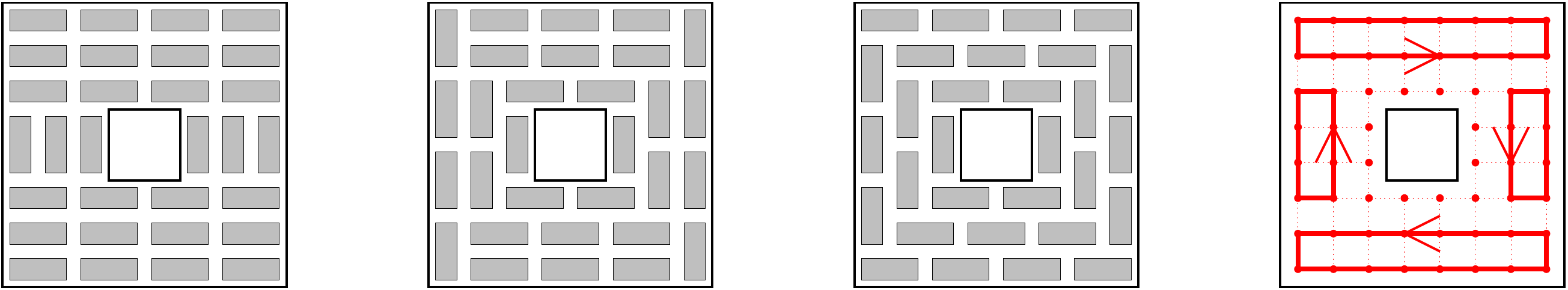}}
\caption{Three tilings of the surface $S$ shown in Figute \ref{fig:stable1}.
The difference between the first two tilings is also shown.}
\label{fig:stable2}
\end{figure}

When taking refinements, we may use $t_\basetiling'$
as a base tiling of refined $S$ 
(after a few flips to make sure the tiling is tangent to $S$).
It is then easy to see that $\Flux_S(t') = \Flux_S(t)$
(we will blur the distinction between a surface $S$ and its refinement $S'$).
Furthermore, sufficient refinement makes any class stable:
the condition of existence of a suitable tiling becomes easy after refinement.

Let $a = \Flux_S(t_0) = \Flux_S(t_1)$ so that $t_0, t_1 \in \cT_a$.
Let $V$ be the set of components of $S \smallsetminus \cG^\ast$
(squares in $R^\ast$)
with one extra object called $\infty$ corresponding to $\partial S$.
For each oriented edge $e$ of $\cG^\ast$, there is an element $e_l \in V$
to its left and an element $e_r \in V$ to its right;
if the edge is contained in $\partial S$ then one of these is $\infty$.
Two elements $v_0, v_1 \in V$ are \emph{neighbors}
if there exists an oriented edge $e$ with $v_0 = e_l$ and $v_1 = e_r$.
We thus obtain a graph, as shown in Figure \ref{fig:stable1};
the vertex at infinity is shown separately
and the edges from it are not shown.

Let $C_2$ (relative to $R^\ast$)
be the $\ZZ$-module of functions $w: V \to \ZZ$.
Let $C_1$ be the $\ZZ$-module spanned by oriented edges of $\cG^\ast$.
Define the boundary map $\partial: C_2 \to C_1$ as follows:
given $w \in C_2$ and $e$ an oriented edge of $\cG^\ast$,
the coefficient of $e$ in $\partial w$ is $w(e_l) - w(e_r)$.
Let $B_1 \subseteq C_1$ be the image of $\partial: C_2 \to C_1$:
given $g \in B_1$, there is a unique element $w = \wind(g) \in C_2$
with $w(\infty) = 0$ and $\partial w = g$.
We call $\wind(g)$ the \emph{winding} of $g$.

\begin{example}
\label{example:winding}
For the surface $S$ shown in Figure \ref{fig:stable1},
the set $V$ has $41$ elements:
the $40$ squares in $R^\ast$ and the vertex $\infty$ (shown separately).

\begin{figure}[ht]
\centerline{\includegraphics[scale=0.25]{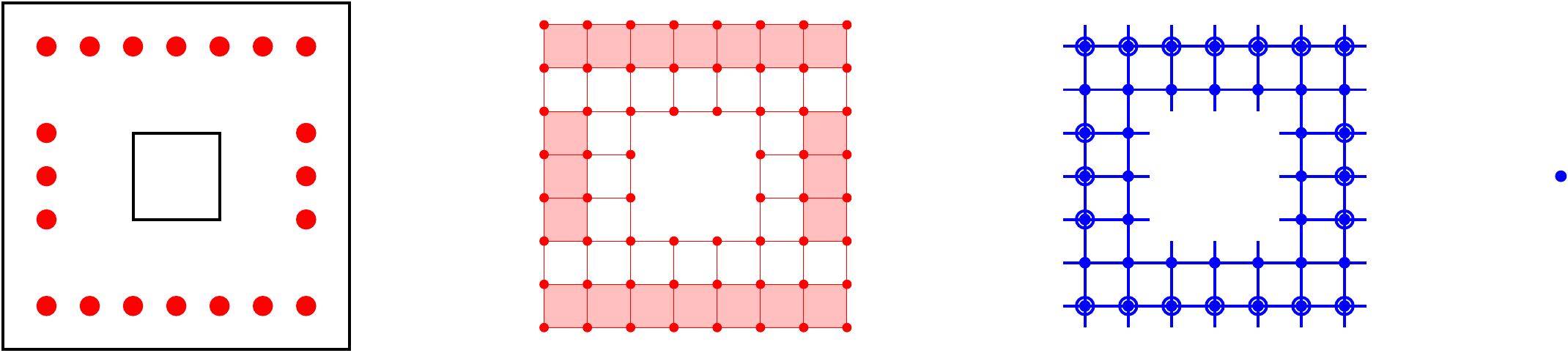}}
\caption{The winding $\wind(t_2-t_1)$.}
\label{fig:winding}
\end{figure}

Let $t_1$ and $t_2$ be the first two tilings in Figure \ref{fig:stable2}.
Their difference $g = t_2 - t_1 \in B_1$ is shown
in the fourth figure of Figure \ref{fig:stable2}.
The winding $\wind(g)$ is shown in three different ways
in Figure \ref{fig:winding}:
it has value $1$ in the indicated squares
and $0$ elsewhere (including in $\infty$).
\end{example}

Given a tiling $t \in \cT_a$, we proceed to contruct
the \textit{height function} $h_t: V \to \RR$
in Equation \ref{equation:heightfunction}.
In order to do that, we use the winding functions
$\wind(t-\tilde t): V \to \ZZ$ for each $\tilde t \in \cT_a$.
Notice that given two tilings $t, \tilde t \in \cT_a$,
we have $g = t - \tilde t \in B_1$.
The function $w = \wind(g): V \to \ZZ$
satisfies $w(\infty) = 0$.
Furthermore, given a black-to-white edge $e$,
we have $w(e_l) - w(e_r) = [e \in t_1] - [e \in t_0]$
(we use Iverson notation:
$[e \in t]$ equals $1$ if $e \in t$ and $0$ otherwise).
In particular, if $v_0, v_1 \in V$ are neighbors then
$|w(v_0) - w(v_1)| \le 1$.
We are now ready to construct the \emph{height function}
$h_t: V \to \RR$:
\begin{equation}
\label{equation:heightfunction}
h_t = \frac{1}{|\cT_a|} \sum_{\tilde t \in \cT_a} \wind(t - \tilde t). 
\end{equation}
Thus, $h_t$ is the average of the windings $\wind(t-\tilde t)$
for $\tilde t \in \cT_a$.
Notice that for any $t \in \cT_a$ the height function $h_t$ satisfies:
\begin{enumerate}
\item[(a)]{$h_t(\infty) = 0$;}
\item[(b)]{for any $\tilde t \in \cT_a$ and any $v \in V$ we have
$h_{t}(v) \equiv h_{\tilde t}(v) \pmod \ZZ$;}
\item[(c)]{if $v_0, v_1 \in V$ are neighbors then $|h_t(v_0) - h_t(v_1)| < 1$.}
\end{enumerate}
Condition (a) follows from the equivalent equation for $\wind(t_1-t_0)$.
Condition (b) follows from $h_{t} - h_{\tilde t} = \wind(t-\tilde t)$.
Finally, the strict inequality in (c) follows from the hypothesis
of stability of $\cT_a$.

\begin{example}
\label{example:height}
This definition of height function is good
for theoretical use but computationally is not very practical.
Moreover, the values are complicated rational numbers
(unlike the situation in \cite{thurston1990}).
Consider the class $\cT_{\Flux_S(t_1)}$ in Example \ref{example:stable}.
In vertex $v = (1,1)$ (the nontrivial vertex furthest to the top left)
the height function assumes the values $h_{t_1}(v) = -224401/556515$
and $h_{t_2}(v) = 332114/556515$.
These are the two possible values of $h_t(v)$ for $t \in \cT_{\Flux_S(t_1)}$:
notice that they differ by $1$.
For $t \in \cT_{\Flux_S(t_1)}$,
the value of $h_t(v)$ is determined
by the position of the top left domino in the tiling $t$.
\end{example}

Conversely, any function $h: V \to \RR$ satisfying conditions
(a), (b) and (c) above is the height function of a (unique) tiling
$t \in \cT_a$.
Recall that tilings $t \in \cT_a$ can be interpreted as
elements of $C_1$:
for a positively oriented edge $e$,
the coefficient of $e$ in $t$ is $[e \in t]$.
Given $h$ and a positively oriented edge $e$,
we have $e \in t$ if and only if $h(e_l) > h(e_r)$.
Conditions (b) and (c) guarantee
that for any vertex in $\cG^\ast$ exactly one edge adjacent to it
receives the coefficient $1$.

We give another description of the method to obtain
a tiling $t$ given its height function $h$.
Take a base tiling $t_0 \in \cT_a$:
we may also consider $t_0 \in C_1$.
Set $w = h - h_{t_0}$ (which is integer valued)
and $t = t_0 + \partial w \in C_1$.

As for tilings of planar regions,
it follows from this characterization of height functions that
the maximum or minimum of two height functions
is a height function.
For $t, \tilde t \in \cT_a$, write $t \le \tilde t$
if $h_{t}(v) \le h_{\tilde t}(v)$ for all $v \in V$.
Also, $t < \tilde t$ if $t \le \tilde t$ and $t \ne \tilde t$.

Two tilings $t, \tilde t \in \cT_a$ differ by a flip
at $v \in V \smallsetminus \{\infty\}$ if
$h_t(v) - h_{\tilde t}(v) = \pm 1$ and
$h_t(\tilde v) = h_{\tilde t}(\tilde v)$
for $\tilde v \in V \smallsetminus \{v\}$.
Conversely, a flip at $v \in V \smallsetminus \{\infty\}$
is allowed from $t \in \cT_a$
if and only if $v$ is a local maximum or minimum of $h_t$.

\begin{theo}
\label{theo:coquadriculated}
Consider a pair $(S,\cG^\ast)$
and two stable tilings $t_0$, $t_1$ of $S$
with $\Flux_S(t_0) = \Flux_S(t_1)$.
Then $t_0$ and $t_1$ are connected by flips.
\end{theo}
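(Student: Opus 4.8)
The plan is to mimic the classical height-function argument for connectivity of two-dimensional domino tilings, but adapted to the surface-with-boundary setting that the height forms of this section provide. The key structural fact already established is the bijection between tilings $t \in \cT_a$ and functions $h\colon V \to \RR$ satisfying conditions (a), (b), (c), together with the lattice structure: if $h_t$ and $h_{\tilde t}$ are height functions then $\max(h_t,h_{\tilde t})$ and $\min(h_t,h_{\tilde t})$ are again height functions, hence correspond to tilings $t \vee \tilde t$ and $t \wedge \tilde t$ in $\cT_a$. The partial order $t \le \tilde t$ and the characterization of allowed flips (a flip at $v$ is allowed from $t$ iff $v$ is a local extremum of $h_t$) are exactly the ingredients needed.

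First I would show that from any tiling $t \in \cT_a$ one can reach the unique maximal element $t_{\max}$ (whose height function is $\bigvee_{\tilde t \in \cT_a} h_{\tilde t}$, which is a height function since $V$ is finite) by a sequence of flips that only increase the height function. The standard argument: if $t \ne t_{\max}$, consider the set $U = \{v : h_t(v) < h_{t_{\max}}(v)\}$, which is nonempty and does not contain $\infty$. Pick $v \in U$ maximizing $h_t(v)$; one checks using condition (c) and the fact that neighbors of $v$ outside $U$ must have $h_t$-value equal to their $h_{t_{\max}}$-value, together with integrality (b), that $v$ is a local maximum of $h_t$, so a flip at $v$ is allowed, and performing it strictly increases $h_t(v)$ by the minimal amount while keeping $h_t \le h_{t_{\max}}$. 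Since $\sum_v (h_{t_{\max}}(v) - h_t(v))$ is a nonnegative quantity in a fixed discrete coset that strictly decreases at each such flip, the process terminates at $t_{\max}$. The symmetric argument connects $t$ to the minimal element $t_{\min}$.

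Then the theorem follows immediately: $t_0$ and $t_1$ both lie in $\cT_a$ (since $\Flux_S(t_0) = \Flux_S(t_1)$), so each is connected by flips to $t_{\max}$, hence to each other.

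The main obstacle, I expect, is verifying carefully that the chosen vertex $v$ is genuinely a local extremum so that the flip is \emph{allowed} in the sense defined just before the theorem --- this is where one must use the stability hypothesis (which gives the \emph{strict} inequality in (c)), the integrality congruence (b), and the defining property $h_t(e_l) - h_t(e_r) = [e \in t] - [e \in \tilde t]$ for black-to-white edges. One must rule out the degenerate possibility that $v$ sits at a "saddle" where a flip would not be permitted; the strictness in (c) combined with the fact that $h_t - h_{t_{\max}}$ takes integer values is exactly what forces all four (or fewer, near $\partial S$) neighbors of $v$ to lie at the correct relative heights. A secondary point requiring care is the boundary object $\infty$: one must check that $\infty$ never obstructs the argument, which follows because $h_t(\infty) = 0 = h_{t_{\max}}(\infty)$ always, so $\infty \notin U$ and no flip is ever attempted at $\infty$.
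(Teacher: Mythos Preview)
Your overall strategy---height-function lattice on $\cT_a$, locate a local extremum, flip, iterate---is exactly the paper's; routing through $t_{\max}$ rather than flipping $t_1$ down to $t_0$ (as the paper does, after reducing via the lattice to the case $t_0 < t_1$) is an inessential variant. However, your specific choice of $v$ does not work, and the max/min directions are tangled. You pick $v \in U = \{v : h_t(v) < h_{t_{\max}}(v)\}$ \emph{maximizing} $h_t(v)$, claim it is a local \emph{maximum}, and then say the flip there \emph{increases} $h_t(v)$. But a flip at a local maximum can only lower the height; to raise it you need a local minimum. Worse, your $v$ need not be either: for a neighbor $\tilde v \notin U$ one has $h_t(\tilde v) = h_{t_{\max}}(\tilde v) > h_{t_{\max}}(v) - 1 \ge h_t(v)$, so such a $\tilde v$ sits strictly \emph{above} $v$, whereas a neighbor $\tilde v \in U$ sits weakly below by your maximality. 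When both kinds of neighbor occur, $v$ is a saddle and no flip is allowed there.

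The fix---and this is precisely the paper's argument, dualized to flip up rather than down---is a two-step selection. First restrict to the set $V_1 \subseteq U$ on which the integer $h_{t_{\max}} - h_t$ is \emph{maximal}; only then take $v \in V_1$ with $h_t(v)$ minimal. For a neighbor $\tilde v \notin V_1$ the drop of at least $1$ in the difference, combined with condition (c) applied to $h_{t_{\max}}$, forces $h_t(\tilde v) > h_t(v)$ strictly; for $\tilde v \in V_1$ minimality gives $h_t(\tilde v) \ge h_t(v)$. Thus $v$ is a local minimum, the upward flip is allowed, and the new height still lies below $h_{t_{\max}}$. The preliminary restriction to $V_1$ is exactly what rules out the saddle obstruction you correctly anticipated as the main obstacle.
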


\begin{proof}
Assume $t_0 < t_1$.
We show that we can perform a flip on $t_1$
in order to obtain $\tilde t$ with $t_0 \le \tilde t < t_1$;
by induction, this completes the proof.
Let $V_1 \subset V$ be the set of $v \in V$ for which
$h_{t_1}(v) - h_{t_0}(v)$ is maximal.
Let $v_2$ be the point of $V_1$ where $h_{t_1}$ is maximal.
We claim that $v_2$ is a local maximum of $h_{t_1}$.
Indeed, let $\tilde v$ be a neighbor of $v_2$.
If $\tilde v \in V_1$ then $h_{t_1}(\tilde v) < h_{t_1}(v_2)$
by definition of $v_2$.
If $\tilde v \notin V_1$ then
$h_{t_0}(v_2) < h_{t_0}(\tilde v) \le h_{t_1}(\tilde v) < h_{t_1}(v_2)$
by definition of $V_1$, completing the proof of the claim.
Let $h_{\tilde t}: V \to \RR$ be defined by
\[ h_{\tilde t}(v) = \begin{cases} h_{t_1}(v), & v \ne v_2; \\
h_{t_1}(v_2) - 1, & v = v_2. \end{cases} \]
The function $h_{\tilde t}$ satisfies conditions (a), (b) and (c)
and therefore defines a valid tiling $\tilde t \in \cT_a$
with the required properties.
\end{proof}

\begin{remark}
The results of this section, including Theorem~\ref{theo:coquadriculated},
hold more generally for any \emph{coquadriculated surface},
a pair $(S,\cG^\ast)$ consisting of:
an oriented compact connected topological surface $S$
with non-empty boundary $\partial S$ and
an embedded connected bipartite graph  $\cG^\ast \subset S$
such that every connected component
of $S \smallsetminus \cG^\ast$ is a square,
that is, is surrounded by a cycle of length $4$ in $\cG^\ast$. 
Namely, it is not necessary to have an ambient manifold
to induce the surface being tiled.  
\end{remark}

\section{Connectivity}
\label{sec:proof}

In this section we prove our main theorem.  
Let $R$ be a cubiculated region.
Let $t_0$, $t_1$ be tilings of $R$ with $\Flux(t_0) = \Flux(t_1)$.
A discrete Seifert surface $(S,\psi)$
for the pair $(t_0,t_1)$ is:
\begin{itemize}
\item{ \emph{balanced} if the number of black vertices  equals the number of white vertices in the interior
of $S$;}
\item{ \emph{zero-flux} if $\phi(t_0;S) = \phi(t_1;S) = 0$; }
\item{ \emph{tangent} if $t_0$ and $t_1$ are tangent to $S$
(including in the interior of the surface).  }
\end{itemize}
Notice that a tangent surface is clearly both balanced and zero-flux.
The converse implications in general do not hold.

Recall from Lemma \ref{lemma:existseifert} that if $\Flux(t_0) = \Flux(t_1)$
then there exists a refinement $R^{(k)}$
and a discrete Seifert surface for the pair $(t_0,t_1)$ in this refinement.
We are now ready to prove our main theorem by considering when nicer Seifert surfaces can be constructed.

\begin{proof}[Proof of Theorem \ref{theo:main}]
We in fact prove a series of results.
Each item below could be considered a distinct lemma,
but since they build naturally to the main result, 
 we prefer to present this as a single proof.
   
Consider a cubiculated region $R$
and two distinct tilings $t_0$ and $t_1$ of $R$
with $\Flux(t_0) = \Flux(t_1)$.
\begin{enumerate}
\item{If $\Tw(t_0) = \Tw(t_1)$
then there exists a refinement $R^{(k)}$
and a discrete \emph{zero-flux} Seifert surface
for the pair $(t_0,t_1)$ in this refinement.}
\item{If there exists  a discrete zero-flux Seifert surface
for the pair $(t_0,t_1)$ then
there exists a refinement $R^{(k)}$
and a discrete \emph{balanced zero-flux} Seifert surface
for the pair $(t_0,t_1)$ in this refinement.}
\item{If there exists  a discrete balanced zero-flux Seifert surface
for the pair $(t_0,t_1)$ then
there exists a refinement $R^{(k)}$ and
tilings $\tilde t_i$ of $R^{(k)}$ with
$\tilde t_1 - \tilde t_0 = (t_1 - t_0)^{(k)}$,
$\tilde t_i$ obtained from $t_i^{(k)}$ by a sequence of flips,
and a \emph{tangent} Seifert surface
for the pair $(\tilde t_0,\tilde t_1)$.}
\item{If there exists  a discrete tangent Seifert surface
for the pair $(t_0,t_1)$ then
there exists a refinement $R^{(k)}$
for which there exists a sequence of flips taking $t_0^{(k)}$ to $t_1^{(k)}$.}
\end{enumerate}

For item (1), start by constructing a smooth Seifert surface $S$.
Let $m = \mu(\Flux(t_0))$ be the modulus of $t_0$ and $t_1$
(see Definition \ref{definition:modulus}).
By the definition of twist, $\phi(t_0;S) = \phi(t_1;S)$
is a multiple of $m$ and therefore an integer linear combination
of $\phi(t_0;S_i) = \phi(t_1;S_i)$ where the $S_i$ are smooth closed surfaces:
\[ \phi(t_0;S) + \sum c_i \phi(t_0;S_i) = 0. \]
Let $\tilde S$ be the union of $S$ with $c_i$ copies of $S_i$
(reverse orientation if $c_i < 0$).
Perturb the surfaces to guarantee transversality.
Along intersection curves, perform a standard cut-re-glue-smoothen
procedure (see e.g.\cite{Lickorish})
to obtain a smooth embedded Seifert Surface
$\hat S$ with $\phi(t_0;\hat S) = \phi(t_0;\tilde S)$.
As in Lemma \ref{lemma:existseifert},
after refinements, $\hat S$ can be approximated by a discrete surface.

For item (2), notice first that the parity of the number of black 
and white interior vertices  is already the same,
otherwise $\phi(t_0;S)$ would be odd.
In order to increase the black-minus-white difference by two,
look for a white vertex in a planar part of $S$
and lift a surrounding square as in Figure~\ref{fig:lift}.
Repeat the procedure as needed.
\begin{centering}
\begin{figure}[ht]
\centerline{\includegraphics[width=4in]{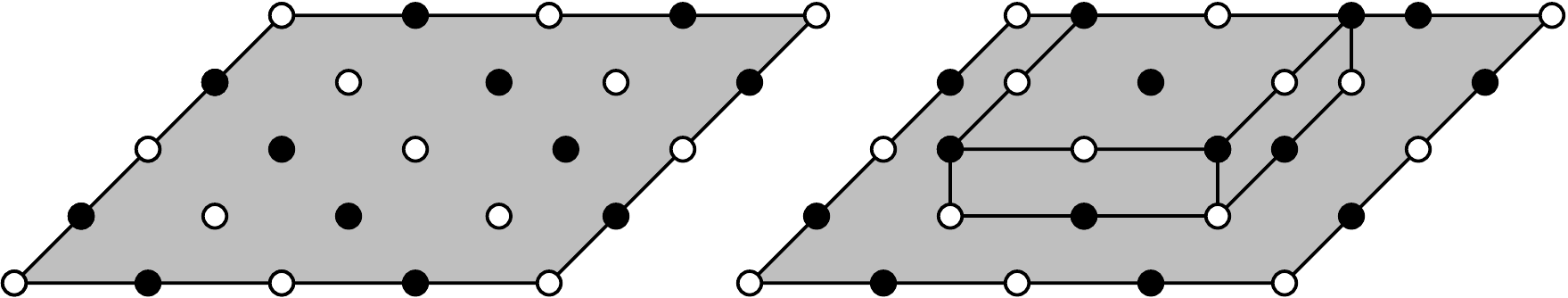}}
\caption{The four center squares on the bottom layer are removed
and the surfaces is lifted. 
This increases the difference between
the number of black versus white vertices;
there is a net increase of $5$ black vertices and $3$ white vertices.}
\label{fig:lift}
\end{figure}
\end{centering}

For item (3), consider vertices in the interior of $S$
which are matched with points outside $S$.
Classify these vertices as black-above, white-above,
black-below and white-below according to their color
and the position of their match.
It follows from $S$ being balanced and zero-flux that
the number of black-above and white-above are equal
(and similarly for black-below and white-below).
Associate vertices black-above and white-above in pairs
(and similarly for black-below and white-below).
We show how to perform flips in order to have such pairs cancel out.
Starting from a black-above vertex, draw a simple curve $\gamma$ on $S$
using tangent dominoes until you arrive at its white-above partner.
Since the surface has been refined there are large regions
of parallel dimers (or dominoes) both on the surface and near $S$. 
We may therefore construct a narrow disk $S_1$ by going ``above'' $\gamma$
in the direction normal to $S$.
As in Figure~\ref{fig:refine}, 
by performing flips and taking refinements,
we may assume $S_1$ to be a surface with an induced tiling by $t$.
By construction,
both the black-above and the white-above dominoes belong
to $\gamma_1 = \partial S_1$.
Again by virtue of refinements
we may assume $t$ to be stable (as a tiling of $S_1$).
Apply Theorem \ref{theo:coquadriculated}
(for $S_1$, not the original $S$)
in order to obtain a sequence of flips
whose effect is to rotate $\gamma_1$,
thus getting rid of both the black-above and the white-above dominoes.

For item (4), we apply Theorem \ref{theo:coquadriculated}
to the surface $S$.
If $t_0$ (and $t_1$) are stable, this can be done directly.
Otherwise, take refinements:
appropriate refinements of $t_0$ (and $t_1$) are stable.
This completes
the proof of item (b) of Theorem \ref{theo:main}.

As to item (a), let $l$ be a positive integer such that
$\Tw(t_1) = \Tw(t_0) \pm l$.
Apply refinements such that $t_0^{(k)}$
and $t_1^{(k)}$ contain at least $l$ boxes
of dimension $3\times 3\times 2$
tiled by $9$ parallel dominoes.
Starting from $t_0^{(k)}$, apply flips and trits 
inside $l$ such boxes in such a manner that
the twist increases or decreases by $l$.
This connects by flips and trits
$t_0^{(k)}$ to a tiling $t$ with $\Tw(t) = \Tw(t_1)$.
By item (2), $t$ and $t_1$ can be connected by flips
(possibly after further refinement).
We thus completed the proofs of both items (a) and (b).
\end{proof}

\section{Final Remarks}

Theorem~\ref{theo:main} is the first positive result concerning
connectivity of three-dimensional domino tilings.  Given the nature of
the result, many natural questions arise.

Our methods rely heavily on refinements
but it is important to point out that they are not simply an artifact
of our proof techniques.  For example, already in the $4 \times 4
\times 4$ box, there are tilings with the same twist which are not
flip connected before refinement.

On the other hand,
allowing for an arbitrary number of refinements appears to be overkill. 
In all known examples, related but far more modest operations are sufficient. 
For instance, in the general case of boxes, it is not known if
refinements are needed: If $t_0$ and $t_1$ are two tilings of a fixed
box, can $t_0$ and $t_1$ be connected by flips and trits?  It is known
that refinements (or related operations)
are necessary for connectivity of tilings of simple
prisms and tori~\cite{primeiroartigo}.
For boxes which are not too narrow,
recent results \cite{saldanha2019}
show that adding a little vertical room is already sufficient.

It is also unknown how often refinement is necessary for certain regions. 
Recent results \cite{saldanha2021}
show that for boxes which are not too narrow,
equal twist almost always implies
connectivity via flips.
But many other cases remain open.
For example, consider the cubical torus given by $\ZZ^3/(N \cdot \ZZ^3)$
for some even integer $N$.
Based on computational experiments,
we conjecture that the probability that two tilings with
the same flux and twist are connected by flips tends to $1$ as $N$
tends to infinity.

More broadly, one asks, what does a `typical' tiling look like?  The
twist essentially partitions the space of tilings into flip connected
components.  For a fixed region, what is the distribution of the
twist?  For example, consider again the case of boxes.  Let the base
tiling (twist $0$) be one in which all tiles are parallel to a
fixed axis. 
In certain special cases, the twist is normally distributed about $0$
(see~\cite{saldanha2021}); is this true in greater generality?

Finally, we mention that the case of higher dimension needs to be explored.
For boxes, twist can be defined in $\ZZ/(2)$ (but not in $\ZZ$).
For certain boxes, the set of tilings has
two large connected components via flips
of approximately the same size
(corresponding to the values $0$ and $1$ of the twist)
and an unknown number of very small components \cite{klivanssaldanha}.

\bibliography{biblio}{}
\bibliographystyle{plain}

\bigskip

{\raggedright
\noindent
Departamento de Matem\'atica, PUC-Rio \\
Rua Marqu\^es de S\~ao Vicente, 225, Rio de Janeiro, RJ 22451-900, Brazil \\
\url{jufreire@gmail.com}\\
\url{pedrohmilet@gmail.com}\\
\url{saldanha@puc-rio.br}\\

\smallskip

\noindent Division of Applied Mathematics and Department of Computer Science\\
Brown University, Providence, RI, USA\\
\url{caroline_klivans@brown.edu}\\
}

\end{document}